\documentclass{article}%
\usepackage{amsmath}
\usepackage{amsfonts}
\usepackage{amssymb}
\usepackage{graphicx}%
\setcounter{MaxMatrixCols}{30}
\providecommand{\U}[1]{\protect \rule{.1in}{.1in}}
\newtheorem{theorem}{Theorem}

\newtheorem{corollary}[theorem]{Corollary}

\newtheorem{example}[theorem]{Example}

\newtheorem{lemma}[theorem]{Lemma}

\newtheorem{proposition}[theorem]{Proposition}

\newenvironment{proof}[1][Proof]{\noindent \textbf{#1.} }{\  \rule{0.5em}{0.5em}}
\begin{document}

\title{Approximate Double Commutants and Distance Formulas}
\author{Don Hadwin\\University of New Hampshire
\and Junhao Shen\\University of New Hampshire}
\maketitle

\begin{abstract}
We extend work of the first author concering relative double commutants and
approximate double commutants of unital subalgebras of unital C*-algebras,
including metric versions involving distance estimates. We prove metric
results for AH subalgebras of von Neumann algebras or AF subalgebras of
primitive C*-algebras. We prove other general results, including some for
nonselfadjoint commutative subalgebras, using C*-algebraic versions of the
Stone-Weierstrass and Bishop-Stone-Weierstrass theorems.

\end{abstract}

\section{Introduction}

We extend results in \cite{H4} on approximate double commutants of
C*-subalgebras of C*-algebras. We also obtain some results for non-selfadjoint
subalgebras. A key ingredient in the proof of the main result in \cite{H4} was
S. Machado's version \cite{M} of the Bishop-Stone-Weierstrass theorem
\cite{B}. In this paper we use Machado's vector version of his theorem
\cite{M}, the factor state version of the Stone-Weierstrass theorem for
C*-algebras of R. Longo \cite{L}, S. Popa \cite{P}, S. Teleman \cite{T}, and
the first author's version of the Bishop-Stone-Weierstrass theorem for
C*-algebras \cite{H3}.

The classical double commutant theorem of von Neumann \cite{vN} is a key
result in the theory of von Neumann algebras. The first author \cite{H1}
proved an asymptotic version of von Neumann's result for unital C*-algebras,
and later \cite{H2} proved a metric version with an analogue of Arveson's
distance formula.

It was shown by R. Kadison \cite{K} that inside a factor von Neumann algebra
von Neumann's double commutant theorem fails, even for commutative
subalgebras. However, the author \cite{H4} showed that, for commutative unital
C*-subalgebras of a factor von Neumann algebra, the asymptotic version holds.

Suppose $\mathcal{S}$ is a subset of a ring $\mathcal{R}$. We define the
\emph{relative commutant} of $\mathcal{S}$ in $\mathcal{R}$, the
\emph{relative double commutant} of $\mathcal{S}$ in $\mathcal{R}$,
respectively, by%
\[
\left(  \mathcal{S},\mathcal{R}\right)  ^{\prime}=\left \{  T\in \mathcal{R}%
:\forall S\in \mathcal{S},TS=ST\right \}  ,
\]%
\[
\left(  \mathcal{S},\mathcal{R}\right)  ^{\prime \prime}=\left \{
T\in \mathcal{R}:\forall A\in \left(  \mathcal{S},\mathcal{R}\right)  ^{\prime
},TA=AT\right \}  .
\]
If $\mathcal{B}$ is a unital C*-algebra and $\mathcal{S}\subseteq \mathcal{B}$,
we define the \emph{relative approximate double commutant of }$\mathcal{S}%
$\emph{ in }$\mathcal{B}$, denoted by Appr$\left(  \mathcal{S},\mathcal{B}%
\right)  ^{\prime \prime}$ as the set of all $T\in \mathcal{B}$ such that
\[
\left \Vert TA_{\lambda}-A_{\lambda}T\right \Vert \rightarrow0
\]
for every bounded net $\left \{  A_{\lambda}\right \}  $ in $\mathcal{B}$ for
which%
\[
\left \Vert SA_{\lambda}-A_{\lambda}S\right \Vert \rightarrow0
\]
for every $S\in \mathcal{S}$. The approximate double commutant theorem
\cite{H1} in $B\left(  H\right)  $ says that if $\mathcal{S=S}^{\ast},$ then
Appr$\left(  \mathcal{S}\right)  ^{\prime \prime}=C^{\ast}\left(
\mathcal{S}\right)  $. Moreover, if we restrict the $\left \{  A_{\lambda
}\right \}  $'s to be nets of unitaries or nets of projections that
asymptotically commute with every element of $\mathcal{S}$, the resulting
approximate double commutant is still $C^{\ast}\left(  \mathcal{S}\right)  $.

It is clear that the \emph{center} $\mathcal{Z}\left(  \mathcal{B}\right)  $
of $\mathcal{B}$ is always contained in Appr$\left(  \mathcal{S}%
,\mathcal{B}\right)  ^{\prime \prime}$ and that Appr$\left(  \mathcal{S}%
,\mathcal{B}\right)  ^{\prime \prime}$ is a norm closed unital algebra. Thus
Appr$\left(  \mathcal{S},\mathcal{B}\right)  ^{\prime \prime}$ always contains
the norm closed unital algebra generated by $\mathcal{S\cup Z}\left(
\mathcal{B}\right)  $. If $\mathcal{S=S}^{\ast}$, then Appr$\left(
\mathcal{S},\mathcal{B}\right)  ^{\prime \prime}$ is a C*-algebra and must
contain $C^{\ast}\left(  \mathcal{S\cup Z}\left(  \mathcal{B}\right)  \right)
$. In \cite{K} R. Kadison calls a subalgebra $\mathcal{A}$ of $\mathcal{B}$
\emph{normal} if $\mathcal{A=}$ $\left(  \mathcal{A},\mathcal{B}\right)
^{\prime \prime}$. We say that $\mathcal{A}$ is \emph{approximately normal} if
$\mathcal{A=}$ $Appr\left(  \mathcal{A},\mathcal{B}\right)  ^{\prime \prime}$.

We say that $\mathcal{A}$ is \emph{metric-normal }in $\mathcal{B}$ if there is
a constant $K<\infty$ such that, for every $T\in \mathcal{B}$,%
\[
dist\left(  T,\mathcal{A}\right)  \leq K\sup \left \{  \left \Vert
TU-UT\right \Vert :U\in \left(  \mathcal{A},\mathcal{B}\right)  ^{\prime}\text{,
}U\text{ unitary}\right \}  .
\]
The smallest such $K$ is the \emph{constant of metric-normality} $K_{n}\left(
\mathcal{A},\mathcal{B}\right)  $ of $\mathcal{A}$ in $\mathcal{B}$. We say
that $\mathcal{A}$ is \emph{approximately metric-normal} in $\mathcal{B}$ if
there is a $K<\infty$ such that, for every $T\in \mathcal{B}$ there is a net
$\left \{  U_{\lambda}\right \}  $ of unitaries in $\mathcal{B}$ such that, for
every $A\in \mathcal{A}$, $\left \Vert AU_{\lambda}-U_{\lambda}A\right \Vert
\rightarrow0$, and such that%
\[
dist\left(  T,\mathcal{A}\right)  \leq K\lim_{\lambda}\left \Vert TU_{\lambda
}-U_{\lambda}T\right \Vert .
\]
The smallest such $K$ is the \emph{constant of approximate metric normality}
$K_{an}\left(  \mathcal{A},\mathcal{B}\right)  $.

Here is a summary of the results in this paper. In Section 2 we discuss a
version of relative injectivity, summarize known results and prove a few new
ones. We relate the forms of injectivity to the metric versions of normalilty
and approximate normality. We also develop a number of useful basic results
about the various versions of normality. We prove (Theorem \ref{AH}) that if
$\mathcal{A}$ is a unital AH C*-subalgebra $\mathcal{A}$ of a von Neumann
algebra $\mathcal{B}$, then $C^{\ast}\left(  \mathcal{A\cup Z}\left(
\mathcal{B}\right)  \right)  $ is metric approximately normal in $\mathcal{B}%
$, and we prove (Theorem \ref{AF}) that every unital AF C*-subalgebra of a
primitive C*-algebra is metric approximately normal.

In Section 3, following ideas of Akemann and Pedersen \cite{AP}, we prove
(Theorem \ref{lift}) that surjective unital $\ast$-homomormisms send the
approximate double commutant of a set into the approximate double commutant of
the image of the set. This result is a key ingredient to our results in
Sections 4 and 5 where we prove general results (Theorem \ref{SWapp} and
Theorem) that involve C*-algebraic versions of the Stone-Weierstrass or
Bishop-Stone-Weierstrass theorems. We conclude in Section 6 with a list of
open problems.

\section{Metric Results}

If $\mathcal{A}$ is a unital C*-subalgebra of a C*-algebra $\mathcal{B}$, then
$\mathcal{F}\left(  \mathcal{A},\mathcal{B}\right)  $ is the convex hull of
the maps $Ad_{U}:\mathcal{B}\rightarrow \mathcal{B}$ defined by $Ad_{u}\left(
T\right)  =U^{\ast}TU$, with a unitary $U\in \left(  \mathcal{A},\mathcal{B}%
\right)  ^{\prime}$. We say that a unital C*-subalgebra $\mathcal{A}$ is
\emph{strongly injective} in a unital C*-algebra $\mathcal{B}$ if there is a
conditional expectation $E:\mathcal{B}\rightarrow \mathcal{A}$, a faithful
unital representation $\pi:\mathcal{B}\rightarrow B\left(  H\right)  $ for
some Hilbert space $H$, and a net $\left \{  \varphi_{\lambda}\right \}  $ in
$\mathcal{F}\left(  \mathcal{A},\mathcal{B}\right)  $ such that, for every
$T\in \mathcal{B}$,%
\[
\pi \left(  \varphi_{\lambda}\left(  T\right)  \right)  \rightarrow \pi \left(
E\left(  T\right)  \right)
\]
in the weak operator topology. It is clear that if $\mathcal{A}$ is strongly
injective in $\mathcal{B}$, then $\mathcal{A}$ contains the center
$\mathcal{Z}\left(  \mathcal{B}\right)  $ of $\mathcal{B}$. If $\mathcal{A}$
and $\mathcal{B}$ are von Neumann algebras, we say that $\mathcal{A}$ is
\emph{weak* injective} in $\mathcal{B}$ if $E$ and the net $\left \{
\varphi_{\lambda}\right \}  $ and be chosen so that, for every $T\in
\mathcal{B}$,%
\[
\varphi_{\lambda}\left(  T\right)  \rightarrow E\left(  T\right)
\]
in the weak*-topology on $\mathcal{B}$, i.e., we can choose $\pi$ to be the
identity representation on $\mathcal{B}$.

\begin{proposition}
\label{inj}Suppose $\mathcal{B}\subseteq B\left(  H\right)  $ is a unital
C*-algebra. Then

\begin{enumerate}
\item If $\pi:\mathcal{B}\rightarrow B\left(  M\right)  $ is a faithful unital
$\ast$-homomorphism for some Hilbert space $M$, and $E:\mathcal{B}%
\rightarrow \mathcal{A}$ is a conditional expectation and there is a net
$\left \{  \psi_{\lambda}\right \}  $ in $\mathcal{F}\left(  \pi \left(
\mathcal{A}\right)  ,\pi \left(  \mathcal{B}\right)  ^{\prime \prime}\right)  $
such that, for every $T\in \mathcal{B}$, $\left \{  \psi_{\lambda}\left(
\pi \left(  T\right)  \right)  \right \}  $ converges in the weak operator
topology to an element $\pi \left(  E\left(  T\right)  \right)  $ of
$\pi \left(  \mathcal{A}\right)  $, then $\mathcal{A}$ is strongly injective in
$\mathcal{B}$.

\item \cite[Theorem C]{GK}If $\mathcal{B}$ is a von Neumann algebra, then
$\mathcal{Z}\left(  \mathcal{B}\right)  $ is weak* injective in $\mathcal{B}$.

\item \cite{R} If $\mathcal{B}$ is a von Neumann algebra and $\mathcal{A}$ is
a normal von Neumann subalgebra of $\mathcal{B}$ such that $\left(
\mathcal{A},\mathcal{B}\right)  ^{\prime}$ is hyperfinite (e.g., $\mathcal{A}$
is a masa in $\mathcal{B}$), then $\mathcal{A}$ is weak* injective in
$\mathcal{B}$.

\item If $\mathcal{B}$ is a primitive C*-algebra, then $\mathcal{Z}\left(
\mathcal{B}\right)  =$ $\mathbb{C}1$ is strongly injective in $\mathcal{B}$.

\item If $\mathcal{A}=\sum_{1\leq j\leq m}^{\oplus}\mathcal{A}_{j}$ is a
unital C*-subalgebra of $\mathcal{B}$ and, for $i=1,\ldots,m$, $P_{1}%
=1\oplus0\oplus \cdots \oplus0$, $P_{2}=0\oplus1\oplus \cdots \oplus0$,$\ldots$,
$P_{m}=0\oplus \cdots \oplus0\oplus1,$ then $\mathcal{A}$ is strongly injective
(resp., normal, approximately normal) in $\mathcal{B}$ if $\mathcal{A}_{i}$ is
strongly injective (resp., normal,approximately normal) in $P_{i}%
\mathcal{B}P_{i}$ for $1\leq i\leq m$.

\item If $\mathcal{A}_{i}$ is strongly injective in $\mathcal{B}_{i}$ for
$i=1,2$, then $\mathcal{A}_{1}\otimes_{\text{\textrm{min}}}\mathcal{A}_{2}$ is
strongly injective in $\mathcal{B}_{1}\otimes_{\text{\textrm{min}}}%
\mathcal{B}_{2}$.

\item If$\mathcal{\ A}$ is strongly injective in $\mathcal{B}$ and
$\mathcal{W}$ is any unital C*-algebra, then $\mathcal{W}\otimes
_{\text{\textrm{min}}}\mathcal{A}$ is strongly injective in $\mathcal{W}%
\otimes_{\text{\textrm{min}}}\mathcal{B}$.

\item if $\mathcal{B}=\mathcal{M}_{k}\left(  \mathcal{D}\right)
=\mathcal{M}_{k}\left(  \mathbb{C}\right)  \otimes \mathcal{D}$ for
$k\in \mathbb{N}$ and $\mathcal{A}=\mathcal{M}_{k}\left(  \mathcal{E}\right)
=\mathcal{M}_{k}\left(  \mathbb{C}\right)  \otimes \mathcal{E}$ and
$\mathcal{E}$ is strongly injective (resp., normal) in $\mathcal{D}$, then
$\mathcal{A}$ is strongly injective (resp., normal) in $\mathcal{B}$.
\end{enumerate}
\end{proposition}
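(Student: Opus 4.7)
Parts 2 and 3 are quoted from \cite{GK} and \cite{R} and require no further argument, so I would focus on parts 1, 4, 5, 6, 7, and 8.

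\textbf{Part 1.} My plan is a Kaplansky-type density argument. Each $\psi_\lambda$ is a convex combination of $Ad_V$ with $V$ a unitary in $(\pi(\mathcal{A}),\pi(\mathcal{B})'')' = \pi(\mathcal{A})' \cap \pi(\mathcal{B})''$, and I want to replace each such $V$ by $\pi(U)$ for a unitary $U \in (\mathcal{A},\mathcal{B})'$, so that the modified net lies in $\pi(\mathcal{F}(\mathcal{A},\mathcal{B}))$. WOT convergence is tested on finitely many vectors at a time, so rather than attempting SOT approximation of $V$ itself (which can fail in general), I would only require $\pi(U)^*\pi(T)\pi(U)$ to agree with $V^*\pi(T)V$ in a finite set of matrix coefficients; a diagonalization then produces the desired net in $\mathcal{F}(\mathcal{A},\mathcal{B})$ whose image under $\pi$ still converges WOT to $\pi(E(T))$. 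The main obstacle is precisely this density: $\pi((\mathcal{A},\mathcal{B})')$ need not be SOT-dense in $\pi(\mathcal{A})' \cap \pi(\mathcal{B})''$, so one has to argue the weaker, $T$-dependent density that suffices for WOT convergence of conjugations.

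\textbf{Part 4.} This will follow from parts 1 and 2. Since $\mathcal{B}$ is primitive, choose a faithful irreducible $\pi:\mathcal{B}\to B(H)$, so $\pi(\mathcal{B})'' = B(H)$ and $\mathcal{Z}(\pi(\mathcal{B})'') = \mathbb{C}I = \pi(\mathbb{C}1)$. Part 2 applied to $B(H)$ provides a conditional expectation $E':B(H)\to\mathbb{C}I$ and a net in $\mathcal{F}(\mathbb{C}I,B(H)) = \mathcal{F}(\pi(\mathbb{C}1),\pi(\mathcal{B})'')$ converging weak-$*$ (hence WOT) to $E'$, and setting $E(T) = E'(\pi(T))$ and invoking part 1 finishes the argument.

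\textbf{Part 5.} I would use the circle unitaries $U_\theta = \sum_j e^{i\theta_j}P_j$, which lie in $(\mathcal{A},\mathcal{B})'$ because the $P_j$ are central in $\mathcal{A}$. Riemann sums of $Ad_{U_\theta}$ over $\mathbb{T}^m$ yield elements of $\mathcal{F}(\mathcal{A},\mathcal{B})$ norm-converging to the block-diagonal projection $T\mapsto\sum_i P_i T P_i$. The strong-injectivity nets for each $(\mathcal{A}_i,P_i\mathcal{B}P_i)$, extended to $\mathcal{B}$ via unitaries of the form $U+(1-P_i)\in(\mathcal{A},\mathcal{B})'$, then assemble to a net converging to $E(T):=\sum_i E_i(P_iTP_i)\in\mathcal{A}$. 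For the normality clause, any $T\in(\mathcal{A},\mathcal{B})''$ commutes with each $P_i$, so $T=\sum_i P_i T P_i$, and any $S\in(\mathcal{A}_i,P_i\mathcal{B}P_i)'$ extends by $0$ to lie in $(\mathcal{A},\mathcal{B})'$, forcing $P_iTP_i\in(\mathcal{A}_i,P_i\mathcal{B}P_i)''=\mathcal{A}_i$; approximate normality is analogous with norm bounds replacing equalities.

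\textbf{Parts 6, 7, 8.} The key observation is that $Ad_{U_1}\otimes Ad_{U_2} = Ad_{U_1\otimes U_2}$ with $U_1\otimes U_2$ a unitary in $(\mathcal{A}_1\otimes_{\min}\mathcal{A}_2,\mathcal{B}_1\otimes_{\min}\mathcal{B}_2)'$, so products of the two averaging nets lie in $\mathcal{F}(\mathcal{A}_1\otimes_{\min}\mathcal{A}_2,\mathcal{B}_1\otimes_{\min}\mathcal{B}_2)$. Taking $E = E_1\otimes E_2$ (a conditional expectation on the min tensor product) and $\pi = \pi_1\otimes\pi_2$ (faithful), WOT convergence verified on elementary tensors extends by uniform boundedness to the whole algebra, proving part 6. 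Part 7 is part 6 with $\mathcal{A}_1=\mathcal{B}_1=\mathcal{W}$ and identity maps, and part 8 is part 7 with $\mathcal{W}=M_k(\mathbb{C})$.
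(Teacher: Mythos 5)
Your parts 4 through 8 track the paper's argument: (4) is deduced from (1) and (2) via a faithful irreducible representation, (6) is the tensor computation with $E_{1}\otimes E_{2}$, $\pi_{1}\otimes\pi_{2}$ and $Ad_{U_{1}}\otimes Ad_{U_{2}}=Ad_{U_{1}\otimes U_{2}}$, and (7), (8) are specializations of (6); your sketch of (5) is more detailed than the paper's (which simply declares it obvious) and is sound. A small remark: reducing (8) to (7) only covers the strong injectivity clause; the ``resp.\ normal'' clause needs the (easy) identification $\left(  \mathcal{M}_{k}\left(  \mathcal{E}\right)  ,\mathcal{M}_{k}\left(  \mathcal{D}\right)  \right)  ^{\prime}=1\otimes\left(  \mathcal{E},\mathcal{D}\right)  ^{\prime}$, though the paper is equally terse on this point.

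The genuine gap is in part 1, and you name it yourself without closing it. Your plan is to replace each unitary $V\in\left(  \pi\left(  \mathcal{A}\right)  ,\pi\left(  \mathcal{B}\right)  ^{\prime\prime}\right)  ^{\prime}$ by unitaries $\pi\left(  U\right)$ with $U\in\left(  \mathcal{A},\mathcal{B}\right)  ^{\prime}$ whose conjugations match finitely many matrix coefficients of $Ad_{V}$, but you give no reason such $U$ exist: the ``weaker, $T$-dependent density'' you invoke is essentially the statement to be proved, so as written part 1 is assumed rather than established. The paper argues differently: given a unitary $U$ in $\pi\left(  \mathcal{B}\right)  ^{\prime\prime}$ it writes $U=e^{iA}$ with $A=A^{\ast}$, uses the Kaplansky density theorem to get a bounded self-adjoint net $\left\{  A_{m}\right\}$ in $\mathcal{B}$ with $\pi\left(  A_{m}\right)  \rightarrow A$ strongly, so that $\pi\left(  e^{iA_{m}}\right)  \rightarrow U$ $\ast$-strongly and $\pi\left(  Ad_{e^{iA_{m}}}\left(  B\right)  \right)  \rightarrow U^{\ast}\pi\left(  B\right)  U$ strongly; hence each $\psi_{\lambda}\circ\pi$, and therefore $\pi\circ E$, lies in the point-weak-operator closure of $\left\{  \pi\circ\varphi:\varphi\in\mathcal{F}\left(  \mathcal{A},\mathcal{B}\right)  \right\}$. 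In the applications the paper actually makes of (1) --- notably (4), where $\mathcal{A}=\mathcal{Z}\left(  \mathcal{B}\right)  =\mathbb{C}1$, so $\left(  \mathcal{A},\mathcal{B}\right)  ^{\prime}=\mathcal{B}$ and, for an irreducible $\pi$, $\pi\left(  \mathcal{A}\right)  ^{\prime}\cap\pi\left(  \mathcal{B}\right)  ^{\prime\prime}=\pi\left(  \mathcal{B}\right)  ^{\prime\prime}$ --- these Kaplansky approximants automatically lie in the relative commutant, so the density you worry about is available there. To complete your part 1 you must either supply an actual argument for the approximation by unitaries of $\left(  \mathcal{A},\mathcal{B}\right)  ^{\prime}$ (at least in the regime where the result is applied), or adopt the exponential/Kaplansky route; leaving it as an acknowledged obstacle is not a proof.
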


\begin{proof}
$\left(  1\right)  .$ Suppose $U\in \pi \left(  \mathcal{B}\right)
^{\prime \prime}$ is unitary. It follows that there is an $A=A^{\ast}\in
\pi \left(  \mathcal{B}\right)  ^{\prime \prime}$ such that $U=e^{iA}$. It
follows from the Kaplansky density theorem that there is a bounded net
$\left \{  A_{m}\right \}  $ in $\mathcal{B}$ such that $\pi \left(
A_{m}\right)  \rightarrow A$ in the strong operator topology, and it follows,
that if $U_{m}=e^{iA_{m}},$ then $\pi \left(  U_{m}\right)  \rightarrow U$ in
the $\ast$-strong operator topology, and thus $\pi \left(  Ad_{U_{m}}\left(
B\right)  \right)  \rightarrow Ad_{U}\left(  \pi \left(  B\right)  \right)  $
in the strong operator topology. It follows that the point-weak-operator
closure of $\left \{  \pi \circ \varphi:\varphi \in \mathcal{F}\left(
\mathcal{A},\mathcal{B}\right)  \right \}  $ contains every $\psi_{\lambda
}\circ \pi$, and thus contains $\pi \circ E$.It follows that $\mathcal{A}$ is
strongly injective in $\mathcal{B}$.

$\left(  4\right)  .$ This follows from $\left(  1\right)  $ and $\left(
2\right)  $.

$\left(  5\right)  $ is obvious.

$\left(  6\right)  .$ Suppose, for $i\in \left \{  1,2\right \}  $, $\pi
_{i}:\mathcal{B}_{i}\rightarrow B\left(  H_{i}\right)  $ is a faithful
representation, $E_{i}:\mathcal{B}_{i}\rightarrow \mathcal{A}_{i}$ is a
conditional expectation and $\left \{  \varphi_{\lambda,i}\right \}  $ is a net
in $\mathcal{F}\left(  \mathcal{A}_{i},\mathcal{B}_{i}\right)  $ such that
\[
\pi_{i}\left(  \varphi_{\lambda,i}\left(  T\right)  \right)  \rightarrow
\pi_{i}\left(  E_{i}\left(  T\right)  \right)
\]
in the weak operator topology. Then $E\left(  T_{1}\otimes T_{2}\right)
=E_{1}\left(  T_{1}\right)  \otimes E_{2}\left(  T_{2}\right)  $ defines a
conditional expectation $E:\mathcal{B}_{1}\otimes_{\text{\textrm{min}}%
}\mathcal{B}_{2}\rightarrow \mathcal{A}_{1}\otimes_{\text{\textrm{min}}%
}\mathcal{A}_{2}$. Also $\pi \left(  T_{1}\otimes T_{2}\right)  =\pi_{1}\left(
T_{1}\right)  \otimes \pi_{2}\left(  T_{2}\right)  \in B\left(  H_{1}\otimes
H_{2}\right)  $ defines a faithful representation of $\mathcal{B}_{1}%
\otimes_{\text{\textrm{min}}}\mathcal{B}_{2}$. Moreover, if $U_{k,i}\in \left(
\mathcal{A}_{i},\mathcal{B}_{i}\right)  ^{\prime}$ is unitary for $i=1,2$ and
$1\leq k\leq m$ and if $0\leq s_{1},t_{1},\ldots,s_{m},t_{m}$ and $\sum
_{k=1}^{m}s_{k}=\sum_{k=1}^{m}t_{k}=1$, then $W_{k,r}=U_{k,1}\otimes
U_{r,2}\in \left(  \mathcal{A}_{1}\otimes_{\text{\textrm{min}}}\mathcal{A}%
_{2},\mathcal{B}_{1}\otimes_{\text{\textrm{min}}}\mathcal{B}_{2}\right)
^{\prime}$ and%
\[
\sum_{k,r=1}^{m}s_{k}t_{r}W_{k,r}\left(  T_{1}\otimes T_{2}\right)
W_{k,r}^{\ast}=\left(  \sum_{k=1}^{m}s_{k}U_{k,1}T_{1}U_{k,1}^{\ast}\right)
\otimes \left(  \sum_{r=1}^{m}t_{r}U_{r,2}T_{2}U_{r,2}^{\ast}\right)  \text{.}%
\]
Thus%
\[
\varphi_{\lambda}\left(  T_{1}\otimes T_{2}\right)  =\varphi_{\lambda
,1}\left(  T_{1}\right)  \otimes \varphi_{\lambda,2}\left(  T_{2}\right)
\]
defines an element $\varphi_{\lambda}\in \mathcal{F}\left(  \mathcal{A}%
_{1}\otimes_{\text{\textrm{min}}}\mathcal{A}_{2},\mathcal{B}_{1}%
\otimes_{\text{\textrm{min}}}\mathcal{B}_{2}\right)  $. Moreover,%
\[
\pi \left(  \varphi_{\lambda}\left(  T_{1}\otimes T_{2}\right)  \right)
=\pi_{1}\left(  \varphi_{\lambda,1}\left(  T_{1}\right)  \right)  \otimes
\pi_{2}\left(  \varphi_{\lambda,2}\left(  T_{2}\right)  \right)
\rightarrow \pi \left(  E\left(  T_{1}\otimes T_{2}\right)  \right)
\]
in the weak operator topology on $B\left(  H_{1}\otimes H_{2}\right)  .$ Hence
$\mathcal{A}_{1}\otimes_{\text{\textrm{min}}}\mathcal{A}_{2}$ is strongly
injective in $\mathcal{B}_{1}\otimes_{\text{\textrm{min}}}\mathcal{B}_{2}$.

$\left(  7\right)  $ and $\left(  8\right)  $ follow from $\left(  6\right)  $.
\end{proof}

\bigskip

Suppose $\mathcal{A}$ is a unital C*-subalgebra of a unital C*-algebra
$\mathcal{B}$. We define two seminorms on $\mathcal{B}$ as follows:%
\[
d_{n}\left(  T,\mathcal{A},\mathcal{B}\right)  =\sup \left \{  \left \Vert
WT-TW\right \Vert :W\in \left(  \mathcal{A},\mathcal{B}\right)  ^{\prime
},\left \Vert W\right \Vert \leq1\right \}  ,
\]
and%
\[
d_{an}\left(  T,\mathcal{A},\mathcal{B}\right)  =\sup_{\left \{  W_{\lambda
}\right \}  }\limsup_{\lambda}\left \Vert W_{\lambda}T-TW_{\lambda}\right \Vert
\]
taken over all nets $\left \{  W_{\lambda}\right \}  $ of contractions in
$\mathcal{B}$ for which $\left \Vert AW_{\lambda}-W_{\lambda}A\right \Vert
\rightarrow0$ for every $A\in \mathcal{A}$.

The following lemma is obvious and the proof is omitted.

\begin{lemma}
Suppose $\mathcal{A}$ is a unital norm closed subalgebra of a unital
C*-algebra $\mathcal{B}$ and $T\in \mathcal{B}$. Then

\begin{enumerate}
\item $d_{n}\left(  T,\mathcal{A},\mathcal{B}\right)  $ and $d_{an}\left(
T,\mathcal{A},\mathcal{B}\right)  $ are seminorms on $\mathcal{B}$,

\item $d_{n}\left(  T^{\ast},\mathcal{A},\mathcal{B}\right)  =d_{n}\left(
T,\mathcal{A},\mathcal{B}\right)  $ and $d_{an}\left(  T^{\ast},\mathcal{A}%
,\mathcal{B}\right)  =d_{an}\left(  T,\mathcal{A},\mathcal{B}\right)  ,$

\item $d_{n}\left(  T,\mathcal{A},\mathcal{B}\right)  \leq d_{an}\left(
T,\mathcal{A},\mathcal{B}\right)  \leq2dist\left(  T,\mathcal{A}\right)
\leq2\left \Vert T\right \Vert $

\item $d_{n}\left(  T,\mathcal{A},\mathcal{B}\right)  =0$ if and only if
$T\in \left(  \mathcal{A},\mathcal{B}\right)  ^{\prime \prime}$

\item $d_{an}\left(  T,\mathcal{A},\mathcal{B}\right)  =0$ if and only if
$T\in Appr\left(  \mathcal{A},\mathcal{B}\right)  ^{\prime \prime}$
\end{enumerate}
\end{lemma}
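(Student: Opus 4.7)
The proof is a direct verification of definitions, item by item; I plan to dispatch them in the order (1), (3), (2), (4), (5). For (1), the seminorm axioms descend from the commutator bracket $[W, T] = WT - TW$, which is linear in $T$ and satisfies the triangle inequality in operator norm, so taking a supremum over the contraction ball of $(\mathcal{A}, \mathcal{B})^{\prime}$ preserves positive homogeneity and subadditivity; for $d_{an}$ one also uses that $\limsup$ is subadditive and positive-homogeneous. For the chain in (3), the inequality $d_n(T) \leq d_{an}(T)$ is immediate from taking the constant net $W_\lambda \equiv W$, which is a legal competitor for $d_{an}$ whenever $W$ is a contraction in the relative commutant. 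For $d_{an}(T) \leq 2\,\mathrm{dist}(T, \mathcal{A})$, pick $A \in \mathcal{A}$ and decompose
\[
W_\lambda T - T W_\lambda = W_\lambda (T - A) - (T - A) W_\lambda + (W_\lambda A - A W_\lambda),
\]
giving $\|W_\lambda T - T W_\lambda\| \leq 2\|T - A\| + \|W_\lambda A - A W_\lambda\|$; taking $\limsup_\lambda$ kills the last term, and an infimum over $A \in \mathcal{A}$ yields the bound. The final inequality uses $0 \in \mathcal{A}$.

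For the adjoint invariance (2), take adjoints inside the norm: $\|WT^* - T^*W\| = \|(WT^* - T^*W)^*\| = \|W^* T - T W^*\|$. When $\mathcal{A} = \mathcal{A}^*$, the commutant $(\mathcal{A}, \mathcal{B})^{\prime}$ is $\ast$-closed and its contraction ball is $\ast$-invariant, so the bijection $W \mapsto W^*$ preserves the supremum and gives $d_n(T^*) = d_n(T)$. The same substitution handles $d_{an}$: if $\{W_\lambda\}$ asymptotically commutes with $\mathcal{A}$, so does $\{W_\lambda^*\}$ via $\|A W_\lambda^* - W_\lambda^* A\| = \|W_\lambda A^* - A^* W_\lambda\| \to 0$ (apply the defining hypothesis to $A^* \in \mathcal{A}$), while $\|W_\lambda^* T^* - T^* W_\lambda^*\| = \|W_\lambda T - T W_\lambda\|$.

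Parts (4) and (5) are tautological once the contractive normalization is absorbed by scaling: $d_n(T) = 0$ forces $WT = TW$ for every contraction $W \in (\mathcal{A}, \mathcal{B})^{\prime}$ and hence, by positive homogeneity, for every element of the relative commutant, which is exactly $T \in (\mathcal{A}, \mathcal{B})^{\prime \prime}$. For (5), $d_{an}(T) = 0$ says $\limsup_\lambda \|W_\lambda T - T W_\lambda\| = 0$ for every contraction net asymptotically commuting with $\mathcal{A}$, and nonnegativity of the norms upgrades this $\limsup$ to convergence; since every bounded net rescales to a contraction net, this is the definition of $T \in \mathrm{Appr}(\mathcal{A}, \mathcal{B})^{\prime \prime}$. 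The only mild friction is in (2), whose argument tacitly relies on $\mathcal{A} = \mathcal{A}^*$ (so that the commutant is $\ast$-closed); I would flag this assumption explicitly rather than attempt (2) for a genuinely non-selfadjoint norm-closed subalgebra, where the statement can fail.
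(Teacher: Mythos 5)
Your verification is correct, and there is nothing in the paper to compare it with: the authors declare this lemma obvious and omit the proof, and your item-by-item check (suprema of seminorms for (1); constant nets for $d_{n}\leq d_{an}$; the decomposition $W_{\lambda}T-TW_{\lambda}=W_{\lambda}\left(  T-A\right)  -\left(  T-A\right)  W_{\lambda}+\left(  W_{\lambda}A-AW_{\lambda}\right)  $ for (3); rescaling bounded nets to contraction nets for (4) and (5)) is exactly the routine argument being suppressed. Your caveat on (2) is the one substantive point, and it is well taken: as printed the lemma hypothesizes only a unital norm-closed subalgebra, but (2) genuinely requires $\mathcal{A}=\mathcal{A}^{\ast}$ (or at least a $\ast$-closed relative commutant and a $\ast$-invariant class of asymptotically commuting nets). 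A concrete failure: in $\mathcal{B}=\mathcal{M}_{2}\left(  \mathbb{C}\right)  $ let $E_{12}$ be the usual matrix unit and $\mathcal{A}=\mathbb{C}1+\mathbb{C}E_{12}$; then $\left(  \mathcal{A},\mathcal{B}\right)  ^{\prime}=\mathcal{A}$ and one computes $d_{n}\left(  T,\mathcal{A},\mathcal{B}\right)  =\left \Vert E_{12}T-TE_{12}\right \Vert $, so for $T=E_{11}+E_{12}$ one gets $d_{n}\left(  T,\mathcal{A},\mathcal{B}\right)  =1$ while $d_{n}\left(  T^{\ast},\mathcal{A},\mathcal{B}\right)  =\left \Vert E_{12}T^{\ast}-T^{\ast}E_{12}\right \Vert =\frac{1+\sqrt{5}}{2}$. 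Since the seminorms are introduced in the paper for C*-subalgebras and (2) is only ever invoked in the selfadjoint setting, the imprecision is harmless there, but your instinct to state the hypothesis $\mathcal{A}=\mathcal{A}^{\ast}$ explicitly in (2) is the right one.
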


\bigskip

We define $K_{n}\left(  \mathcal{A},\mathcal{B}\right)  $ and $K_{an}\left(
\mathcal{A},\mathcal{B}\right)  $ by%
\[
K_{n}\left(  \mathcal{A},\mathcal{B}\right)  =\sup \left \{  dist\left(
T,\mathcal{A}\right)  :T\in \mathcal{B},d_{n}\left(  T,\mathcal{A}%
,\mathcal{B}\right)  \leq1\right \}  ,
\]%
\[
K_{an}\left(  \mathcal{A},\mathcal{B}\right)  =\sup \left \{  dist\left(
T,\mathcal{A}\right)  :T\in \mathcal{B},d_{an}\left(  T,\mathcal{A}%
,\mathcal{B}\right)  \leq1\right \}  .
\]
Clearly $K_{n}\left(  \mathcal{A},\mathcal{B}\right)  $ is the smallest
$M\geq0$ such that, for every $T\in \mathcal{B}$, we have $dist\left(
T,\mathcal{A}\right)  \leq Md_{n}\left(  T,\mathcal{A},\mathcal{B}\right)  $
and $K_{an}\left(  \mathcal{A},\mathcal{B}\right)  $ is the smallest $N\geq0$
such that, for every $T\in \mathcal{B}$, we have $dist\left(  T,\mathcal{A}%
\right)  \leq Nd_{an}\left(  T,\mathcal{A},\mathcal{B}\right)  $. We say that
$\mathcal{A}$ is \emph{metric normal} in $\mathcal{A}$ if $K_{n}\left(
\mathcal{A},\mathcal{B}\right)  <\infty$ and $\mathcal{A}$ is \emph{metric
approximately normal} if $K_{an}\left(  \mathcal{A},\mathcal{B}\right)
<\infty$. It is also clear that $K_{an}\left(  \mathcal{A},\mathcal{B}\right)
\leq K_{n}\left(  \mathcal{A},\mathcal{B}\right)  $, so metric normality
implies metric approximate normality.

The following proposition shows the relationship between strong injectivity
and metric normality.

\begin{proposition}
\label{dl}Suppose $\mathcal{A}_{1}\subseteq \mathcal{A}_{2}\subseteq
\cdots \subseteq \mathcal{A}_{m}$ is are unital inclusions of C*-algebras and
$\mathcal{A}_{k}$ is weakly injective in $\mathcal{A}_{k+1}$ for $1\leq k<m$.
Then%
\[
K_{n}\left(  \mathcal{A}_{1},\mathcal{A}_{m}\right)  \leq1.
\]

\end{proposition}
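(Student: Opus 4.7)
The plan is to prove by induction on $m$ the following strengthening: for every $T\in\mathcal{A}_m$ there exist a net $\{\xi_\gamma\}$ in $\mathcal{F}(\mathcal{A}_1,\mathcal{A}_m)$ and an element $S\in\mathcal{A}_1$ such that $\pi(\xi_\gamma(T))$ converges to $\pi(S)$ in the weak operator topology for some (equivalently, by Proposition~\ref{inj}(1), every) faithful unital representation $\pi\colon\mathcal{A}_m\to B(H)$. From this the conclusion follows at once: writing each $\xi_\gamma=\sum_k r_k\,Ad_{U_k}$ with $U_k$ unitary in $(\mathcal{A}_1,\mathcal{A}_m)^{\prime}$,
\[
\|\xi_\gamma(T)-T\|\leq\sum_k r_k\|U_k^{\ast}TU_k-T\|=\sum_k r_k\|TU_k-U_kT\|\leq d_n(T,\mathcal{A}_1,\mathcal{A}_m),
\]
and because $\pi$ is isometric and the operator norm is weak-operator lower-semicontinuous, this bound passes to $\|S-T\|$; since $S\in\mathcal{A}_1$ this yields $dist(T,\mathcal{A}_1)\leq d_n(T,\mathcal{A}_1,\mathcal{A}_m)$, i.e., $K_n(\mathcal{A}_1,\mathcal{A}_m)\leq 1$.

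The base case $m=2$ is just the definition of strong injectivity. For the inductive step, first use strong injectivity of $\mathcal{A}_{m-1}$ in $\mathcal{A}_m$ to produce a conditional expectation $E_{m-1}\colon\mathcal{A}_m\to\mathcal{A}_{m-1}$, a faithful representation $\pi$ of $\mathcal{A}_m$, and a net $\varphi_\lambda=\sum_i s_i\,Ad_{U_i}\in\mathcal{F}(\mathcal{A}_{m-1},\mathcal{A}_m)$ with $\pi(\varphi_\lambda(T))\to\pi(E_{m-1}(T))$ in the weak operator topology. Then apply the inductive hypothesis to $E_{m-1}(T)\in\mathcal{A}_{m-1}$, using $\pi|_{\mathcal{A}_{m-1}}$ as the faithful representation, to obtain $\psi_\mu=\sum_j t_j\,Ad_{V_j}\in\mathcal{F}(\mathcal{A}_1,\mathcal{A}_{m-1})$ and $S\in\mathcal{A}_1$ with $\pi(\psi_\mu(E_{m-1}(T)))\to\pi(S)$ in the weak operator topology. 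Extending each $\psi_\mu$ to $\mathcal{A}_m$ by the same formula produces $\tilde\psi_\mu\in\mathcal{F}(\mathcal{A}_1,\mathcal{A}_m)$, since $V_j\in(\mathcal{A}_1,\mathcal{A}_{m-1})^{\prime}\subseteq(\mathcal{A}_1,\mathcal{A}_m)^{\prime}$.

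Next I consider the composition $\tilde\psi_\mu\circ\varphi_\lambda(T)=\sum_{i,j}s_it_j(U_iV_j)^{\ast}T(U_iV_j)$. Because $U_i\in(\mathcal{A}_{m-1},\mathcal{A}_m)^{\prime}$ and $V_j\in\mathcal{A}_{m-1}$, each $U_i$ commutes with each $V_j$; moreover both families commute with $\mathcal{A}_1$, so every product $U_iV_j$ is a unitary in $(\mathcal{A}_1,\mathcal{A}_m)^{\prime}$, whence $\tilde\psi_\mu\circ\varphi_\lambda\in\mathcal{F}(\mathcal{A}_1,\mathcal{A}_m)$. For fixed $\mu$ the map $X\mapsto\sum_j t_j\pi(V_j)^{\ast}X\pi(V_j)$ on $B(H)$ is weak-operator continuous, so $\pi(\tilde\psi_\mu(\varphi_\lambda(T)))\to\pi(\psi_\mu(E_{m-1}(T)))$ as $\lambda$ varies; a further limit in $\mu$ then yields $\pi(S)$. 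A standard iterated-limit argument (Kelley, \emph{General Topology}, Ch.~2) extracts from the product directed set a single net $\{\xi_\gamma\}\subseteq\mathcal{F}(\mathcal{A}_1,\mathcal{A}_m)$ with $\pi(\xi_\gamma(T))\to\pi(S)$ in the weak operator topology, completing the inductive construction.

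The main obstacle is that a naive induction via the triangle inequality $\|T-S\|\leq\|T-E_{m-1}(T)\|+\|E_{m-1}(T)-S\|$ would only produce the loose bound $K_n(\mathcal{A}_1,\mathcal{A}_m)\leq m-1$. To get the sharp constant $1$ one must combine the two averaging nets \emph{multiplicatively} inside a single copy of $\mathcal{F}(\mathcal{A}_1,\mathcal{A}_m)$ and apply only one weak-operator lower-semicontinuity estimate against $d_n(T,\mathcal{A}_1,\mathcal{A}_m)$; the tower hypothesis is used precisely to ensure that the product unitaries $U_iV_j$ land in $(\mathcal{A}_1,\mathcal{A}_m)^{\prime}$, and the passage from the iterated limit to a single net is the only mildly delicate point.
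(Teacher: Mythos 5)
Your concluding estimate and your key structural observation are fine: since every unitary in $\left(\mathcal{A}_{k-1},\mathcal{A}_{k}\right)^{\prime}$ lies in $\left(\mathcal{A}_{1},\mathcal{A}_{m}\right)^{\prime}$, compositions such as $\tilde\psi_{\mu}\circ\varphi_{\lambda}$ do land in $\mathcal{F}\left(\mathcal{A}_{1},\mathcal{A}_{m}\right)$, and weak-operator lower semicontinuity of the norm then gives the constant $1$; this is essentially the same mechanism as the paper's inclusion $\mathcal{F}\left(\mathcal{A}_{k-1},\mathcal{A}_{k}\right)\subseteq\mathcal{F}\left(\mathcal{A}_{1},\mathcal{A}_{m}\right)$. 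The genuine gap is the parenthetical ``for some (equivalently, by Proposition \ref{inj}(1), every) faithful unital representation.'' Proposition \ref{inj}(1) does not assert this: it enlarges the pool of averaging unitaries from $\mathcal{B}$ to $\pi\left(\mathcal{B}\right)^{\prime\prime}$ \emph{within one fixed} faithful representation (via Kaplansky density); it says nothing about transporting point-weak-operator convergence from one faithful representation to another. That transfer is not automatic: on bounded sets the weak operator topology of $\pi$ is convergence against the $\pi$-normal functionals, and the weak-operator closure of a bounded convex set genuinely depends on which functionals are $\pi$-normal (for instance, a singular state of $B\left(H\right)$, which is a vector state in a suitable faithful representation, separates $1$ from the convex set of finite-rank positive contractions, whose WOT closure in the identity representation contains $1$). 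In your inductive step the representation $\pi$ in which $\pi\left(\varphi_{\lambda}\left(T\right)\right)\rightarrow\pi\left(E_{m-1}\left(T\right)\right)$ is dictated by the injectivity hypothesis for $\mathcal{A}_{m-1}\subseteq\mathcal{A}_{m}$, while the inductive hypothesis only delivers convergence of $\psi_{\mu}\left(E_{m-1}\left(T\right)\right)$ in some unrelated representation of $\mathcal{A}_{m-1}$; without a common topology the iterated-limit extraction has nothing to act on, so the induction does not close as written.

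The paper's own proof avoids iterated WOT limits altogether: it fixes the ball $B$ of radius $d_{n}\left(T,\mathcal{A}_{1},\mathcal{A}_{m}\right)$ about $T$, shows that the WOT-closed convex hull of $\left\{UTU^{\ast}:U\in\left(\mathcal{A}_{1},\mathcal{A}_{m}\right)^{\prime}\text{ unitary}\right\}$ is invariant under such conjugations and contained in $B$, and then records at each stage only the \emph{norm} fact that the iterated expectation $E_{k}\cdots E_{m}\left(T\right)$ stays in $B$, ending with an element of $B\cap\mathcal{A}_{1}$. If you want to salvage your multiplicative scheme, you must either prove the representation-compatibility you invoked (e.g., that the convergence defining each conditional expectation can be arranged in the restriction of a single faithful representation of $\mathcal{A}_{m}$), or restructure the induction so that, as in the paper, only representation-independent norm estimates are passed from one stage to the next; note that in the weak* injective (von Neumann) setting the difficulty disappears, because the weak* topology of the ambient algebra restricts correctly to every von Neumann subalgebra, but in the C*-setting of strong injectivity the representation is part of the data and cannot be exchanged for free.
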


\begin{proof}
For each $k$, $2\leq k\leq m$ choose a net $\left \{  \varphi_{\lambda
,k}\right \}  $ in $\mathcal{F}\left(  \mathcal{A}_{k-1},\mathcal{A}%
_{k}\right)  $, a conditional expectation $E_{k}:\mathcal{A}_{k}%
\rightarrow \mathcal{A}_{k-1}$ and a faithful representation $\pi
_{k}:\mathcal{A}_{k}\rightarrow B\left(  H_{k}\right)  $ such that%
\[
\pi_{k}\left(  \varphi_{\lambda,k}\left(  T\right)  \right)  \rightarrow
\pi_{k}\left(  E_{k}\left(  T\right)  \right)
\]
in the weak operator topology for every $T\in \mathcal{A}_{k}$. It is clear
that
\[
\mathcal{F}\left(  \mathcal{A}_{k-1},\mathcal{A}_{k}\right)  \subseteq
\mathcal{F}\left(  \mathcal{A}_{1},\mathcal{A}_{m}\right)
\]
for $2\leq k\leq m$. Morover, if $U$ is unitary and $U\in \left(
\mathcal{A}_{1},\mathcal{A}_{m}\right)  ^{\prime}$, then%
\[
\left \Vert T-UTU^{\ast}\right \Vert =\left \Vert TU-UT\right \Vert \leq
d_{n}\left(  T,\mathcal{A}_{1},\mathcal{A}_{m}\right)
\]
for all $T\in \mathcal{A}_{m}$. Suppose $T\in \mathcal{A}_{m}$, and let $B$
denote the closed ball in $\mathcal{A}_{m}$ centered at $T$ with radius
$d_{n}\left(  T,\mathcal{A}_{1},\mathcal{A}_{m}\right)  $. Let $\mathcal{W}%
_{m}$ denote the set of all $A\in \mathcal{A}_{m}$ such that $\pi_{m}\left(
A\right)  $ is in the weak-operator closure of the convex hull of $\left \{
\pi_{m}\left(  UTU^{\ast}\right)  :U\in \left(  \mathcal{A}_{1},\mathcal{A}%
_{m}\right)  ^{\prime},\text{ }U~\text{is unitary}\right \}  $. Clearly
$\mathcal{W}_{m}$ is convex and closed under conjugation by unitaries in
$\left(  \mathcal{A}_{1},\mathcal{A}_{m}\right)  ^{\prime}$, and, since
$\pi_{m}$ is an isometry, $\mathcal{W}_{m}\subseteq B$. It follows that
$E_{m}\left(  T\right)  \in \mathcal{W}_{m}$. Next we let $\mathcal{W}_{m-1}$
denote the set of all $A\in \mathcal{A}_{m}$ such that $\pi_{m-1}\left(
A\right)  $ is in the weak-operator closure of the convex hull of $\left \{
\pi_{m-1}\left(  UE_{m}\left(  T\right)  U^{\ast}\right)  :U\in \left(
\mathcal{A}_{1},\mathcal{A}_{m}\right)  ^{\prime},\text{ }U~\text{is
unitary}\right \}  $. Clearly $\mathcal{W}_{m-1}$ is convex and closed under
conjugation by unitaries in $\left(  \mathcal{A}_{1},\mathcal{A}_{m}\right)
^{\prime}$, and, since $\pi_{m-1}$ is an isometry, $\mathcal{W}_{m-1}\subseteq
B$, and it follows that $E_{m-1}\left(  E_{m}\left(  T\right)  \right)
\in \mathcal{W}_{m-1}\subseteq B$. Proceeding inductively, we see that
\[
E_{2}\left(  E_{3}\left(  \cdots E_{m}\left(  T\right)  \right)  \right)  \in
B\cap \mathcal{A}_{1},
\]
from which it follows that
\[
dist\left(  T,\mathcal{A}_{1}\right)  \leq d_{n}\left(  T,\mathcal{A}%
_{1},\mathcal{A}_{m}\right)  .
\]
Hence $K_{n}\left(  \mathcal{A}_{1},\mathcal{A}_{m}\right)  \leq1.$
\end{proof}

\bigskip

The following corollaries follow from Proposition $1$ and Proposition $3$.

\begin{corollary}
If $\mathcal{B}$ is a von Neumann algebra and $\mathcal{A}$ is a normal von
Neumann subalgebra such that $\left(  \mathcal{A},\mathcal{B}\right)
^{\prime}$ is hyperfinite, then $\mathcal{A}$ is metric normal in
$\mathcal{B}$ and $K_{n}\left(  \mathcal{M}_{k}\left(  \mathcal{A}\right)
,\mathcal{M}_{k}\left(  \mathcal{B}\right)  \right)  \leq1$ for every
$k\in \mathbb{N}$.
\end{corollary}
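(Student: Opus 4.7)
The plan is to combine Proposition~\ref{inj}(3), Proposition~\ref{inj}(8), and Proposition~\ref{dl}, chaining them in a very direct way. Under the hypotheses of the corollary ($\mathcal{B}$ a von Neumann algebra, $\mathcal{A}$ a normal von Neumann subalgebra, $(\mathcal{A},\mathcal{B})'$ hyperfinite), Proposition~\ref{inj}(3) applies and yields that $\mathcal{A}$ is weak* injective in $\mathcal{B}$. Taking $\pi$ to be the identity (faithful normal) representation of $\mathcal{B}$ on the Hilbert space on which it acts, weak* convergence coincides with weak operator convergence on bounded subsets, so the net $\{\varphi_\lambda\}$ witnessing weak* injectivity also witnesses strong injectivity; hence $\mathcal{A}$ is strongly injective in $\mathcal{B}$.

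For the first assertion, I apply Proposition~\ref{dl} with $m = 2$, $\mathcal{A}_1 = \mathcal{A}$, and $\mathcal{A}_2 = \mathcal{B}$: the single inclusion satisfies the strong-injectivity hypothesis (the ``weakly injective'' condition in the statement of Proposition~\ref{dl}, whose proof in fact uses exactly strong injectivity), and the conclusion $K_n(\mathcal{A},\mathcal{B}) \leq 1$ is precisely the assertion that $\mathcal{A}$ is metric normal in $\mathcal{B}$. For the matrix version, I invoke Proposition~\ref{inj}(8): since $\mathcal{A}$ is strongly injective in $\mathcal{B}$, the algebra $\mathcal{M}_k(\mathcal{A}) = \mathcal{M}_k(\mathbb{C}) \otimes \mathcal{A}$ is strongly injective in $\mathcal{M}_k(\mathcal{B}) = \mathcal{M}_k(\mathbb{C}) \otimes \mathcal{B}$ for every $k \in \mathbb{N}$. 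A second application of Proposition~\ref{dl} (with the two-step chain $\mathcal{M}_k(\mathcal{A}) \subseteq \mathcal{M}_k(\mathcal{B})$) then yields $K_n(\mathcal{M}_k(\mathcal{A}), \mathcal{M}_k(\mathcal{B})) \leq 1$, as required.

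The corollary is essentially a repackaging of the three cited results, so I do not anticipate any real obstacle. The one point that deserves explicit mention is the passage from weak* injectivity (what (3) delivers) to strong injectivity (what Proposition~\ref{dl} ingests): this is immediate from the definitions, since weak* injectivity is precisely the case of strong injectivity with $\pi$ the identity, but it is easy to elide if one is not careful about which variant of injectivity each result needs.
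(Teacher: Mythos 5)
Your proposal is correct and follows the same route the paper intends: the paper gives no written proof beyond stating that the corollary "follows from Proposition \ref{inj} and Proposition \ref{dl}," and your chain (Proposition \ref{inj}(3) for weak* injectivity, the observation that this is the special case of strong injectivity with $\pi$ the identity, Proposition \ref{inj}(8) for the matrix amplification, and Proposition \ref{dl} applied to the two-term inclusions) is exactly that argument spelled out. Your remark identifying the "weakly injective" hypothesis in Proposition \ref{dl} with strong injectivity is the right reading of the paper's terminology.
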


\bigskip

\begin{corollary}
If $\mathcal{A}$ is a maximal abelian selfadjoint subalgebra of a von Neumann
algebra $\mathcal{B}$, then $K_{n}\left(  \mathcal{M}_{k}\left(
\mathcal{A}\right)  ,\mathcal{M}_{k}\left(  \mathcal{B}\right)  \right)
\leq1$ for every $k\in \mathbb{N}$.
\end{corollary}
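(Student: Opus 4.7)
The plan is to deduce this corollary directly from the preceding one by checking that every masa $\mathcal{A}$ in a von Neumann algebra $\mathcal{B}$ automatically fulfills both of its hypotheses: that $\mathcal{A}$ is a normal von Neumann subalgebra of $\mathcal{B}$, and that the relative commutant $(\mathcal{A},\mathcal{B})'$ is hyperfinite.

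First, I would verify that $\mathcal{A}$ is itself a von Neumann subalgebra. The von Neumann algebra it generates inside $\mathcal{B}$ is again abelian (two strong-operator limits of bounded commuting nets still commute, and by Kaplansky density it suffices to consider bounded nets), so by maximality this generated algebra coincides with $\mathcal{A}$.

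Second, I would observe that the masa condition is equivalent to $(\mathcal{A},\mathcal{B})' = \mathcal{A}$: if some $y \in \mathcal{A}'\cap \mathcal{B}$ failed to lie in $\mathcal{A}$, then decomposing $y$ into its selfadjoint real and imaginary parts and adjoining one of them to $\mathcal{A}$ would produce a strictly larger abelian selfadjoint subalgebra. From $(\mathcal{A},\mathcal{B})' = \mathcal{A}$ two conclusions follow at once. The relative commutant is abelian and hence hyperfinite. Taking the relative commutant once more yields
\[
(\mathcal{A},\mathcal{B})'' = \bigl((\mathcal{A},\mathcal{B})',\mathcal{B}\bigr)' = (\mathcal{A},\mathcal{B})' = \mathcal{A},
\]
which is exactly Kadison's normality condition.

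With both hypotheses of the preceding corollary in hand, it delivers the bound $K_n(\mathcal{M}_k(\mathcal{A}),\mathcal{M}_k(\mathcal{B})) \leq 1$ for every $k \in \mathbb{N}$. I do not anticipate any genuine obstacle here; the argument is essentially an unpacking of the definition of a masa followed by a direct appeal to the previous corollary, with the only mildly technical step being the (standard) verification that a masa is weakly closed.
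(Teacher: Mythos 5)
Your proposal is correct and follows essentially the same route as the paper: the paper derives this corollary from the preceding one (via Rosenoer's result quoted in Proposition \ref{inj}, whose hypothesis explicitly covers masas), exactly as you do by observing that a masa satisfies $\left(  \mathcal{A},\mathcal{B}\right)  ^{\prime}=\mathcal{A}$, which is abelian and hence hyperfinite, and which forces both weak closedness and normality.
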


\bigskip

\begin{corollary}
If $\mathcal{A}$ is a maximal abelian selfadjoint subalgebra of a von Neumann
algebra $\mathcal{B}$, and $\mathcal{W}$ is any von Neumann algebra, then
$K_{n}\left(  \mathcal{W}\otimes \mathcal{A},\mathcal{W}\otimes \mathcal{B}%
\right)  \leq1,$ where $\otimes$ denotes the spatial tensor product.\bigskip
\end{corollary}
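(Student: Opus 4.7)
The plan is to reduce the corollary to Proposition~\ref{dl} (applied to the two-term chain $\mathcal{W}\otimes\mathcal{A}\subseteq\mathcal{W}\otimes\mathcal{B}$) by establishing that $\mathcal{W}\otimes\mathcal{A}$ is weak* injective in $\mathcal{W}\otimes\mathcal{B}$. Since $\mathcal{A}$ is a MASA, its relative commutant $(\mathcal{A},\mathcal{B})^{\prime}$ equals $\mathcal{A}$, which is abelian and hence hyperfinite, so Proposition~\ref{inj}(3) supplies a normal conditional expectation $E:\mathcal{B}\to\mathcal{A}$ together with a net $\{\varphi_{\lambda}\}$ in $\mathcal{F}(\mathcal{A},\mathcal{B})$ with $\varphi_{\lambda}(T)\to E(T)$ in the weak* topology for every $T\in\mathcal{B}$. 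Each $\varphi_{\lambda}$ has the form $\sum_{i}s_{i}^{\lambda}\operatorname{Ad}_{U_{i}^{\lambda}}$ with unitaries $U_{i}^{\lambda}\in\mathcal{A}$.

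Next I would lift the averaging to the tensor product. Because $\mathcal{A}$ is abelian, each $U_{i}^{\lambda}\in\mathcal{A}$ commutes with $\mathcal{A}$, so $1\otimes U_{i}^{\lambda}$ lies in $(\mathcal{W}\otimes\mathcal{A},\mathcal{W}\otimes\mathcal{B})^{\prime}$, and therefore $F_{\lambda}:=\operatorname{id}_{\mathcal{W}}\otimes\varphi_{\lambda}$ belongs to $\mathcal{F}(\mathcal{W}\otimes\mathcal{A},\mathcal{W}\otimes\mathcal{B})$. Likewise, $F:=\operatorname{id}_{\mathcal{W}}\otimes E$ is a normal conditional expectation from $\mathcal{W}\otimes\mathcal{B}$ onto $\mathcal{W}\otimes\mathcal{A}$. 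The heart of the proof is to verify that $F_{\lambda}(T)\to F(T)$ weak* for every $T$ in the von Neumann tensor product. For an elementary normal functional $\omega=\tau\otimes\rho$ with $\tau\in\mathcal{W}_{\ast}$ and $\rho\in\mathcal{B}_{\ast}$, the slice-map identity $(\tau\otimes\operatorname{id})\circ F_{\lambda}=\varphi_{\lambda}\circ(\tau\otimes\operatorname{id})$ gives
\[
\omega(F_{\lambda}(T))=\rho\bigl(\varphi_{\lambda}((\tau\otimes\operatorname{id})(T))\bigr)\longrightarrow\rho\bigl(E((\tau\otimes\operatorname{id})(T))\bigr)=\omega(F(T)).
\]
For a general normal $\omega$, approximate it in norm within the predual $(\mathcal{W}\otimes\mathcal{B})_{\ast}=\mathcal{W}_{\ast}\hat{\otimes}\mathcal{B}_{\ast}$ by finite sums of elementary tensor functionals; since $\|F_{\lambda}\|,\|F\|\leq1$ uniformly, the error from this approximation is controlled by $\|T\|$ times the approximation error, and letting the approximation tighten completes the argument. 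Consequently $\mathcal{W}\otimes\mathcal{A}$ is weak* injective in $\mathcal{W}\otimes\mathcal{B}$.

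Applying Proposition~\ref{dl} to the chain $\mathcal{W}\otimes\mathcal{A}\subseteq\mathcal{W}\otimes\mathcal{B}$ then yields $K_{n}(\mathcal{W}\otimes\mathcal{A},\mathcal{W}\otimes\mathcal{B})\leq1$, which is the desired conclusion. The main technical obstacle is the verification of pointwise weak* convergence of $F_{\lambda}$ to $F$ on the whole von Neumann tensor product: the algebraic tensor product is only weak*-dense (not norm-dense) in $\mathcal{W}\otimes\mathcal{B}$, so one cannot transfer convergence directly and must instead argue on the predual side, exploiting the projective tensor description $(\mathcal{W}\otimes\mathcal{B})_{\ast}=\mathcal{W}_{\ast}\hat{\otimes}\mathcal{B}_{\ast}$ together with the uniform contractivity of the $F_{\lambda}$.
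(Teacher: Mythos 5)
Your overall strategy (lift the averaging maps $\varphi_{\lambda}=\sum_i s_i\,\mathrm{Ad}_{U_i}$, $U_i\in(\mathcal{A},\mathcal{B})'=\mathcal{A}$, to $\mathrm{Ad}_{1\otimes U_i}$ and reduce to Proposition \ref{dl}) is the same as the paper's, which obtains the corollary from Proposition \ref{inj}(3) and (7) together with Proposition \ref{dl}. However, there is a genuine gap in your execution: you assert that Proposition \ref{inj}(3) supplies a \emph{normal} conditional expectation $E:\mathcal{B}\rightarrow\mathcal{A}$, and everything downstream depends on this, because $F=\mathrm{id}_{\mathcal{W}}\otimes E$ is only defined on the von Neumann tensor product (and only satisfies the slice identity $(\tau\otimes\mathrm{id})\circ F=E\circ(\tau\otimes\mathrm{id})$ that your computation uses) when $E$ is normal. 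Weak* injectivity only gives $E$ as a pointwise weak* limit of the normal maps $\varphi_{\lambda}$, and such a limit need not be normal; indeed, for a masa in a von Neumann algebra a normal expectation generally does not exist --- e.g.\ there is no normal conditional expectation from $B\left(L^{2}[0,1]\right)$ onto the continuous masa $L^{\infty}[0,1]$. For such $\mathcal{A}$ the map $F$ you invoke is simply not there, so the predual approximation step has no target and the claimed weak* injectivity of $\mathcal{W}\otimes\mathcal{A}$ in $\mathcal{W}\otimes\mathcal{B}$ is not established.

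Two ways to close the gap. The paper's route stays with the minimal (C*-spatial) tensor product: there the algebraic tensor product is norm dense, $\mathrm{id}_{\mathcal{W}}\otimes E$ makes sense for an arbitrary (not necessarily normal) conditional expectation because $E$ is a completely positive contraction, and one only needs strong injectivity, i.e.\ weak operator convergence in some faithful representation; this is exactly Proposition \ref{inj}(6)--(7), after which Proposition \ref{dl} yields $K_{n}\leq1$. If you insist on the von Neumann tensor product, you can avoid normality of $E$ altogether: by weak* compactness of bounded balls pass to a weak* cluster point $T_{1}$ of $\left\{\left(\mathrm{id}\otimes\varphi_{\lambda}\right)\left(T\right)\right\}$; your slice computation shows $\left(\tau\otimes\mathrm{id}\right)\left(T_{1}\right)=E\left(\left(\tau\otimes\mathrm{id}\right)\left(T\right)\right)\in\mathcal{A}$ for every $\tau\in\mathcal{W}_{\ast}$, so Tomiyama's slice-map (Fubini) theorem for von Neumann tensor products gives $T_{1}\in\mathcal{W}\overline{\otimes}\mathcal{A}$; since each $\left(\mathrm{id}\otimes\varphi_{\lambda}\right)\left(T\right)$ lies in the norm ball of radius $d_{n}\left(T,\mathcal{W}\otimes\mathcal{A},\mathcal{W}\otimes\mathcal{B}\right)$ about $T$, which is weak* closed, it follows that $dist\left(T,\mathcal{W}\overline{\otimes}\mathcal{A}\right)\leq d_{n}\left(T,\mathcal{W}\otimes\mathcal{A},\mathcal{W}\otimes\mathcal{B}\right)$ and hence $K_{n}\leq1$. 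As written, though, your proof does not go through for a general masa.
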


\begin{corollary}
If $\mathcal{B}$ is a hyperfinite von Neumann algebra, then every normal von
Neumann subalgebra $\mathcal{A}$ of $\mathcal{B}$ is metric normal and%
\[
K_{n}\left(  \mathcal{A},\mathcal{B}\right)  \leq1.
\]

\end{corollary}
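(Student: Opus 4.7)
The plan is to reduce this to the preceding corollary, which already provides the estimate $K_n(\mathcal{A},\mathcal{B})\leq 1$ under the hypothesis that $\mathcal{A}$ is a normal von Neumann subalgebra of $\mathcal{B}$ and that $(\mathcal{A},\mathcal{B})^{\prime}$ is hyperfinite. So the only thing left to establish is that for a \emph{hyperfinite} ambient $\mathcal{B}$, the relative commutant $(\mathcal{A},\mathcal{B})^{\prime}=\mathcal{A}^{\prime}\cap\mathcal{B}$ is automatically hyperfinite, regardless of which normal subalgebra $\mathcal{A}$ we start with.

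First I would record that $(\mathcal{A},\mathcal{B})^{\prime}$ is a unital weak$^{\ast}$-closed $\ast$-subalgebra of $\mathcal{B}$, i.e., a von Neumann subalgebra. Then I would invoke the standard fact (via Connes' theorem identifying hyperfiniteness with injectivity, together with the existence of normal conditional expectations) that every von Neumann subalgebra of a hyperfinite von Neumann algebra is again hyperfinite. In the finite case this is transparent: any von Neumann subalgebra of a finite hyperfinite algebra inherits a trace-preserving normal conditional expectation, and composing this with a conditional expectation from $B(H)$ onto $\mathcal{B}$ (which exists by injectivity of $\mathcal{B}$) yields an expectation from $B(H)$ onto the subalgebra, so the subalgebra is injective and hence hyperfinite. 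The general case reduces to this via the usual decomposition into semifinite and type III parts.

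Once $(\mathcal{A},\mathcal{B})^{\prime}$ is hyperfinite, the preceding corollary (with $k=1$) immediately yields $K_n(\mathcal{A},\mathcal{B})\leq 1$, so $\mathcal{A}$ is metric normal in $\mathcal{B}$.

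The main obstacle is the heredity step: the passage from hyperfiniteness of $\mathcal{B}$ to hyperfiniteness of the arbitrary von Neumann subalgebra $(\mathcal{A},\mathcal{B})^{\prime}$. Everything else is a straightforward application of previously established results (Proposition \ref{inj}(3) combined with Proposition \ref{dl} applied to the two-term chain $\mathcal{A}\subseteq\mathcal{B}$). A variant plan, which avoids invoking heredity directly, would be to prove weak$^{\ast}$-injectivity of $\mathcal{A}$ in $\mathcal{B}$ by hand via Schwartz's property $P$ (which hyperfinite algebras enjoy): given $T\in\mathcal{B}$, average $T$ by unitaries from $(\mathcal{A},\mathcal{B})^{\prime}$ to produce a net in $\mathcal{F}(\mathcal{A},\mathcal{B})$ whose weak$^{\ast}$-cluster points lie in $(\mathcal{A},\mathcal{B})^{\prime\prime}=\mathcal{A}$ (using normality of $\mathcal{A}$), yielding the required conditional expectation and net; then Proposition \ref{dl} finishes the proof as before.
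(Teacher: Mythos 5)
Your overall route is exactly the paper's intended one: the corollary is meant to follow from Proposition \ref{inj}(3) together with Proposition \ref{dl} (equivalently, from the first corollary after Proposition \ref{dl}), so everything reduces to showing that $\left(\mathcal{A},\mathcal{B}\right)^{\prime}$ is hyperfinite. The genuine gap is in your justification of that heredity step. The assertion that every von Neumann subalgebra of a hyperfinite von Neumann algebra is again hyperfinite is false: by Connes' theorem hyperfiniteness is injectivity, and injectivity does not pass to von Neumann subalgebras in the absence of a (normal) conditional expectation onto them. Concretely, $B\left(H\right)$ is hyperfinite, every von Neumann subalgebra of $B\left(H\right)$ is normal in $B\left(H\right)$ by the double commutant theorem, and for $\mathcal{A}=L\left(\mathbb{F}_{2}\right)$ in its standard representation the relative commutant $\left(\mathcal{A},B\left(H\right)\right)^{\prime}=R\left(\mathbb{F}_{2}\right)$ is not hyperfinite. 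So the precise claim you need --- that $\left(\mathcal{A},\mathcal{B}\right)^{\prime}$ is automatically hyperfinite whenever $\mathcal{B}$ is, for every normal subalgebra $\mathcal{A}$ --- already fails here. Your finite-case argument (trace-preserving expectation composed with an expectation of $B\left(H\right)$ onto $\mathcal{B}$) is correct, but the proposed reduction of the general case ``via the usual decomposition into semifinite and type III parts'' cannot work, since the failure occurs already for the semifinite, type $\mathrm{I}_{\infty}$ algebra $B\left(H\right)$.

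The variant via Schwartz's property P has the same defect: property P of $\mathcal{B}$ lets you average over unitaries of $\mathcal{B}$ with cluster points in $\mathcal{B}^{\prime}$, whereas you need to average over unitaries of $\left(\mathcal{A},\mathcal{B}\right)^{\prime}$ and land in $\left(\left(\mathcal{A},\mathcal{B}\right)^{\prime}\right)^{\prime}\cap\mathcal{B}=\mathcal{A}$, which requires property P (equivalently, hyperfiniteness) of $\left(\mathcal{A},\mathcal{B}\right)^{\prime}$ itself --- exactly the unestablished point. What is true, and would make your first plan complete, is the heredity when $\mathcal{B}$ is finite, or more generally whenever there is a normal conditional expectation of $\mathcal{B}$ onto $\left(\mathcal{A},\mathcal{B}\right)^{\prime}$. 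Note that the paper itself gives no argument beyond citing Propositions \ref{inj} and \ref{dl}, so the step you rightly singled out as ``the main obstacle'' is precisely where an honest proof (or an added hypothesis, such as finiteness of $\mathcal{B}$ or hyperfiniteness of the relative commutant, the latter being the earlier corollary) is required.
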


Without injectivity, this is the best analogue of Proposition $\ref{dl}$.

\begin{lemma}
\label{rel}If $\mathcal{A}\subseteq \mathcal{D}\subseteq \mathcal{B}$ are unital
C*-algebras, then%

\[
K_{n}\left(  \mathcal{A},\mathcal{B}\right)  \leq K_{n}\left(  \mathcal{D}%
,\mathcal{B}\right)  +K_{n}\left(  \mathcal{A},\mathcal{D}\right)  \left(
2K_{n}\left(  \mathcal{D},\mathcal{B}\right)  +1\right)  ,
\]
and%
\[
K_{an}\left(  \mathcal{A},\mathcal{B}\right)  \leq K_{an}\left(
\mathcal{D},\mathcal{B}\right)  +K_{an}\left(  \mathcal{A},\mathcal{D}\right)
\left(  2K_{an}\left(  \mathcal{D},\mathcal{B}\right)  +1\right)  .
\]

\end{lemma}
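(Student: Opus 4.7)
The plan is to use the triangle inequality through an intermediate approximation in $\mathcal{D}$: given $T\in\mathcal{B}$, first find $D\in\mathcal{D}$ close to $T$ using the hypothesis on $K_n(\mathcal{D},\mathcal{B})$, then approximate $D$ by an element of $\mathcal{A}$ using $K_n(\mathcal{A},\mathcal{D})$. The subtlety is that when passing to the second step, the relevant seminorm is $d_n(D,\mathcal{A},\mathcal{D})$ computed inside $\mathcal{D}$, and I need to bound this back in terms of $d_n(T,\mathcal{A},\mathcal{B})$ so that everything is controlled by a single quantity.

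For the first step, I would use the containment $(\mathcal{D},\mathcal{B})^{\prime}\subseteq(\mathcal{A},\mathcal{B})^{\prime}$, which gives $d_n(T,\mathcal{D},\mathcal{B})\leq d_n(T,\mathcal{A},\mathcal{B})$ directly from the definitions. Hence, for any $\varepsilon>0$, I can pick $D\in\mathcal{D}$ with
\[
\|T-D\|\leq K_n(\mathcal{D},\mathcal{B})\,d_n(T,\mathcal{A},\mathcal{B})+\varepsilon.
\]
For the second step, I note that any contractive $W\in(\mathcal{A},\mathcal{D})^{\prime}$ is automatically a contractive element of $(\mathcal{A},\mathcal{B})^{\prime}$ (since $\mathcal{D}\subseteq\mathcal{B}$), so I can use $W$ to test $T$ globally. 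The key identity is
\[
WD-DW=(WT-TW)+W(D-T)+(T-D)W,
\]
from which $\|WD-DW\|\leq d_n(T,\mathcal{A},\mathcal{B})+2\|T-D\|$. Taking the supremum over such $W$ gives $d_n(D,\mathcal{A},\mathcal{D})\leq d_n(T,\mathcal{A},\mathcal{B})+2\|T-D\|$, and then
\[
\mathrm{dist}(D,\mathcal{A})\leq K_n(\mathcal{A},\mathcal{D})\bigl[d_n(T,\mathcal{A},\mathcal{B})+2\|T-D\|\bigr].
\]

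Combining $\mathrm{dist}(T,\mathcal{A})\leq\|T-D\|+\mathrm{dist}(D,\mathcal{A})$, substituting the bound on $\|T-D\|$, grouping the coefficient of $d_n(T,\mathcal{A},\mathcal{B})$, and letting $\varepsilon\to 0$ yields exactly
\[
\mathrm{dist}(T,\mathcal{A})\leq\bigl[K_n(\mathcal{D},\mathcal{B})+K_n(\mathcal{A},\mathcal{D})(2K_n(\mathcal{D},\mathcal{B})+1)\bigr]d_n(T,\mathcal{A},\mathcal{B}).
\]
For $K_{an}$ the argument is essentially identical: given a net $\{W_\lambda\}$ of contractions in $\mathcal{D}$ asymptotically commuting with $\mathcal{A}$, the same net lies in $\mathcal{B}$ and asymptotically commutes with $\mathcal{A}$ there, so the three-term decomposition above, together with $\limsup$, gives $d_{an}(D,\mathcal{A},\mathcal{D})\leq d_{an}(T,\mathcal{A},\mathcal{B})+2\|T-D\|$, and the rest is the same algebra. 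The only place requiring any care, and the likely main obstacle for a careful reader, is verifying that contractions in $(\mathcal{A},\mathcal{D})^{\prime}$ inject correctly into $(\mathcal{A},\mathcal{B})^{\prime}$ without losing norm and that the asymptotic commutation property transfers up to $\mathcal{B}$ — both of which are immediate here because $\mathcal{D}$ is a unital C*-subalgebra of $\mathcal{B}$.
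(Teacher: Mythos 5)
Your proposal is correct and follows essentially the same route as the paper's proof: choose $D\in\mathcal{D}$ via $K_{n}\left(  \mathcal{D},\mathcal{B}\right)$ using $d_{n}\left(  T,\mathcal{D},\mathcal{B}\right)  \leq d_{n}\left(  T,\mathcal{A},\mathcal{B}\right)$, then bound $d_{n}\left(  D,\mathcal{A},\mathcal{D}\right)  \leq d_{n}\left(  T,\mathcal{A},\mathcal{B}\right)  +2\left\Vert T-D\right\Vert$ by testing with contractions in $\left(  \mathcal{A},\mathcal{D}\right)  ^{\prime}\subseteq\left(  \mathcal{A},\mathcal{B}\right)  ^{\prime}$, and combine via the triangle inequality, letting $\varepsilon\rightarrow0$; the $K_{an}$ case is handled by the same net argument the paper leaves as "similar."
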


\begin{proof}
We present the proof for $K_{n}$; the proof for $K_{an}$ is similar. Suppose
$T\in \mathcal{B}$ and $\varepsilon>0.$ Then
\[
dist\left(  T,\mathcal{D}\right)  <\left[  K_{n}\left(  \mathcal{D}%
,\mathcal{B}\right)  +\varepsilon \right]  d_{n}\left(  T,\mathcal{D}%
,\mathcal{B}\right)  .
\]
Hence there is a $D\in \mathcal{D}$ such that%
\[
\left \Vert T-D\right \Vert \leq \left[  K_{n}\left(  \mathcal{D},\mathcal{B}%
\right)  +\varepsilon \right]  d_{n}\left(  T,\mathcal{D},\mathcal{B}\right)
.
\]
Similarly, there is an $A\in \mathcal{A}$ such that%
\[
\left \Vert D-A\right \Vert \leq \left[  K_{n}\left(  \mathcal{A},\mathcal{D}%
\right)  +\varepsilon \right]  d_{n}\left(  D,\mathcal{A},\mathcal{D}\right)
.
\]
Hence%
\[
\left \Vert T-A\right \Vert \leq \left \Vert T-D\right \Vert +\left \Vert
D-A\right \Vert \leq
\]%
\[
\left[  K_{n}\left(  \mathcal{D},\mathcal{B}\right)  +\varepsilon \right]
d_{n}\left(  T,\mathcal{D},\mathcal{B}\right)  +\left[  K_{n}\left(
\mathcal{A},\mathcal{D}\right)  +\varepsilon \right]  d_{n}\left(
D,\mathcal{A},\mathcal{D}\right)  .
\]
However,
\[
d_{n}\left(  T,\mathcal{D},\mathcal{B}\right)  \leq d_{n}\left(
T,\mathcal{A},\mathcal{B}\right)  ,
\]
and%
\[
d_{n}\left(  D,\mathcal{A},\mathcal{D}\right)  \leq d_{n}\left(
D,\mathcal{A},\mathcal{B}\right)  \leq2\left \Vert T-D\right \Vert +d_{n}\left(
T,\mathcal{A},\mathcal{B}\right)  \leq
\]%
\[
2\left[  K_{n}\left(  \mathcal{D},\mathcal{B}\right)  +\varepsilon \right]
d_{n}\left(  T,\mathcal{D},\mathcal{B}\right)  +d_{n}\left(  T,\mathcal{A}%
,\mathcal{B}\right)  \leq
\]%
\[
\left(  2\left[  K_{n}\left(  \mathcal{D},\mathcal{B}\right)  +\varepsilon
\right]  +1\right)  d_{n}\left(  T,\mathcal{A},\mathcal{B}\right)
\]

Hence%
\[
dist\left(  T,\mathcal{A}\right)  \leq \left \Vert T-A\right \Vert \leq
\]%
\[
\left[  K_{n}\left(  \mathcal{D},\mathcal{B}\right)  +\varepsilon \right]
d_{n}\left(  T,\mathcal{A},\mathcal{B}\right)  +\left[  K_{n}\left(
\mathcal{A},\mathcal{D}\right)  +\varepsilon \right]  \left(  2\left[
K_{n}\left(  \mathcal{D},\mathcal{B}\right)  +\varepsilon \right]  +1\right)
d_{n}\left(  T,\mathcal{A},\mathcal{B}\right)  .
\]
Letting $\varepsilon \rightarrow0^{+}$, we see%
\[
dist\left(  T,\mathcal{A}\right)  \leq
\]%
\[
\left[  K_{n}\left(  \mathcal{D},\mathcal{B}\right)  +K_{n}\left(
\mathcal{A},\mathcal{D}\right)  \left(  2K_{n}\left(  \mathcal{D}%
,\mathcal{B}\right)  +1\right)  \right]  d_{n}\left(  T,\mathcal{A}%
,\mathcal{B}\right)  .
\]
It follows that%
\[
K_{n}\left(  \mathcal{A},\mathcal{B}\right)  \leq K_{n}\left(  \mathcal{D}%
,\mathcal{B}\right)  +K_{n}\left(  \mathcal{A},\mathcal{D}\right)  \left(
2K_{n}\left(  \mathcal{D},\mathcal{B}\right)  +1\right)
\]

\end{proof}

\bigskip

\bigskip

We now consider the metric approximate normality for direct limits.

\begin{lemma}
\label{andl}Suppose $\mathcal{A}$ is a unital C*-subalgebra of a unital
C*-algebra $\mathcal{B}$ and $\left \{  \mathcal{A}_{i}:i\in I\right \}  $ is an
increasingly directed family of C*-subalgebras of $\mathcal{A}$. If
$\mathcal{A}$ is the norm closure of $\cup_{i\in I}\mathcal{A}_{i}$, then%
\[
K_{an}\left(  \mathcal{A},\mathcal{B}\right)  \leq \liminf_{i}K_{an}\left(
\mathcal{A}_{i},\mathcal{B}\right)  \text{.}%
\]

\end{lemma}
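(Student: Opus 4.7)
The plan is to pass to the limit in the per-level inequality $dist\left(T,\mathcal{A}_{i}\right)\leq K_{an}\left(\mathcal{A}_{i},\mathcal{B}\right)d_{an}\left(T,\mathcal{A}_{i},\mathcal{B}\right)$ along a cofinal subnet realizing $\liminf_{i}K_{an}\left(\mathcal{A}_{i},\mathcal{B}\right)$. Fix $T\in\mathcal{B}$ and set $K=\liminf_{i}K_{an}\left(\mathcal{A}_{i},\mathcal{B}\right)$; the case $K=\infty$ is trivial, so assume $K<\infty$. The target inequality is $dist\left(T,\mathcal{A}\right)\leq K\cdot d_{an}\left(T,\mathcal{A},\mathcal{B}\right)$, which is exactly $K_{an}\left(\mathcal{A},\mathcal{B}\right)\leq K$.

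Two monotonicity facts are essentially free. Since each $\mathcal{A}_{i}\subseteq\mathcal{A}$ and $\mathcal{A}$ is the norm closure of $\cup_{i}\mathcal{A}_{i}$, the net $dist\left(T,\mathcal{A}_{i}\right)$ decreases to $dist\left(T,\mathcal{A}\right)$. Since any net of contractions in $\mathcal{B}$ that asymptotically commutes with $\mathcal{A}$ automatically asymptotically commutes with each smaller $\mathcal{A}_{i}$, we also have $d_{an}\left(T,\mathcal{A}_{i},\mathcal{B}\right)\geq d_{an}\left(T,\mathcal{A},\mathcal{B}\right)$, with the left side decreasing in $i$.

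The key step is the matching reverse bound $\lim_{i}d_{an}\left(T,\mathcal{A}_{i},\mathcal{B}\right)\leq d_{an}\left(T,\mathcal{A},\mathcal{B}\right)$, which I would establish by a diagonal-net construction. If the decreasing limit strictly exceeded $d_{an}\left(T,\mathcal{A},\mathcal{B}\right)$ by some $\alpha>0$, then for every $i$ one could select a net $\left\{W^{(i)}_{\mu}\right\}$ of contractions in $\mathcal{B}$ asymptotically commuting with $\mathcal{A}_{i}$ with $\limsup_{\mu}\left\Vert W^{(i)}_{\mu}T-TW^{(i)}_{\mu}\right\Vert\geq d_{an}\left(T,\mathcal{A},\mathcal{B}\right)+2\alpha/3$. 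For each triple $\left(i,F,\delta\right)$ with $F\subseteq\mathcal{A}_{i}$ finite and $\delta>0$ I would then extract a single $V_{i,F,\delta}$ from that net satisfying $\left\Vert V_{i,F,\delta}A-AV_{i,F,\delta}\right\Vert<\delta$ for $A\in F$ and $\left\Vert V_{i,F,\delta}T-TV_{i,F,\delta}\right\Vert>d_{an}\left(T,\mathcal{A},\mathcal{B}\right)+\alpha/2$. Ordering triples by $i\leq i'$, $F\subseteq F'$, $\delta\geq\delta'$, and using the norm density of $\cup_{i}\mathcal{A}_{i}$ in $\mathcal{A}$ to absorb any $A\in\mathcal{A}$ into some eventual $F$ up to an arbitrarily small error, this diagonal net asymptotically commutes with every element of $\mathcal{A}$ yet exhibits $\limsup\left\Vert VT-TV\right\Vert\geq d_{an}\left(T,\mathcal{A},\mathcal{B}\right)+\alpha/2$, contradicting the definition of $d_{an}\left(T,\mathcal{A},\mathcal{B}\right)$.

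With the key lemma in hand, choose a cofinal subnet $\left\{i_{j}\right\}$ with $K_{an}\left(\mathcal{A}_{i_{j}},\mathcal{B}\right)\to K$; along the same subnet, $d_{an}\left(T,\mathcal{A}_{i_{j}},\mathcal{B}\right)\to d_{an}\left(T,\mathcal{A},\mathcal{B}\right)$ and $dist\left(T,\mathcal{A}_{i_{j}}\right)\to dist\left(T,\mathcal{A}\right)$. Since the $d_{an}$ factor actually converges (and the $K_{an}$ factor is bounded along the subnet because $K<\infty$), the product $K_{an}\left(\mathcal{A}_{i_{j}},\mathcal{B}\right)d_{an}\left(T,\mathcal{A}_{i_{j}},\mathcal{B}\right)$ tends to $K\cdot d_{an}\left(T,\mathcal{A},\mathcal{B}\right)$, and passing to the limit in the per-level inequality yields the desired $dist\left(T,\mathcal{A}\right)\leq K\cdot d_{an}\left(T,\mathcal{A},\mathcal{B}\right)$. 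The main obstacle is the diagonal construction in the preceding paragraph, which has to juggle three approximation axes simultaneously—the index $i$, finite test sets $F\subseteq\mathcal{A}_{i}$, and the tolerance $\delta$—in order to enforce eventual commutation with every element of $\mathcal{A}$ while preserving the strict lower bound on $\left\Vert VT-TV\right\Vert$ inherited from the individual nets $\left\{W^{(i)}_{\mu}\right\}$.
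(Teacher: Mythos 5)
Your proposal is correct and is essentially the paper's own argument: both rest on the same diagonal net indexed by pairs (finite subset, tolerance), extracting from each level-$i$ witnessing net a single contraction that nearly commutes with the finite set while keeping a large commutator with $T$, and using the norm density of $\cup_{i}\mathcal{A}_{i}$ in $\mathcal{A}$ to upgrade asymptotic commutation with the $\mathcal{A}_{i}$'s to asymptotic commutation with all of $\mathcal{A}$. The only difference is packaging: you isolate the continuity statement $d_{an}\left(T,\mathcal{A}_{i},\mathcal{B}\right)\downarrow d_{an}\left(T,\mathcal{A},\mathcal{B}\right)$ as a lemma proved by contradiction and then pass to a subnet realizing $\liminf_{i}K_{an}\left(\mathcal{A}_{i},\mathcal{B}\right)$, while the paper builds the same diagonal net directly inside the distance estimate.
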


\begin{proof}
Suppose $T\in \mathcal{B}$, $F\subseteq \mathcal{A}$ is finite, $\varepsilon>0$,
and let $\lambda=\left(  F,\varepsilon \right)  $. Then
\[
dist\left(  T,\mathcal{A}\right)  =\lim_{i}dist\left(  T,\mathcal{A}%
_{i}\right)  \leq
\]%
\[
\sup_{i}K_{an}\left(  T,\mathcal{A}_{i}\right)  \liminf_{i}d_{n}\left(
T,\mathcal{A}_{i},\mathcal{B}\right)  .
\]
We can choose $i_{0}$ sufficiently large so that there is a map $\alpha
:F\rightarrow \mathcal{A}_{i_{0}}$ such that
\[
\left \Vert A-\alpha \left(  A\right)  \right \Vert <\varepsilon/2
\]
for every $A\in F$ and so that
\[
\liminf_{i}d_{an}\left(  T,\mathcal{A}_{i},\mathcal{B}\right)  \leq
d_{an}\left(  T,\mathcal{A}_{i_{0}},\mathcal{B}\right)  +\varepsilon.
\]
We can choose a unitary $U_{\lambda}$ in $\mathcal{B}$ so that%
\[
\left \Vert U_{\lambda}\alpha \left(  A\right)  -\alpha \left(  A\right)
U_{\lambda}\right \Vert <\varepsilon/3
\]
for every $A\in F$ and so that%
\[
d_{an}\left(  T,\mathcal{A}_{i_{0}},\mathcal{B}\right)  \leq \left \Vert
U_{\lambda}T-TU_{\lambda}\right \Vert +\varepsilon.
\]
It follows that%
\[
dist\left(  T,\mathcal{A}\right)  \leq \left[  \left \Vert U_{\lambda
}T-TU_{\lambda}\right \Vert +2\varepsilon \right]  \sup_{i}K_{an}\left(
\mathcal{A}_{i},\mathcal{B}\right)  .
\]
and
\[
\left \Vert U_{\lambda}A-AU_{\lambda}\right \Vert \leq \varepsilon.
\]
If we let $\Lambda$ be the set of all pairs $\lambda=\left(  F,\varepsilon
\right)  $ directed by $\left(  \subseteq,\geq \right)  $, we see that
$\left \{  U_{\lambda}\right \}  $ is a net such that%
\[
\left \Vert AU_{\lambda}-U_{\lambda}A\right \Vert \rightarrow0
\]
for every $A\in \mathcal{A}$ and such that%
\[
dist\left(  T,\mathcal{A}\right)  \leq \left[  \lim_{\lambda}\left \Vert
U_{\lambda}T-TU_{\lambda}\right \Vert \right]  \sup_{i}K_{an}\left(
\mathcal{A}_{i},\mathcal{B}\right)  \leq d_{an}\left(  T,\mathcal{A}%
,\mathcal{B}\right)  \sup_{i}K_{an}\left(  \mathcal{A}_{i},\mathcal{B}\right)
.
\]
Hence $K_{an}\left(  \mathcal{A},\mathcal{B}\right)  \leq \sup_{i}K_{an}\left(
\mathcal{A}_{i},\mathcal{B}\right)  $, and since the same holds for when we
restrict to the set $\left \{  \mathcal{A}_{i}:i\geq j\right \}  $ for some $j,$
we can replace $\sup_{i}K_{an}\left(  \mathcal{A}_{i},\mathcal{B}\right)  $
with $\liminf_{i}K_{an}\left(  \mathcal{A}_{i},\mathcal{B}\right)  $.
\end{proof}

\bigskip

We now want to extend a key result in \cite{H4}. Recall from \cite{H4} that a
unital C*-algebra $\mathcal{B}$ is \emph{centrally prime} if, whenever $0\leq
x,y\leq1$ are in $\mathcal{B}$ and $x\mathcal{B}y=\left \{  0\right \}  $, then
there is an $e\in \mathcal{Z}\left(  \mathcal{B}\right)  $ such that $x\leq
e\leq1$ and $y\leq1-e$. \ 

The following result is a generalization of \cite[Theorem 1]{H4}, in which
$\mathcal{W}=\mathbb{C}$. That result required S. Machado's metric version
\cite{M} of the Bishop-Stone-Weierstrass theorem \cite{B}. Here we require
Machado's vector version of his result \cite{M} (See \cite{R} for an beautiful
elementary proof.)\bigskip

\begin{proposition}
\label{Macapp}Suppose $\mathcal{A}\subseteq \mathcal{D}$ are unital commutative
C*-algebras, $\mathcal{W}$ is a unital C*-algebra, $\mathcal{B}$ is a
centrally prime unital C*-algebra such that

\begin{enumerate}
\item $\mathcal{A}\otimes \mathcal{W}\subseteq \mathcal{D}\otimes \mathcal{W}%
\subseteq \mathcal{B\otimes}_{\text{\textrm{min}}}\mathcal{W},$

\item $\mathcal{Z}\left(  \mathcal{B\otimes}_{\text{\textrm{min}}}%
\mathcal{W}\right)  \subseteq \mathcal{A\otimes W}$.
\end{enumerate}

Then, for every $T\in \mathcal{D}\otimes \mathcal{W}$,%
\[
dist\left(  T,\mathcal{A\otimes W}\right)  \leq d_{an}\left(
T,\mathcal{A\otimes W},\mathcal{B}\otimes_{\text{\textrm{min}}}\mathcal{W}%
\right)  .
\]

\end{proposition}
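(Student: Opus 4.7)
My plan is to translate the problem to a function-space picture via Gelfand duality, apply the vector-valued Machado--Bishop--Stone--Weierstrass theorem to locate a minimal antisymmetric set where the distance is witnessed, and then use central primeness of $\mathcal{B}$ to manufacture an asymptotically commuting net in $\mathcal{B}\otimes_{\mathrm{min}}\mathcal{W}$.

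Via Gelfand duality write $\mathcal{A}=C(X)$ and $\mathcal{D}=C(Y)$; the inclusion $\mathcal{A}\subseteq \mathcal{D}$ corresponds to a continuous surjection $q\colon Y\twoheadrightarrow X$, and $\mathcal{A}\otimes\mathcal{W}\subseteq \mathcal{D}\otimes\mathcal{W}$ becomes $C(X,\mathcal{W})\hookrightarrow C(Y,\mathcal{W})$ via $f\mapsto f\circ q$, realizing $\mathcal{A}\otimes\mathcal{W}$ as the closed $C(X)$-submodule of those $\mathcal{W}$-valued continuous functions on $Y$ that factor through $q$. Set $d=\mathrm{dist}(T,\mathcal{A}\otimes \mathcal{W})$. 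Machado's vector-valued theorem applied to this submodule then produces a minimal closed $\mathcal{A}$-antisymmetric subset $E\subseteq q^{-1}(x_{0})$ for some $x_{0}\in X$ with
$$
d=\mathrm{dist}\bigl(T|_{E},\ \mathcal{W}\bigr),
$$
where $\mathcal{W}$ is identified with the space of constant $\mathcal{W}$-valued functions on $E$.

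The core step, and main obstacle, is the following: for every finite $F\subseteq \mathcal{A}\otimes\mathcal{W}$ and every $\varepsilon>0$, construct a contraction $V\in\mathcal{B}\otimes_{\mathrm{min}}\mathcal{W}$ satisfying $\|AV-VA\|<\varepsilon$ for each $A\in F$ and $\|VT-TV\|\geq d-\varepsilon$. Guided by the minimality of $E$, I would select two small disjoint closed subsets of $Y$ meeting $E$ on which $T$ is discriminated by essentially $d$, and let $p_{1},p_{2}\in \mathcal{A}$ be positive contractions supported in them with $p_{1}p_{2}=0$. If some central projection $e\in\mathcal{Z}(\mathcal{B}\otimes_{\mathrm{min}}\mathcal{W})$ satisfied $p_{1}\otimes 1\leq e\leq 1-p_{2}\otimes 1$, then the centrality hypothesis would force $e\in \mathcal{A}\otimes\mathcal{W}$, producing a decomposition of $E$ into two $\mathcal{A}$-antisymmetric pieces and contradicting minimality. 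Hence central primeness of $\mathcal{B}\otimes_{\mathrm{min}}\mathcal{W}$ (inherited from $\mathcal{B}$) yields $b\in\mathcal{B}\otimes_{\mathrm{min}}\mathcal{W}$ with $(p_{1}\otimes 1)\,b\,(p_{2}\otimes 1)\neq 0$. Normalizing and taking the polar decomposition of this element gives the desired $V$: the essentially disjoint supports of $p_{1},p_{2}$ relative to $\mathcal{A}\otimes \mathcal{W}$ yield the commutator bound on $F$, while the $d$-sized variation of $T$ between the two supports yields $\|VT-TV\|\geq d-\varepsilon$. Indexing the pairs $(F,\varepsilon)$ in the standard way produces the required net, hence $d\leq d_{an}(T,\mathcal{A}\otimes \mathcal{W},\mathcal{B}\otimes_{\mathrm{min}}\mathcal{W})$.

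The hardest point is the interplay between the geometric content of Machado's theorem (minimality of $E$) and the order-theoretic content of central primeness (extracting $b$ from the nonvanishing corner): one must arrange $p_{1},p_{2}$ so that the Chebyshev-radius statement for $T|_{E}$ can actually be converted into a sharp operator commutator bound with $T$, and one must verify that the resulting $V$ approximately commutes with all of $F$, not just with $p_{1}$ and $p_{2}$. This is the vector-valued analogue of the scalar argument in \cite[Theorem 1]{H4}, and it is precisely Machado's vector Bishop--Stone--Weierstrass theorem, in place of its scalar version, that bridges the gap when $\mathcal{W}\neq \mathbb{C}$.
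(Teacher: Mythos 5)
Your high-level plan coincides with the paper's: pass to the $C(\cdot,\mathcal{W})$ picture, use Machado's vector theorem to find a closed $\mathcal{A}$-antisymmetric set $E$ on which the distance is computed against the constants $\mathcal{W}$, and then use central primeness to manufacture an asymptotically commuting net witnessing that distance. But the core construction, which you yourself single out as the main obstacle, does not work as written. First, the cutoffs $p_{1},p_{2}$ cannot lie in $\mathcal{A}$: since $\mathcal{A}=\mathcal{A}^{\ast}$ and $E$ is $\mathcal{A}$-antisymmetric, every element of $\mathcal{A}$ is constant on $E$, so no element of $\mathcal{A}$ can be supported near one point of $E$ while vanishing near another; the bump functions must be taken from $\mathcal{D}$ (the paper takes $h_{\lambda},s_{\lambda}\in C(X)=\mathcal{D}$, localized at two points $\alpha,\beta\in E$ chosen to realize the diameter of $T(E)$, which dominates the Chebyshev radius $d$ --- a step you defer). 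Second, your claim that central primeness of $\mathcal{B}\otimes_{\mathrm{min}}\mathcal{W}$ is ``inherited from $\mathcal{B}$'' is unjustified and false in general: $\mathcal{B}=\mathbb{C}$ is centrally prime, while $\mathbb{C}\otimes\mathcal{W}=\mathcal{W}$ is an arbitrary unital C*-algebra, e.g.\ the paper's example $C^{\ast}(S^{\ast}\oplus S)$, which is not centrally prime.

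Third, and most seriously, even granting such a $b\in\mathcal{B}\otimes_{\mathrm{min}}\mathcal{W}$, the element $V=(p_{1}\otimes1)\,b\,(p_{2}\otimes1)$ does neither of the two jobs. Because $V$ is localized near $\alpha$ and $\beta$, its commutator with $F\in C(X,\mathcal{W})$ is asymptotically $(1\otimes F(\alpha))V-V(1\otimes F(\beta))$; if $V$ has a nontrivial $\mathcal{W}$-leg, say $V\approx q\otimes w$, this is $q\otimes\left(F(\alpha)w-wF(\beta)\right)$, which need not be small for $F\in\mathcal{A}\otimes\mathcal{W}$ (there $F(\alpha)=F(\beta)$, but it need not commute with $w$, so your net may fail to qualify in the definition of $d_{an}$), and need not be bounded below by $\Vert T(\alpha)-T(\beta)\Vert$ for $F=T$: with $\mathcal{W}=\mathcal{M}_{2}(\mathbb{C})$, $T(\alpha)=e_{11}$, $T(\beta)=e_{22}$, $w=e_{12}$, the commutator is asymptotically $0$ although $\Vert T(\alpha)-T(\beta)\Vert=1$. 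The point of the paper's argument is precisely to force $V\in\mathcal{B}\otimes1$: apply central primeness of $\mathcal{B}$ itself to the scalar cutoffs $h,s\in\mathcal{D}\subseteq\mathcal{B}$; if $h\mathcal{B}s=\{0\}$ one gets a central $e\in\mathcal{Z}(\mathcal{B})$ with $h\leq e\leq1-s$, and $e\otimes1\in\mathcal{Z}(\mathcal{B}\otimes_{\mathrm{min}}\mathcal{W})\subseteq\mathcal{A}\otimes\mathcal{W}$ is constant on $E$, contradicting $e(\alpha)\geq1$, $e(\beta)\leq0$ (no minimality of $E$ is needed). Taking a norm-one $q\in h\mathcal{B}s$ and $V=q\otimes1$, the commutator with $F$ is asymptotically $q\otimes\left(F(\alpha)-F(\beta)\right)$, which tends to $0$ for $F\in\mathcal{A}\otimes\mathcal{W}$ and has norm $\Vert T(\alpha)-T(\beta)\Vert\geq d$ for $F=T$. (Also, polar decompositions need not exist inside a C*-algebra; normalization alone suffices.)
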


\begin{proof}
We can view $\mathcal{D}=C\left(  X\right)  $ for some compact Hausdorff space
$X$ and we can view $\mathcal{D\otimes W}$ as $C\left(  X,\mathcal{W}\right)
,$ the C*-algebra of continuous functions from $X$ to $\mathcal{W}$. We can
write $T=f\in C\left(  X,\mathcal{W}\right)  $. It follows that $\mathcal{A}%
\otimes \mathcal{W}$ is a C*-subalgebra of $C\left(  X,\mathcal{W}\right)  $
that is an $\mathcal{A}$-module. It follows from Machado's theorem \cite{M},
that there is a closed $\mathcal{A}$-antisymmetric subset $E\subseteq X$ such
that%
\[
dist\left(  f,\mathcal{A}\otimes \mathcal{W}\right)  =dist\left(
f|_{E},\left(  \mathcal{A}\otimes \mathcal{W}\right)  |_{E}\right)  .
\]
However, since $\mathcal{A=A}^{\ast}$ and $E$ is $\mathcal{A}$-antisymmetric,
we see that every function in $\mathcal{A}$ is constant on $E$. Hence, if
$u\in \mathcal{A}$ and $w\in \mathcal{W}$ we have, for every $x\in X$ that
$\left(  u\otimes w\right)  \left(  x\right)  =u\left(  x\right)  w$. Hence,
every function in $\mathcal{A}\otimes \mathcal{W}$ is constant on $E$. Thus%
\[
dist\left(  f|_{E},\left(  \mathcal{A}\otimes \mathcal{W}\right)  |_{E}\right)
=\inf \left \{  \left \Vert f|_{E}-h\right \Vert :h:E\rightarrow \mathcal{W},\text{
}h\text{ is constant}\right \}  =
\]%
\[
\inf_{w\in \mathcal{W}}\sup_{x\in E}\left \Vert f\left(  x\right)  -w\right \Vert
.
\]
Since $E$ is compact, we can choose $\alpha,\beta \in E$ such that%
\[
\left \Vert f\left(  \alpha \right)  -f\left(  \beta \right)  \right \Vert
=\sup_{x,y\in E}\left \Vert f\left(  x\right)  -f\left(  y\right)  \right \Vert
.
\]
If we let $w=f\left(  \beta \right)  ,$ we see that%
\[
dist\left(  f|_{E},\left(  \mathcal{A}\otimes \mathcal{W}\right)  |_{E}\right)
\leq \sup_{x\in E}\left \Vert f\left(  x\right)  -f\left(  \beta \right)
\right \Vert =\left \Vert f\left(  \alpha \right)  -f\left(  \beta \right)
\right \Vert \text{.}%
\]
Hence,%
\[
dist\left(  f,\mathcal{A}\otimes \mathcal{W}\right)  \leq \left \Vert f\left(
\beta \right)  -f\left(  \alpha \right)  \right \Vert .
\]

If $f\left(  \alpha \right)  =f\left(  \beta \right)  ,$ then $T=f\in
\mathcal{A}\otimes \mathcal{W}$ and the desired inequality holds. Hence we can
assume $\alpha \neq \beta$. Let $\Lambda$ be the set of pairs $\left(
U,V\right)  $, we $U$ and $V$ are disjoint open subsets of $X$ such that
$\alpha \in U$ and $\beta \in V$. Suppose $\lambda=\left(  U,V\right)
\in \Lambda$. We can define $g_{\lambda},h_{\lambda},r_{\lambda},s_{\lambda}\in
C\left(  X\right)  $ such that

\begin{enumerate}
\item $0\leq g_{\lambda},h_{\lambda},r_{\lambda},s_{\lambda}\leq1$

\item $g_{\lambda}\left(  \alpha \right)  =h_{\lambda}\left(  \alpha \right)
=1,$ $g_{\lambda}h_{\lambda}=h_{\lambda}$, $g_{\lambda}|_{X\backslash
U_{\lambda}}=0$

\item $r_{\lambda}\left(  \beta \right)  =s_{\lambda}\left(  \beta \right)  =1,$
$r_{\lambda}s_{\lambda}=s_{\lambda},$ $r_{\lambda}|_{X\backslash V}=0.$
\end{enumerate}

We then have, for every $F\in C\left(  X,\mathcal{W}\right)  $

\begin{enumerate}
\item[4.] $h_{\lambda}F=Fh_{\lambda}$ and $\left \Vert h_{\lambda}F-\left(
1\otimes F\left(  \alpha \right)  \right)  h_{\lambda}\right \Vert \rightarrow0$

\item[5.] $s_{\lambda}F=Fs_{\lambda}$ and $\left \Vert s_{\lambda}F-\left(
1\otimes F\left(  \beta \right)  \right)  s_{\lambda}\right \Vert \rightarrow0.$
\end{enumerate}

Claim: $h_{\lambda}\left(  \mathcal{B\otimes}1\right)  s_{\lambda}=\left(
h_{\lambda}\mathcal{B}s_{\lambda}\right)  \otimes1\neq \left \{  0\right \}  .$
Since $\mathcal{B}$ is centrally prime, $h_{\lambda}\mathcal{B}s_{\lambda
}=\left \{  0\right \}  $ implies that there is an $e\in \mathcal{Z}\left(
\mathcal{B}\right)  \subseteq \mathcal{A}$ such that $h_{\lambda}\leq e\leq1$
and $s_{\lambda}\leq1-e\leq1$. Thus $1\leq e\left(  \alpha \right)  $ and
$0\leq e\left(  \beta \right)  ,$ which contradicts $e\left(  \alpha \right)
=e\left(  \beta \right)  $. This proves the claim.

For each $\lambda \in \Lambda$ we can choose $Q_{\lambda}\in h_{\lambda
}\mathcal{B}s_{\lambda}\otimes1$ with $\left \Vert Q_{\lambda}\right \Vert =1$.
We then have, for every $F\in C\left(  X,\mathcal{W}\right)  $%
\[
\left \Vert \left[  FQ_{\lambda}-Q_{\lambda}F\right]  -\left[  \left(  1\otimes
F\left(  \alpha \right)  \right)  Q_{\lambda}-Q_{\lambda}\left(  1\otimes
F\left(  \beta \right)  \right)  \right]  \right \Vert \rightarrow0,
\]
so
\[
\left \vert \left \Vert FQ_{\lambda}-Q_{\lambda}F\right \Vert -\left \Vert \left(
1\otimes F\left(  \alpha \right)  \right)  Q_{\lambda}-Q_{\lambda}\left(
1\otimes F\left(  \beta \right)  \right)  \right \Vert \right \vert
\rightarrow0.
\]
However,%
\[
\left(  1\otimes F\left(  \alpha \right)  \right)  Q_{\lambda}=Q_{\lambda
}\otimes F\left(  \alpha \right)  ,\text{ and }Q_{\lambda}\left(  1\otimes
F\left(  \beta \right)  \right)  =Q_{\lambda}\otimes F\left(  \beta \right)  .
\]
Hence, $\left \Vert FQ_{\lambda}-Q_{\lambda}F\right \Vert \rightarrow0$ for
every $F\in \mathcal{A\otimes W}$ and
\[
\lim_{\lambda}\left \Vert fQ_{\lambda}-Q_{\lambda}f\right \Vert =\lim_{\lambda
}\left \Vert Q_{\lambda}\otimes \left(  f\left(  \beta \right)  -f\left(
\alpha \right)  \right)  \right \Vert =
\]%
\[
\left \Vert f\left(  \beta \right)  -f\left(  \alpha \right)  \right \Vert \geq
dist\left(  f,\mathcal{A}\otimes \mathcal{W}\right)  .
\]

\end{proof}

\bigskip

\begin{corollary}
Suppose $\mathcal{A}$ is a commutative unital C*-subalgebra of a centrally
prime unital C*-algebra $\mathcal{B}$ such that $\mathcal{Z}\left(
\mathcal{B}\right)  \subseteq \mathcal{A}$ and $\mathcal{W}$ is any unital
C*-algebra. Then $\mathcal{A}\otimes \mathcal{W}$ is approximately normal in
$\mathcal{B}\otimes_{\text{\textrm{min}}}\mathcal{W}$.
\end{corollary}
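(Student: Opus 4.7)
The containment $\mathcal{A}\otimes \mathcal{W}\subseteq \mathrm{Appr}(\mathcal{A}\otimes \mathcal{W},\mathcal{B}\otimes_{\mathrm{min}}\mathcal{W})''$ is immediate from the definition. For the reverse inclusion, the plan is to take $T$ in the approximate double commutant, slice $T$ by functionals on $\mathcal{W}$ to reduce to the scalar case $\mathcal{W}=\mathbb{C}$, and then reassemble $T$ via a conditional expectation together with the separating property of slice maps.

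I would first show that for every $\phi\in \mathcal{W}^{*}$, the slice $T_{\phi}:=(\mathrm{id}\otimes \phi)(T)\in \mathcal{B}$ lies in $\mathrm{Appr}(\mathcal{A},\mathcal{B})''$. Given a bounded net $\{A_{\lambda}\}\subseteq \mathcal{B}$ with $\|A_{\lambda}A-AA_{\lambda}\|\to 0$ for every $A\in \mathcal{A}$, the lift $\tilde{A}_{\lambda}:=A_{\lambda}\otimes 1\in \mathcal{B}\otimes_{\mathrm{min}}\mathcal{W}$ is bounded, and checking on elementary tensors combined with uniform boundedness shows that $\tilde{A}_{\lambda}$ asymptotically commutes with every $S\in \mathcal{A}\otimes \mathcal{W}$. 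Hence $\|T\tilde{A}_{\lambda}-\tilde{A}_{\lambda}T\|\to 0$, and applying the bounded slice map $\mathrm{id}\otimes \phi$ transports this to $\|T_{\phi}A_{\lambda}-A_{\lambda}T_{\phi}\|\to 0$. By the scalar case of the corollary---namely that $\mathcal{A}$ is approximately normal in $\mathcal{B}$, which is \cite[Theorem 1]{H4} and is also recoverable from Proposition~\ref{Macapp} via $\mathcal{D}=C^{*}(\mathcal{A},X)$ for selfadjoint $X\in \mathrm{Appr}(\mathcal{A},\mathcal{B})''$ (the constant-net trick using commutativity of $\mathcal{A}$ forces $X$ to commute with $\mathcal{A}$, so $\mathcal{D}$ is commutative), together with a selfadjoint/antiselfadjoint decomposition---it follows that $T_{\phi}\in \mathcal{A}$ for every $\phi\in \mathcal{W}^{*}$.

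To reassemble, take a conditional expectation $E:\mathcal{B}\to \mathcal{A}$ and consider the induced conditional expectation $E\otimes \mathrm{id}:\mathcal{B}\otimes_{\mathrm{min}}\mathcal{W}\to \mathcal{A}\otimes \mathcal{W}$. Since $E(T_{\phi})=T_{\phi}$ for each $\phi$, one has $(\mathrm{id}\otimes \phi)\bigl(T-(E\otimes \mathrm{id})(T)\bigr)=0$ for every $\phi\in \mathcal{W}^{*}$; because the slice maps $\mathrm{id}\otimes \phi$ separate elements of the minimal tensor product (via the faithfulness of $\pi_{\mathcal{B}}\otimes \pi_{\mathcal{W}}$ for faithful representations), this forces $T=(E\otimes \mathrm{id})(T)\in \mathcal{A}\otimes \mathcal{W}$. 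The main delicate point is securing the conditional expectation $E$: for an arbitrary commutative unital $\mathcal{A}\subseteq \mathcal{B}$ a CE need not exist, so one must either exploit the centrally prime structure of $\mathcal{B}$ together with the inclusion $\mathcal{Z}(\mathcal{B})\subseteq \mathcal{A}$ to construct $E$, or replace this reconstruction by a direct argument combining the slice-map separation property with the commutativity of $\mathcal{A}$.
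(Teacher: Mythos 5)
Your first two steps are fine: the module property of the slice maps $\mathrm{id}\otimes\phi$ does show that every slice $T_{\phi}$ of an element $T\in Appr\left(\mathcal{A}\otimes\mathcal{W},\mathcal{B}\otimes_{\text{\textrm{min}}}\mathcal{W}\right)^{\prime\prime}$ lies in $Appr\left(\mathcal{A},\mathcal{B}\right)^{\prime\prime}$, and the scalar case ($T_{\phi}\in\mathcal{A}$) is indeed \cite[Theorem 1]{H4}, recoverable from Proposition \ref{Macapp} as you indicate. The genuine gap is the reassembly step, and it is not a removable technicality. Your primary route needs a conditional expectation $E:\mathcal{B}\rightarrow\mathcal{A}$, which in the generality of the corollary simply need not exist: take $\mathcal{B}=B\left(\ell^{2}\right)$ (primitive, hence centrally prime, with $\mathcal{Z}\left(\mathcal{B}\right)=\mathbb{C}1$) and $\mathcal{A}$ the commutative unital algebra of diagonal operators with convergent diagonal (a copy of $c$). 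A conditional expectation onto $\mathcal{A}$ would restrict to a bounded projection of $\ell^{\infty}$ (the diagonal masa) onto $c$, and no such projection exists by Phillips' theorem; so this instance, squarely covered by the corollary, defeats the construction. Your fallback --- deduce $T\in\mathcal{A}\otimes_{\text{\textrm{min}}}\mathcal{W}$ directly from the fact that all slices $T_{\phi}$ lie in $\mathcal{A}$ --- is exactly the slice-map (Fubini product) property for the triple $\left(\mathcal{A},\mathcal{W},\mathcal{B}\right)$. That property is a nontrivial statement tied to exactness of $\mathcal{W}$ and is not valid for arbitrary unital $\mathcal{W}$ without extra structure (such as the very conditional expectation you lack); since the corollary allows every unital $\mathcal{W}$, you cannot invoke it, and you give no argument for it. So the proof is incomplete precisely at the point where the tensor factor $\mathcal{W}$ has to be controlled.

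For contrast, the paper does not slice at all: it enlarges $\mathcal{A}$ to a masa $\mathcal{D}\subseteq\mathcal{B}$, observes that $\mathcal{D}\otimes\mathcal{W}$ is normal in $\mathcal{B}\otimes_{\text{\textrm{min}}}\mathcal{W}$, so that any $T$ in the approximate double commutant of $\mathcal{A}\otimes\mathcal{W}$ already lies in $\mathcal{D}\otimes\mathcal{W}$, and then applies Proposition \ref{Macapp} --- the vector-valued Machado estimate in $C\left(X,\mathcal{W}\right)$ --- to get
\[
dist\left(T,\mathcal{A}\otimes\mathcal{W}\right)\leq d_{an}\left(T,\mathcal{A}\otimes\mathcal{W},\mathcal{B}\otimes_{\text{\textrm{min}}}\mathcal{W}\right)=0.
\]
In other words, the $\mathcal{W}$-valued difficulty is absorbed by the vector-valued Bishop--Stone--Weierstrass machinery rather than by slicing and reassembling; using Proposition \ref{Macapp} only in its scalar form, as you do, discards the very feature of that proposition that makes the corollary go through.
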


\begin{proof}
Suppose $\mathcal{D}\subseteq \mathcal{B}$ is a masa in $\mathcal{B}$ that
contains $\mathcal{A}$. It follows that $\left(  \mathcal{D}\otimes
\mathcal{W},\mathcal{B}\otimes_{\text{\textrm{min}}}\mathcal{W}\right)
^{\prime}=\mathcal{D}\otimes \mathcal{Z}\left(  \mathcal{W}\right)  ,$ and
$\left(  \mathcal{D}\otimes \mathcal{W},\mathcal{B}\otimes_{\text{\textrm{min}%
}}\mathcal{W}\right)  ^{\prime \prime}=\mathcal{D}\otimes \mathcal{W}$. Hence
$\mathcal{D}\otimes \mathcal{W}$ is normal in $\mathcal{B}\otimes
_{\text{\textrm{min}}}\mathcal{W}$. Hence, if $T\in Appr\left(  \mathcal{A}%
\otimes \mathcal{W},\mathcal{B}\otimes_{\text{\textrm{min}}}\mathcal{W}\right)
^{\prime \prime}$, then $T\in \mathcal{D}\otimes \mathcal{W}$, and it follows
from Proposition \ref{Macapp} that $T\in \mathcal{A}\otimes \mathcal{W}$.
\end{proof}

\bigskip

\begin{corollary}
Suppose $\mathcal{A}$ is a commutative unital C*-subalgebra of a von Neumann
algebra $\mathcal{B}$ and $\mathcal{Z}\left(  \mathcal{B}\right)
\subseteq \mathcal{A}$, and $\mathcal{W}$ is any unital C*-algebra. Then
$\mathcal{A\otimes_{\text{\textrm{min}}}W}$ is metric approximately normal in
$\mathcal{B\otimes_{\text{\textrm{min}}}W}$ and
\[
K_{an}\left(  \mathcal{A\otimes_{\text{\textrm{min}}}W},\mathcal{B\otimes
_{\text{\textrm{min}}}W}\right)  \leq4.
\]

\end{corollary}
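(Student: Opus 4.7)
The plan is to interpolate a masa between $\mathcal{A}$ and $\mathcal{B}$ and then combine Proposition \ref{Macapp} with a tensor-product strong injectivity bound in the sandwich style of Lemma \ref{rel}. Since $\mathcal{B}$ is a von Neumann algebra and $\mathcal{A}$ is commutative, choose a masa $\mathcal{D}$ of $\mathcal{B}$ with $\mathcal{A}\subseteq \mathcal{D}$, as in the preceding corollary. I would first verify the hypotheses of Proposition \ref{Macapp} for the commutative pair $\mathcal{A}\subseteq\mathcal{D}$ inside the ambient algebra $\mathcal{B}$. Any von Neumann algebra is centrally prime: if $0\leq x,y\leq 1$ with $x\mathcal{B}y=0$, then the central supports $z_x,z_y$ are orthogonal, and $e=z_x$ satisfies $x\leq z_x\leq 1-z_y\leq 1-y$. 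Moreover $\mathcal{Z}(\mathcal{B}\otimes_{\min}\mathcal{W})=\mathcal{Z}(\mathcal{B})\otimes_{\min}\mathcal{Z}(\mathcal{W})\subseteq \mathcal{A}\otimes\mathcal{W}$ is immediate from the hypothesis $\mathcal{Z}(\mathcal{B})\subseteq \mathcal{A}$. Proposition \ref{Macapp} therefore gives
\[
dist(D,\mathcal{A}\otimes\mathcal{W})\leq d_{an}(D,\mathcal{A}\otimes\mathcal{W},\mathcal{B}\otimes_{\min}\mathcal{W})\quad\text{for every }D\in\mathcal{D}\otimes\mathcal{W}.
\]

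The second ingredient is the metric-normality estimate $K_{an}(\mathcal{D}\otimes_{\min}\mathcal{W},\mathcal{B}\otimes_{\min}\mathcal{W})\leq 1$. By Proposition \ref{inj}(3) the masa $\mathcal{D}$ is weak* injective in $\mathcal{B}$; since weak* convergence in a von Neumann algebra implies WOT convergence under any normal faithful representation, $\mathcal{D}$ is also strongly injective in $\mathcal{B}$. Proposition \ref{inj}(7) then passes strong injectivity to the minimal tensor product $\mathcal{D}\otimes_{\min}\mathcal{W}\subseteq \mathcal{B}\otimes_{\min}\mathcal{W}$, and Proposition \ref{dl} upgrades this to $K_n\leq 1$, whence $K_{an}\leq K_n\leq 1$.

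The third step assembles the two pieces by mimicking the sandwich argument in the proof of Lemma \ref{rel}. Given $T\in\mathcal{B}\otimes_{\min}\mathcal{W}$ and $\varepsilon>0$, the preceding bound together with the monotonicity $d_{an}(T,\mathcal{D}\otimes\mathcal{W},\mathcal{B}\otimes_{\min}\mathcal{W})\leq d_{an}(T,\mathcal{A}\otimes\mathcal{W},\mathcal{B}\otimes_{\min}\mathcal{W})$ (more nets qualify when the commuting subalgebra is smaller) produces $D\in\mathcal{D}\otimes\mathcal{W}$ with $\|T-D\|<d_{an}(T,\mathcal{A}\otimes\mathcal{W},\mathcal{B}\otimes_{\min}\mathcal{W})+\varepsilon$. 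Applying Proposition \ref{Macapp} to $D$ and using the triangle-style bound $d_{an}(D,\mathcal{A}\otimes\mathcal{W},\mathcal{B}\otimes_{\min}\mathcal{W})\leq d_{an}(T,\mathcal{A}\otimes\mathcal{W},\mathcal{B}\otimes_{\min}\mathcal{W})+2\|T-D\|$, I would obtain
\[
dist(T,\mathcal{A}\otimes\mathcal{W})\leq \|T-D\|+dist(D,\mathcal{A}\otimes\mathcal{W})\leq 3\|T-D\|+d_{an}(T,\mathcal{A}\otimes\mathcal{W},\mathcal{B}\otimes_{\min}\mathcal{W})\leq 4\,d_{an}(T,\mathcal{A}\otimes\mathcal{W},\mathcal{B}\otimes_{\min}\mathcal{W})+3\varepsilon,
\]
and sending $\varepsilon\to 0$ gives $K_{an}(\mathcal{A}\otimes_{\min}\mathcal{W},\mathcal{B}\otimes_{\min}\mathcal{W})\leq 4$.

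The main subtlety, and the reason Lemma \ref{rel} cannot be used as a black box, is the asymmetry of Proposition \ref{Macapp}: it bounds $dist(D,\mathcal{A}\otimes\mathcal{W})$ using $d_{an}$ taken inside the larger algebra $\mathcal{B}\otimes_{\min}\mathcal{W}$ rather than inside the smaller $\mathcal{D}\otimes\mathcal{W}$. Consequently it does not directly yield $K_{an}(\mathcal{A}\otimes\mathcal{W},\mathcal{D}\otimes\mathcal{W})\leq 1$, and one must recompute the sandwich estimate by applying Proposition \ref{Macapp} to the interpolating $D$ with the larger seminorm.
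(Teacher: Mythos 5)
Your proposal is correct and follows essentially the same route as the paper: interpolate a masa $\mathcal{D}\supseteq\mathcal{A}$, use strong injectivity of $\mathcal{D}\otimes\mathcal{W}$ in $\mathcal{B}\otimes_{\text{\textrm{min}}}\mathcal{W}$ to produce an element of $\mathcal{D}\otimes\mathcal{W}$ within $d_{an}\left(T,\mathcal{A}\otimes\mathcal{W},\mathcal{B}\otimes_{\text{\textrm{min}}}\mathcal{W}\right)$ of $T$, and then apply Proposition \ref{Macapp} plus the triangle inequality to reach the constant $4$. The only cosmetic difference is that the paper obtains the nearby element directly as $\left(E\otimes1\right)\left(T\right)$, using that the conditional expectation is a point-weak* limit of maps in $\mathcal{F}\left(\mathcal{A}\otimes\mathcal{W},\mathcal{B}\otimes_{\text{\textrm{min}}}\mathcal{W}\right)$, whereas you route through Proposition \ref{dl} and the monotonicity of the seminorms in the subalgebra variable, which is the same mechanism.
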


\begin{proof}
Let $\mathcal{D}$ be a masa in $\mathcal{B}$ that contains $\mathcal{A}$. It
follows from Proposition \ref{inj} that $\mathcal{D}$ is weak*-injective in
$\mathcal{B}$, and that $\mathcal{D}\otimes \mathcal{W}$ is strongly injective
in $\mathcal{B\otimes_{\text{\textrm{min}}}W}$. Suppose $T\in \mathcal{B\otimes
_{\text{\textrm{min}}}W}$. We can assume that $\mathcal{B\subseteq B}\left(
H\right)  $ is a von Neumann algebra and $\mathcal{W}\subseteq B\left(
M\right)  $ and $\mathcal{B\otimes_{\text{\textrm{min}}}W}$ is the spatial
tensor product of $\mathcal{B}$ and $\mathcal{W}$ in $B\left(  H\otimes
M\right)  $. Then there is a net $\left \{  \varphi_{\lambda}\right \}  $ in
$\mathcal{F}\left(  \mathcal{D},\mathcal{B}\right)  $ such that $E\left(
S\right)  =w^{\ast}$-$\lim{}_{\lambda}\varphi_{\lambda}\left(  S\right)  $ is
a conditional expectation from $\mathcal{B}$ to $\mathcal{D}$. Then
$E\otimes1:\mathcal{B\otimes_{\text{\textrm{min}}}W\rightarrow D}%
\otimes \mathcal{W}$ defined, for every $R$ in $B\otimes W$, by
\[
\left(  E\otimes1\right)  \left(  R\right)  =w^{\ast}\text{-}\lim_{\lambda
}\left(  \varphi_{\lambda}\otimes1\right)  \left(  R\right)
\]
is a conditional expectation and each
\[
\varphi_{\lambda}\otimes1\in \mathcal{F}\left(  \mathcal{D}\otimes
\mathcal{W},\mathcal{B\otimes_{\text{\textrm{min}}}W}\right)  \subseteq
\mathcal{F}\left(  \mathcal{A\otimes W},\mathcal{B\otimes_{\text{\textrm{min}%
}}W}\right)  .
\]
Hence $T_{1}=\left(  E\otimes1\right)  \left(  T\right)  \in B$, where $B$ is
the closed ball in $\mathcal{B\otimes_{\text{\textrm{min}}}W}$ centered at $T$
with radius $d_{n}\left(  T,\mathcal{A}\otimes \mathcal{W},\mathcal{B\otimes
_{\text{\textrm{min}}}W}\right)  $. However, Theorem $\ref{Macapp}$ implies
that%
\[
dist\left(  T_{1},\mathcal{A\otimes W}\right)  \leq d_{an}\left(
T_{1},\mathcal{A\otimes W},\mathcal{B}\otimes_{\text{\textrm{min}}}%
\mathcal{W}\right)  \leq
\]%
\[
\leq d_{an}\left(  T,\mathcal{A\otimes W},\mathcal{B}\otimes
_{\text{\textrm{min}}}\mathcal{W}\right)  +2\left \Vert T-T_{1}\right \Vert \leq
\]%
\[
d_{an}\left(  T,\mathcal{A}\otimes \mathcal{W},\mathcal{B\otimes
_{\text{\textrm{min}}}W}\right)  +2d_{n}\left(  T,\mathcal{A}\otimes
\mathcal{W},\mathcal{B\otimes_{\text{\textrm{min}}}W}\right)  \leq
\]%
\[
3d_{an}\left(  T,\mathcal{A}\otimes \mathcal{W},\mathcal{B\otimes
_{\text{\textrm{min}}}W}\right)  .
\]
Hence
\[
dist\left(  T,\mathcal{A}\otimes \mathcal{W}\right)  \leq dist\left(
T_{1},\mathcal{A}\otimes \mathcal{W}\right)  +\left \Vert T-T_{1}\right \Vert
\leq
\]%
\[
4d_{an}\left(  T,\mathcal{A}\otimes \mathcal{W},\mathcal{B\otimes
_{\text{\textrm{min}}}W}\right)  .
\]

\end{proof}

\bigskip

\bigskip \ 

\begin{theorem}
\label{smallAH}If $\mathcal{B}$ is a unital centrally prime C*-algebra and
$\mathcal{Z}\left(  \mathcal{B}\right)  \subseteq \mathcal{A}$ is a unital
C*-subalgebra that is isomorphic to a finite direct sum of tensor products of
algebras of the form $\mathcal{D}\otimes \mathcal{M}_{k}\left(  \mathbb{C}%
\right)  $, with $\mathcal{D}$ commutative, then $\mathcal{A}$ is
approximately normal in $\mathcal{B}$. Moreover, if $\mathcal{B}$ is a von
Neumann algebra, then $K_{an}\left(  \mathcal{A},\mathcal{B}\right)  \leq4.$
\end{theorem}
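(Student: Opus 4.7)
The plan is to reduce to the single-summand case $\mathcal{A}=\mathcal{D}\otimes\mathcal{M}_{k}(\mathbb{C})$ and then apply the two corollaries following Proposition~\ref{Macapp} with auxiliary algebra $\mathcal{W}=\mathcal{M}_{k}(\mathbb{C})$. Write $\mathcal{A}=\bigoplus_{j=1}^{m}\mathcal{A}_{j}$ with $\mathcal{A}_{j}\cong\mathcal{D}_{j}\otimes\mathcal{M}_{k_{j}}(\mathbb{C})$, let $P_{j}\in\mathcal{A}$ be the associated minimal central projections, and set $\mathcal{B}_{j}:=P_{j}\mathcal{B}P_{j}$. By Proposition~\ref{inj}(5) it suffices to show each $\mathcal{A}_{j}$ is approximately normal (resp., metric approximately normal with $K_{an}\leq 4$) in $\mathcal{B}_{j}$; the metric case uses additionally that $\|\sum_{j}X_{j}\|=\max_{j}\|X_{j}\|$ for $X_{j}\in P_{j}\mathcal{B}P_{j}$, so the constant is preserved under direct sum. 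One checks directly that $\mathcal{B}_{j}$ inherits central primality from $\mathcal{B}$ (and is a von Neumann algebra when $\mathcal{B}$ is), and when $\mathcal{B}$ is a von Neumann algebra one has $\mathcal{Z}(\mathcal{B}_{j})=P_{j}\mathcal{Z}(\mathcal{B})\subseteq P_{j}\mathcal{A}=\mathcal{A}_{j}$.

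Now assume $\mathcal{A}=\mathcal{D}\otimes\mathcal{M}_{k}(\mathbb{C})$ is a single summand, with matrix units $\{e_{pq}\}\subseteq\mathcal{A}\subseteq\mathcal{B}$, and put $\widetilde{\mathcal{B}}:=e_{11}\mathcal{B}e_{11}$. The standard matrix-unit identification $b\mapsto[e_{1p}be_{q1}]_{p,q}$ yields $\mathcal{B}\cong\mathcal{M}_{k}(\widetilde{\mathcal{B}})=\widetilde{\mathcal{B}}\otimes\mathcal{M}_{k}(\mathbb{C})$, carrying $\mathcal{A}$ onto $\mathcal{M}_{k}(\mathcal{D})=\mathcal{D}\otimes\mathcal{M}_{k}(\mathbb{C})$. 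Under this identification $\mathcal{Z}(\mathcal{B})$ corresponds to $\mathcal{Z}(\widetilde{\mathcal{B}})\cdot I_{k}$, so $\mathcal{Z}(\mathcal{B})\subseteq\mathcal{A}$ translates to $\mathcal{Z}(\widetilde{\mathcal{B}})\subseteq\mathcal{D}$; and a brief computation (using that $\mathcal{Z}(\mathcal{B})$ commutes with $e_{11}$, so any central element of $\mathcal{B}$ separating $x,y\in\widetilde{\mathcal{B}}$ gives $e_{11}\cdot$ that element as a central separator in $\widetilde{\mathcal{B}}$) shows $\widetilde{\mathcal{B}}$ remains centrally prime. Applying the first corollary following Proposition~\ref{Macapp} to the commutative pair $\mathcal{D}\subseteq\widetilde{\mathcal{B}}$ with $\mathcal{W}=\mathcal{M}_{k}(\mathbb{C})$ yields that $\mathcal{D}\otimes\mathcal{M}_{k}(\mathbb{C})=\mathcal{A}$ is approximately normal in $\widetilde{\mathcal{B}}\otimes_{\text{min}}\mathcal{M}_{k}(\mathbb{C})=\mathcal{B}$; applying the subsequent metric corollary when $\mathcal{B}$ is a von Neumann algebra gives $K_{an}(\mathcal{A},\mathcal{B})\leq 4$.

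The main technical obstacle is the direct-sum reduction when $\mathcal{B}$ is merely centrally prime rather than a von Neumann algebra: in this generality $\mathcal{Z}(\mathcal{B}_{j})$ may strictly contain $P_{j}\mathcal{Z}(\mathcal{B})$, so the hypothesis $\mathcal{Z}(\mathcal{B}_{j})\subseteq\mathcal{A}_{j}$ needed to apply the single-summand argument to each corner is not automatic. The issue disappears in the von Neumann setting (which is all that is required for the $K_{an}\leq 4$ bound), while in the general centrally prime case one would need either to enlarge each $\mathcal{D}_{j}$ inside the corner (preserving commutativity, since $\mathcal{Z}(\mathcal{B}_{j})$ commutes with $\mathcal{D}_{j}$) and track how the approximate normality descends, or to perform the matrix-unit reduction in a single global step rather than summand-by-summand.
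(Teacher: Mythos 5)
Your outline follows the same skeleton as the paper's proof: cut $\mathcal{B}$ down by the central projections $P_{j}$ of $\mathcal{A}$, factor each corner as $P_{j}\mathcal{B}P_{j}\cong\widetilde{\mathcal{B}}_{j}\otimes\mathcal{M}_{k_{j}}\left(\mathbb{C}\right)$ via matrix units, and feed the resulting commutative--times--matrix situation into Proposition \ref{Macapp} and its corollaries. But as written there are two genuine gaps. The first you flag yourself and do not close: the centrally prime (non--von Neumann) assertion of the theorem needs, in each corner, a separating ``central'' element inside $\mathcal{A}_{j}$, and you only gesture at two possible repairs. The repair is short and should be carried out: if $h,s$ are positive contractions in the corner $Q\mathcal{B}Q$ (with $Q\leq P_{j}$ the $\left(1,1\right)$ matrix unit) and $h\left(Q\mathcal{B}Q\right)s=\left\{0\right\}$, then $h\mathcal{B}s=\left\{0\right\}$, so central primality of $\mathcal{B}$ itself gives $e\in\mathcal{Z}\left(\mathcal{B}\right)\subseteq\mathcal{A}$ with $h\leq eQ$ and $s\leq Q-eQ$; since $eP_{j}$ commutes with the matrix units it lies in $\mathcal{D}_{j}\otimes1$, so $eQ$ is a separator in $\mathcal{D}_{j}$. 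Inspecting the proof of Proposition \ref{Macapp}, its center hypothesis is used only to produce exactly such a separator inside $\mathcal{A}$, so the corner argument goes through with this weaker input; without some such observation your reduction is incomplete, because $\mathcal{Z}\left(P_{j}\mathcal{B}P_{j}\right)\subseteq\mathcal{A}_{j}$ is indeed not automatic (the paper simply asserts $\mathcal{Z}\left(\mathcal{B}_{k}\right)\subseteq\mathcal{D}_{k}$).

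The second gap concerns the constant $4$. Your claim that ``the constant is preserved under direct sum'' is not justified: $\mathcal{B}$ is not the direct sum of the corners ($P_{j}$ is central in $\mathcal{A}$, not in $\mathcal{B}$), the identity $\left\Vert\sum_{j}X_{j}\right\Vert=\max_{j}\left\Vert X_{j}\right\Vert$ applies only to block-diagonal elements, and Proposition \ref{inj}(5) covers the qualitative notions (strong injectivity, normality, approximate normality) but says nothing about $K_{an}$. For a general $T\in\mathcal{B}$ the off-diagonal corners $P_{i}TP_{j}$ must be paid for: averaging over unitaries $\sum_{j}\lambda_{j}P_{j}\in\left(\mathcal{A},\mathcal{B}\right)^{\prime}$ gives $\left\Vert T-\sum_{j}P_{j}TP_{j}\right\Vert\leq d_{n}\left(T,\mathcal{A},\mathcal{B}\right)$, and one checks $d_{an}\left(P_{j}TP_{j},\mathcal{A}_{j},P_{j}\mathcal{B}P_{j}\right)\leq d_{an}\left(T,\mathcal{A},\mathcal{B}\right)$, but then your corner-by-corner use of the $K_{an}\leq4$ corollary yields only a bound of the form $1+4=5$, not $4$. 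The paper's bookkeeping is different and is what delivers $4$: choose masas $\mathcal{E}_{k}\supseteq\mathcal{D}_{k}$ in $\mathcal{B}_{k}$, use weak* injectivity of the single block-diagonal algebra $\mathcal{E}=\sum_{k}^{\oplus}\mathcal{E}_{k}\otimes\mathcal{M}_{s_{k}}\left(\mathbb{C}\right)$ in $\mathcal{B}$ to find $S\in\mathcal{E}$ with $\left\Vert T-S\right\Vert\leq d_{an}\left(T,\mathcal{A},\mathcal{B}\right)$ in one step, and then apply Theorem \ref{Macapp} (constant $1$ for elements of $\mathcal{E}$) to get $dist\left(S,\mathcal{A}\right)\leq d_{an}\left(S,\mathcal{A},\mathcal{B}\right)\leq2\left\Vert S-T\right\Vert+d_{an}\left(T,\mathcal{A},\mathcal{B}\right)\leq3d_{an}\left(T,\mathcal{A},\mathcal{B}\right)$, for a total of $4$. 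To reach the stated constant you would need to reorganize your metric argument along these lines (performing the expectation onto the masa level globally) rather than assembling per-corner constants.
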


\begin{proof}
Write $\mathcal{A}=\mathcal{A}_{1}\oplus \cdots \oplus \mathcal{A}_{n}$ where
each $\mathcal{A}_{k}$ is isomorphic to $\mathcal{D}_{k}\otimes \mathcal{M}%
_{s_{k}}\left(  \mathbb{C}\right)  $ for some $s_{k}$ in $\mathbb{N}$, and let
$P_{1}=1\oplus0\oplus \cdots \oplus0,P_{2}=0\oplus1\oplus \cdots \oplus
0,\ldots,P_{n}=0\oplus \cdots \oplus0\oplus1$. It follows from Proposition
\ref{inj} that $\sum_{j=1}^{n}P_{j}\mathcal{B}P_{j}$ is strongly injective in
$\mathcal{B}$. Since $\mathcal{D}_{k}\otimes \mathcal{M}_{s_{k}}\left(
\mathbb{C}\right)  \subseteq P_{k}\mathcal{B}P_{k}$ we can write
\[
P_{k}\mathcal{B}P_{k}=\mathcal{B}_{k}\otimes \mathcal{M}_{s_{k}}\left(
\mathbb{C}\right)
\]
with $\mathcal{D}_{k}\subseteq \mathcal{B}_{k}$. Since $\mathcal{B}$ is
centrally prime, so is each $P_{k}\mathcal{B}P_{k}$, and thus so does each
$\mathcal{B}_{k}$. Since $\mathcal{Z}\left(  \mathcal{B}\right)
\subseteq \mathcal{D}$, we know that
\[
\mathcal{Z}\left(  \mathcal{B}_{k}\otimes \mathcal{M}_{s_{k}}\left(
\mathbb{C}\right)  \right)  =\mathcal{Z}\left(  \mathcal{B}_{k}\right)
\otimes1\subseteq \mathcal{D}_{k}\otimes \mathcal{M}_{s_{k}}\left(
\mathbb{C}\right)  ,
\]
which implies $\mathcal{Z}\left(  \mathcal{B}_{k}\right)  \subseteq
\mathcal{D}_{k}$ for $1\leq k\leq n$. Since, by \cite{H4}, $\mathcal{D}_{k}$
is normal in $\mathcal{B}_{k}$, we know that $P_{k}\mathcal{A}P_{k}%
=\mathcal{D}_{k}\otimes \mathcal{M}_{s_{k}}\left(  \mathbb{C}\right)  $ is
normal in $\mathcal{B}_{k}\otimes \mathcal{M}_{s_{k}}\left(  \mathbb{C}\right)
=P_{k}\mathcal{B}P_{k}$ for $1\leq k\leq n$. Hence by Proposition \ref{inj},
$\mathcal{A}$ is normal in $\mathcal{B}$. If $\mathcal{B}$ is a von Neumann
algebra and if, for each $k$, $\mathcal{E}_{k}$ is a masa in $\mathcal{B}_{k}$
containing $\mathcal{D}_{k}$ for $1\leq k\leq n$, then%
\[
\sum_{1\leq k\leq n}^{\oplus}\mathcal{E}_{k}\otimes \mathcal{M}_{s_{k}}\left(
\mathbb{C}\right)
\]
is weak* injective in $\mathcal{B}$. It follows from Theorem \ref{Macapp}
that, for every $S=S_{1}\oplus \cdots \oplus S_{n}\in \sum_{1\leq k\leq
n}^{\oplus}\mathcal{E}_{k}\otimes \mathcal{M}_{s_{k}}\left(  \mathbb{C}\right)
$%
\[
dist\left(  S,\mathcal{A}\right)  \leq \max_{1\leq k\leq n}dist\left(
S_{k},\mathcal{A}_{k}\otimes \mathcal{M}_{s_{k}}\left(  \mathbb{C}\right)
\right)  \leq
\]%
\[
\max_{1\leq k\leq n}d_{an}\left(  S_{k},\mathcal{A}_{k}\mathcal{\otimes
M}_{s_{k}}\left(  \mathbb{C}\right)  ,\mathcal{B}_{k}\otimes \mathcal{M}%
_{s_{k}}\left(  \mathbb{C}\right)  \right)  \leq
\]%
\[
d_{an}\left(  S,\mathcal{A},\mathcal{B}\right)  .
\]
If $T\in \mathcal{B}$, it follows that there is an $S\in \sum_{1\leq k\leq
n}^{\oplus}\mathcal{E}_{k}\otimes \mathcal{M}_{s_{k}}\left(  \mathbb{C}\right)
$ such that%
\[
\left \Vert T-S\right \Vert \leq d_{an}\left(  T,\mathcal{A},\mathcal{B}\right)
.
\]
It follows that
\[
dist\left(  T,\mathcal{A}\right)  \leq dist\left(  S,\mathcal{A}\right)
+\left \Vert S-T\right \Vert \leq
\]%
\[
d_{an}\left(  S-T,\mathcal{A},\mathcal{B}\right)  +2d_{an}\left(
T,\mathcal{A},\mathcal{B}\right)  \leq
\]%
\[
2\left \Vert S-T\right \Vert +2d_{an}\left(  T,\mathcal{A},\mathcal{B}\right)
\leq4d_{an}\left(  T,\mathcal{A},\mathcal{B}\right)  .
\]

\end{proof}

\bigskip

\begin{theorem}
\label{AH}If $\mathcal{A}$ is a unital AH C*-subalgebra of a von Neumann
algebra $\mathcal{B}$, then%
\[
K_{an}\left(  \mathcal{A},\mathcal{B}\right)  \leq4.
\]

\end{theorem}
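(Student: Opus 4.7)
The plan is to write $\mathcal{A}$ (replacing it by $C^{\ast}(\mathcal{A} \cup \mathcal{Z}(\mathcal{B}))$ if necessary, as in the version stated in the introduction) as a norm limit of finite direct sums of algebras of the form $\mathcal{D} \otimes \mathcal{M}_{k}(\mathbb{C})$ with $\mathcal{D}$ commutative, apply Theorem~\ref{smallAH} at each finite stage, and take limits via Lemma~\ref{andl}.

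First I would verify that every von Neumann algebra $\mathcal{B}$ is centrally prime: if $0 \le x, y \le 1$ and $x\mathcal{B}y = \{0\}$, then the central support $e$ of the range projection of $x$ satisfies $x \le e \le 1$ and $y \le 1 - e$. This is what is needed to invoke Theorem~\ref{smallAH} inside a von Neumann algebra. Next, by the AH hypothesis, choose an increasingly directed family $\{\mathcal{A}_{i}\}_{i \in I}$ of unital C*-subalgebras of $\mathcal{A}$ with dense union, each isomorphic to a finite direct sum $\bigoplus_{k} \mathcal{D}_{i,k} \otimes \mathcal{M}_{s_{i,k}}(\mathbb{C})$ with $\mathcal{D}_{i,k}$ commutative (if the version of AH in use permits corners $p(\mathcal{D} \otimes \mathcal{M}_{k})p$, a short reduction brings us to this form). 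Replace each $\mathcal{A}_{i}$ by $\widetilde{\mathcal{A}}_{i} = C^{\ast}(\mathcal{A}_{i} \cup \mathcal{Z}(\mathcal{B}))$; since $\mathcal{Z}(\mathcal{B})$ commutes with every central projection of $\mathcal{A}_{i}$, the direct-sum decomposition persists and only the commutative tensor factor $\mathcal{D}_{i,k}$ of each summand is enlarged. Theorem~\ref{smallAH} then yields $K_{an}(\widetilde{\mathcal{A}}_{i}, \mathcal{B}) \le 4$ for every $i$.

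Finally, applying Lemma~\ref{andl} to the directed family $\{\widetilde{\mathcal{A}}_{i}\}$, whose norm closure is $C^{\ast}(\mathcal{A} \cup \mathcal{Z}(\mathcal{B}))$, gives
\[
K_{an}\bigl(C^{\ast}(\mathcal{A} \cup \mathcal{Z}(\mathcal{B})), \mathcal{B}\bigr) \le \liminf_{i} K_{an}(\widetilde{\mathcal{A}}_{i}, \mathcal{B}) \le 4,
\]
which, under the implicit hypothesis $\mathcal{Z}(\mathcal{B}) \subseteq \mathcal{A}$, is the desired bound $K_{an}(\mathcal{A}, \mathcal{B}) \le 4$.

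The main obstacle is the bookkeeping required to ensure that absorbing $\mathcal{Z}(\mathcal{B})$ into each finite-dimensional AH building block keeps it inside the structural class permitted by Theorem~\ref{smallAH}; this ultimately reduces to the essentially automatic observation that $\mathcal{Z}(\mathcal{B})$ commutes with the central projections of $\mathcal{A}_{i}$ and therefore decomposes compatibly across the summands of the building block, absorbing only into the commutative tensor factors.
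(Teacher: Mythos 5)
Your argument is essentially the paper's own proof: the paper likewise deduces Theorem \ref{AH} by applying Theorem \ref{smallAH} to the finite stages of the AH algebra and then passing to the limit with Lemma \ref{andl}. Your additional step of absorbing $\mathcal{Z}\left(\mathcal{B}\right)$ into each building block (so that the hypothesis $\mathcal{Z}\left(\mathcal{B}\right)\subseteq\mathcal{A}_{i}$ of Theorem \ref{smallAH} is met) is left implicit in the paper and correctly reflects the formulation with $C^{\ast}\left(\mathcal{A}\cup\mathcal{Z}\left(\mathcal{B}\right)\right)$ stated in the introduction.
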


\begin{proof}
This follows from Theorem \ref{smallAH} and Lemma \ref{andl}.
\end{proof}

\bigskip

\bigskip

\begin{theorem}
\label{AF}If $\mathcal{B}$ is a primitive unital C*-algebra and $\mathcal{A}$
is a unital AF C*-subalgebra of $\mathcal{B}$, then%
\[
K_{an}\left(  \mathcal{A},\mathcal{B}\right)  \leq1.
\]
\bigskip
\end{theorem}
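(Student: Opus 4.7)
The strategy is to reduce the AF case to the finite-dimensional case via Lemma \ref{andl}, and then to show that every unital finite-dimensional C*-subalgebra of a primitive C*-algebra is strongly injective, so that Proposition \ref{dl} yields $K_n \leq 1$ (and hence $K_{an} \leq 1$). I would first write $\mathcal{A}$ as the norm closure of an increasing union $\bigcup_i \mathcal{F}_i$ of unital finite-dimensional C*-subalgebras. Lemma \ref{andl} then gives $K_{an}(\mathcal{A},\mathcal{B}) \leq \liminf_i K_{an}(\mathcal{F}_i,\mathcal{B})$, and since $K_{an} \leq K_n$, it suffices to prove $K_n(\mathcal{F},\mathcal{B}) \leq 1$ for every unital finite-dimensional C*-subalgebra $\mathcal{F} \subseteq \mathcal{B}$. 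Applying Proposition \ref{dl} to the two-term chain $\mathcal{F} \subseteq \mathcal{B}$, this reduces to showing that $\mathcal{F}$ is strongly injective in $\mathcal{B}$.

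To establish strong injectivity, I would decompose $\mathcal{F} = \bigoplus_{k=1}^n M_{s_k}(\mathbb{C})$ with minimal central projections $P_1,\ldots,P_n \in \mathcal{F}$. Each corner $P_k\mathcal{B}P_k$ is a hereditary C*-subalgebra of the primitive algebra $\mathcal{B}$, and is therefore itself primitive. The natural identification $P_k\mathcal{B}P_k \cong M_{s_k} \otimes \mathcal{C}_k$, with $\mathcal{C}_k = (M_{s_k})' \cap P_k\mathcal{B}P_k$, shows that $\mathcal{C}_k \cong e(P_k\mathcal{B}P_k)e$ for any rank-one projection $e \in M_{s_k}$, so $\mathcal{C}_k$ is also primitive. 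Proposition \ref{inj}(4) then gives that $\mathbb{C} = \mathcal{Z}(\mathcal{C}_k)$ is strongly injective in $\mathcal{C}_k$; Proposition \ref{inj}(8) lifts this to strong injectivity of $M_{s_k}$ in $P_k\mathcal{B}P_k$; and Proposition \ref{inj}(5), applied to the decomposition $\mathcal{F} = \bigoplus_k M_{s_k}$ with central projections $P_k$, finally yields strong injectivity of $\mathcal{F}$ in $\mathcal{B}$, as desired.

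The only mild subtlety is verifying primitivity of the corners $P_k\mathcal{B}P_k$ and of the cut-down $\mathcal{C}_k$, which follows from the standard fact that a hereditary subalgebra of a primitive C*-algebra remains primitive (using a faithful irreducible representation $\pi$ of $\mathcal{B}$ and its restriction to the invariant subspace $\pi(P_k)H$). In contrast with Theorem \ref{smallAH}, where one must detour through a masa and absorb a factor of $4$ via triangle inequalities, the primitive hypothesis makes strong injectivity of $\mathcal{F}$ in $\mathcal{B}$ completely direct, giving the sharper constant $1$ from Proposition \ref{dl}.
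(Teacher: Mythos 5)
Your proposal is correct and follows essentially the same route as the paper: reduce to a unital finite-dimensional subalgebra via Lemma \ref{andl}, obtain strong injectivity from Proposition \ref{inj} (primitivity of the corners $P_k\mathcal{B}P_k$ and of the relative commutants, then parts (4), (8), (5)), and conclude with Proposition \ref{dl}. The only cosmetic difference is that you invoke Proposition \ref{inj}(5) with ambient algebra $\mathcal{B}$ and a two-term chain in Proposition \ref{dl}, whereas the paper passes through the intermediate algebra $\sum_k P_k\mathcal{B}P_k$; you also spell out the primitivity of the corners and cut-downs, which the paper merely asserts.
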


\begin{proof}
Suppose $\mathcal{A=M}_{s_{1}}\left(  \mathbb{C}\right)  \oplus \cdots
\oplus \mathcal{M}_{s_{k}}\left(  \mathbb{C}\right)  $ and let $P_{1}%
=1\oplus0\oplus \cdots \oplus0$, $P_{2}=0\oplus1\oplus \cdots \oplus0\,$,$\ldots
$,$P_{k}=0\oplus \cdots \oplus0\oplus1$. Then $\sum_{1\leq i\leq k}%
P_{i}\mathcal{B}P_{i}$ is strongly injective in $\mathcal{B}$ and we can write
$P_{i}\mathcal{B}P_{i}=\mathcal{M}_{s_{i}}\left(  \mathcal{B}_{i}\right)  $
for $1\leq i\leq k.$ Since $\mathcal{B}$ is primitive, it follows that each
$\mathcal{B}_{i}$ is primitive, and thus $\mathbb{C}$ is strongly injective in
$\mathcal{B}_{i}$ for $1\leq i\leq k$. Hence by Proposition \ref{inj},
$\mathcal{M}_{s_{i}}\left(  \mathbb{C}\right)  $ is strongly injective in
$\mathcal{M}_{s_{i}}\left(  \mathcal{B}_{i}\right)  $ for $1\leq i\leq k$.
Whence, by Proposition \ref{inj}, $\mathcal{A}$ is strongly injective in
$\sum_{1\leq i\leq k}P_{i}\mathcal{B}P_{i}$. Hence, by Proposition \ref{dl},
$K_{n}\left(  \mathcal{A},\mathcal{B}\right)  \leq1.$ The general case easily
follows from Lemma \ref{andl}.\bigskip
\end{proof}

The reason we can get better metric results (AH instead of AF) for von Neumann
algebras than primitive C*-algebras is that we know that every masa is
strongly injective, or that $K_{an}\left(  \mathcal{A},\mathcal{B}\right)
<\infty$ when $\mathcal{A}$ is a masa in a von Neumann algebra $\mathcal{B}$.

Suppose $I$ is an infinite set and $\left \{  \mathcal{B}_{i}:i\in I\right \}  $
is a family of unital C*-algebras and, for each $i\in I$, $\mathcal{A}_{i}$ is
a unital C*-subalgebra of $\mathcal{B}_{i}$. Suppose $\alpha$ is a nontrivial
ultrafilter on $I$ and $\pi:%
{\displaystyle \prod_{i\in I}}
\mathcal{B}_{i}\rightarrow%
{\displaystyle \prod_{i\in I}}
\mathcal{B}_{i}/\sum_{i\in I}^{\oplus}\mathcal{B}_{i}$ and $\rho:%
{\displaystyle \prod_{i\in I}}
\mathcal{B}_{i}\rightarrow \overset{\alpha}{%
{\displaystyle \prod}
}\mathcal{B}_{i}$ are the quotient maps, where $\overset{\alpha}{%
{\displaystyle \prod}
}\mathcal{B}_{i}$ is the C*-ultraproduct of the $\mathcal{B}_{i}$'s with
respect to the ultrafilter $\alpha$. Let $\mathcal{A}=%
{\displaystyle \prod_{i\in I}}
\mathcal{A}_{i}$.

\begin{proposition}
\label{ultra}The following are true.

\begin{enumerate}
\item $K_{an}\left(  \pi \left(  \mathcal{A}\right)  ,%
{\displaystyle \prod_{i\in I}}
\mathcal{B}_{i}/\sum_{i\in I}\mathcal{B}_{i}\right)  \leq \sup_{i\in I}%
K_{an}\left(  \mathcal{A}_{i},\mathcal{B}_{i}\right)  $

\item $K_{n}\left(  \pi \left(  \mathcal{A}\right)  ,%
{\displaystyle \prod_{i\in I}}
\mathcal{B}_{i}/\sum_{i\in I}\mathcal{B}_{i}\right)  \leq \sup_{i\in I}%
K_{n}\left(  \mathcal{A}_{i},\mathcal{B}_{i}\right)  $.

\item $K_{an}\left(  \rho \left(  \mathcal{A}\right)  ,\overset{\alpha}{%
{\displaystyle \prod}
}\mathcal{B}_{i}\right)  \leq \lim_{i\rightarrow \alpha}K_{an}\left(
\mathcal{A}_{i},\mathcal{B}_{i}\right)  $

\item $K_{n}\left(  \rho \left(  \mathcal{A}\right)  ,\overset{\alpha}{%
{\displaystyle \prod}
}\mathcal{B}_{i}\right)  \leq \lim_{i\rightarrow \alpha}K_{n}\left(
\mathcal{A}_{i},\mathcal{B}_{i}\right)  $

\item If each $\mathcal{A}_{i}$ is a masa in a von Neumann algebra
$\mathcal{B}_{i}$, then
\[
K_{n}\left(  \pi \left(  \mathcal{A}\right)  ,%
{\displaystyle \prod_{i\in I}}
\mathcal{B}_{i}/\sum_{i\in I}\mathcal{B}_{i}\right)  \leq1\text{ and }%
K_{n}\left(  \rho \left(  \mathcal{A}\right)  ,\overset{\alpha}{%
{\displaystyle \prod}
}\mathcal{B}_{i}\right)  \leq1.
\]

\item If each $\mathcal{B}_{i}$ is primitive or a von Neumann algebra, then%
\[
\mathcal{Z}\left(
{\displaystyle \prod^{\alpha}}
\mathcal{B}_{i}\right)  =%
{\displaystyle \prod^{\alpha}}
\mathcal{Z}\left(  \mathcal{B}_{i}\right)  \text{ and }%
\]%
\[
\mathcal{Z}\left(
{\displaystyle \prod_{i\in I}}
\mathcal{B}_{i}/\sum_{i\in I}\mathcal{B}_{i}\right)  =\pi \left(
{\displaystyle \prod_{i\in I}}
\mathcal{Z}\left(  \mathcal{B}_{i}\right)  \right)
\]

\item If, for each $i\in I$, $\mathcal{B}_{i}=B\left(  H_{i}\right)  $ for
some Hilbert space $H_{i}$, then%
\[
K_{an}\left(  \pi \left(  \mathcal{A}\right)  ,%
{\displaystyle \prod_{i\in I}}
\mathcal{B}_{i}/\sum_{i\in I}\mathcal{B}_{i}\right)  \leq29\text{ and }%
K_{an}\left(  \rho \left(  \mathcal{A}\right)  ,\overset{\alpha}{%
{\displaystyle \prod}
}\mathcal{B}_{i}\right)  \leq29.
\]

\item If each $\mathcal{B}_{i}$ is a von Neumann algebra and $\mathcal{D}$ is
a unital commutative C*-subalgebra of $%
{\displaystyle \prod_{i\in I}}
B_{i}$, then%
\[
K_{an}\left(  C^{\ast}\left(  \pi \left(  \mathcal{D}\right)  \cup
\mathcal{Z}\left(
{\displaystyle \prod_{i\in I}}
\mathcal{B}_{i}/\sum_{i\in I}\mathcal{B}_{i}\right)  \right)  ,%
{\displaystyle \prod_{i\in I}}
\mathcal{B}_{i}/\sum_{i\in I}\mathcal{B}_{i}\right)  \leq4\text{ and }%
\]%
\[
K_{an}\left(  C^{\ast}\left(  \rho \left(  \mathcal{D}\right)  \cup
\mathcal{Z}\left(
{\displaystyle \prod^{\alpha}}
\mathcal{B}_{i}\right)  \right)  ,%
{\displaystyle \prod^{\alpha}}
\mathcal{B}_{i}\right)  \leq4,
\]

\item If $\mathcal{D}\subseteq%
{\displaystyle \prod_{i\in I}}
\mathcal{B}_{i}$ is norm separable and $I=\mathbb{N}$, then
\[
K_{n}\left(  C^{\ast}\left(  \pi \left(  \mathcal{D}\right)  \cup
\mathcal{Z}\left(
{\displaystyle \prod_{i\in I}}
\mathcal{B}_{i}/\sum_{i\in I}\mathcal{B}_{i}\right)  \right)  ,%
{\displaystyle \prod_{i\in I}}
\mathcal{B}_{i}/\sum_{i\in I}\mathcal{B}_{i}\right)  =\text{ }%
\]%
\[
K_{an}\left(  C^{\ast}\left(  \pi \left(  \mathcal{D}\right)  \cup
\mathcal{Z}\left(
{\displaystyle \prod_{i\in I}}
\mathcal{B}_{i}/\sum_{i\in I}\mathcal{B}_{i}\right)  \right)  ,%
{\displaystyle \prod_{i\in I}}
\mathcal{B}_{i}/\sum_{i\in I}\mathcal{B}_{i}\right)  \text{ and}%
\]%
\[
K_{n}\left(  C^{\ast}\left(  \rho \left(  \mathcal{D}\right)  \cup
\mathcal{Z}\left(
{\displaystyle \prod^{\alpha}}
\mathcal{B}_{i}\right)  \right)  ,%
{\displaystyle \prod^{\alpha}}
\mathcal{B}_{i}\right)  =
\]%
\[
K_{an}\left(  C^{\ast}\left(  \rho \left(  \mathcal{D}\right)  \cup
\mathcal{Z}\left(
{\displaystyle \prod^{\alpha}}
\mathcal{B}_{i}\right)  \right)  ,%
{\displaystyle \prod^{\alpha}}
\mathcal{B}_{i}\right)  .
\]

\end{enumerate}
\end{proposition}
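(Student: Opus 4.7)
The plan is to prove all nine parts via a unified lift-and-push strategy, exploiting that both $\prod_{i\in I}\mathcal{B}_i/\sum^\oplus\mathcal{B}_i$ and $\prod^\alpha\mathcal{B}_i$ are quotients of $\prod_{i\in I}\mathcal{B}_i$ whose norm is a ``limsup-type'' functional on coordinates (the cofinite-filter limsup $\widetilde{\limsup}$ for $\pi$, the $\alpha$-limit for $\rho$), and that contractions in these quotients admit contractive lifts to $\prod_{i\in I}\mathcal{B}_i$. The engine of parts (1)--(4) is the coordinate-wise comparison
\[
\widetilde{\limsup}_i\,d_n(T_i,\mathcal{A}_i,\mathcal{B}_i)\le d_n\bigl(\pi(T),\pi(\mathcal{A}),\textstyle\prod/\sum\bigr),
\]
and its three analogues (with $\lim_\alpha$ in place of $\widetilde{\limsup}$, and with $d_{an}$ in place of $d_n$). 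For $d_n$ this is immediate: if the left side exceeds $c$, pick for each $i$ in the appropriate infinite or $\alpha$-large set a contraction $W_i\in(\mathcal{A}_i,\mathcal{B}_i)^\prime$ with $\|W_iT_i-T_iW_i\|>c$, set $W_i=0$ elsewhere, and observe that $\pi(W)$ (resp.\ $\rho(W)$) is a contraction in $(\pi(\mathcal{A}),\prod/\sum)^\prime$ (resp.\ $(\rho(\mathcal{A}),\prod^\alpha)^\prime$) with commutator norm exceeding $c$. For $d_{an}$ I would build a lifted net indexed by pairs $(F,\delta)$ with $F\subseteq\mathcal{A}$ finite and $\delta>0$, assembling each coordinate of $W_{(F,\delta)}$ from a contraction in $\mathcal{B}_i$ that almost realizes $d_{an}(T_i,\mathcal{A}_i,\mathcal{B}_i)$ while approximately commuting with $\{A_i:A\in F\}$ to within $\delta$. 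Combining the inequality with the coordinate-wise bound $\|T_i-A_i\|\le K_n(\mathcal{A}_i,\mathcal{B}_i)\,d_n(T_i,\mathcal{A}_i,\mathcal{B}_i)+\varepsilon$ (and the $K_{an}$ analogue), and setting $A=(A_i)$, yields each of (1)--(4).

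Parts (5) and (7) are immediate corollaries of (1)--(4): for (5) the corollary to Proposition~\ref{dl} gives $K_n(\mathcal{A}_i,\mathcal{B}_i)\le 1$ when $\mathcal{A}_i$ is a masa in a von Neumann algebra $\mathcal{B}_i$; for (7) the metric asymptotic double commutant theorem for $B(H)$ from \cite{H2} supplies the universal bound $K_{an}(\mathcal{A}_i,B(H_i))\le 29$. Part (6) requires an independent argument: $\supseteq$ is clear, and for $\subseteq$ I would use for each von Neumann $\mathcal{B}_i$ the canonical center-valued expectation $E_i$ supplied by Proposition~\ref{inj}(2), and for primitive $\mathcal{B}_i$ the identification $\mathcal{Z}(\mathcal{B}_i)=\mathbb{C}1$ combined with Proposition~\ref{inj}(4); then show that a central element $\rho(T)$ (resp.\ $\pi(T)$) agrees modulo the ultrafilter (resp.\ the cofinite filter) with $\rho((E_i(T_i)))$ by exploiting the weak-operator-convergence of the $\mathcal{F}(\mathcal{Z}(\mathcal{B}_i),\mathcal{B}_i)$-convex combinations that realize $E_i$. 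Part (8) then combines (6) with Theorem~\ref{smallAH} applied coordinate-wise: each von Neumann $\mathcal{B}_i$ is centrally prime, and $C^\ast(\pi(\mathcal{D})\cup\mathcal{Z}(\prod/\sum))$ identifies with $\pi$ applied to a commutative C*-subalgebra of $\prod\mathcal{B}_i$ containing the coordinate-wise centers, giving coordinate bound $K_{an}\le 4$ which pushes through (1) or (3). For (9), when $I=\mathbb{N}$ and $\mathcal{D}$ is norm separable, the countable saturation of $\prod^\alpha\mathcal{B}_i$ (and of the sequence quotient) allows a diagonal argument that, from any bounded net asymptotically commuting with $C^\ast(\mathcal{D}\cup\mathcal{Z})$, produces a single contraction in the exact relative commutant realizing the same commutator norm with $T$; this forces $d_{an}=d_n$ on this subalgebra, so $K_n=K_{an}$.

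The main obstacle will be part (6): proving the $\subseteq$ direction of the center equality requires carefully threading the convex-combination approximations from Proposition~\ref{inj} through the $\alpha$-limit (respectively cofinite-filter limsup) structure, and the primitive-algebra case needs a separate argument since one lacks a canonical conditional expectation. A secondary difficulty is the diagonal construction of the lifted net in parts (1) and (3): asymptotic commutation in the quotient must be arranged simultaneously for all elements of $\prod\mathcal{A}_i$, which can be an uncountable family, so the net is indexed by finite subsets of $\prod\mathcal{A}_i$ and care is needed to confirm that the resulting quotient commutators actually attain the full $d_{an}$.
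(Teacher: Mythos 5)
Your treatment of parts (1)--(4) (the coordinate-wise comparison of $d_{n}$ and $d_{an}$ obtained by lifting contractions, with the approximately commuting net indexed by pairs $(F,\delta)$, $F$ a finite subset of $\prod_{i}\mathcal{A}_{i}$) is essentially the paper's argument, and your deductions of (5), (6) and (7) from these together with injectivity of masas and centers and the bound $29$ from \cite{H2} match the paper as well (the paper gets (6) even more directly from (2) and (4) applied with $\mathcal{A}_{i}=\mathcal{Z}\left(  \mathcal{B}_{i}\right)  $, since a central element $T$ of the quotient has $d_{n}\left(  T,\pi \left(  \prod_{i}\mathcal{Z}\left(  \mathcal{B}_{i}\right)  \right)  ,\cdot \right)  =0$; your expectation-based route is workable but unnecessary). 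Your sketch of (9) is consistent in spirit with the diagonalization the paper cites from \cite{H4}.

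The genuine gap is part (8). Parts (1) and (3) apply only to subalgebras of the form $\mathcal{A}=\prod_{i\in I}\mathcal{A}_{i}$: the coordinate-wise argument produces approximants $C_{i}\in C^{\ast}\left(  \left \{  D_{i}:D\in \mathcal{D}\right \}  \cup \mathcal{Z}\left(  \mathcal{B}_{i}\right)  \right)  $, and the string $\left(  C_{i}\right)  _{i}$ lies in the \emph{full product} of these coordinate algebras, not in $\mathcal{C}=C^{\ast}\left(  \mathcal{D}\cup \prod_{i}\mathcal{Z}\left(  \mathcal{B}_{i}\right)  \right)  $, which is a proper (generally much smaller) subalgebra of that product. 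So "pushing the coordinate bound $K_{an}\leq4$ through (1) or (3)" only bounds $dist\left(  T,\pi \left(  \prod_{i}\mathcal{C}_{i}\right)  \right)  $, i.e., the distance to the wrong, larger algebra; it gives no control of $dist\left(  T,C^{\ast}\left(  \pi \left(  \mathcal{D}\right)  \cup \mathcal{Z}\right)  \right)  $, which is exactly what (8) asserts. Closing this gap is the real content of the paper's proof of (8): one takes masas $\mathcal{A}_{i}\supseteq C^{\ast}\left(  \left \{  D_{i}\right \}  \right)  $ so that (5) gives $K_{n}\left(  \pi \left(  \prod_{i}\mathcal{A}_{i}\right)  \right)  \leq1$; for $T=\pi \left(  A\right)  $ with $A\in \prod_{i}\mathcal{A}_{i}$ one identifies $dist\left(  T,C^{\ast}\left(  \pi \left(  \mathcal{D}\right)  \cup \mathcal{Z}\right)  \right)  $ with $dist\left(  A,\mathcal{E}\right)  $, where $\mathcal{E}=C^{\ast}\left(  \mathcal{D}\cup \prod_{i}\mathcal{Z}\left(  \mathcal{B}_{i}\right)  \cup \sum_{i}^{\oplus}\mathcal{A}_{i}\right)  $, and applies Proposition \ref{Macapp} upstairs to produce an asymptotically commuting net realizing this distance; the inclusion of $\sum_{i}^{\oplus}\mathcal{A}_{i}$ in $\mathcal{E}$ is what allows the truncations $P_{J}A$ to be absorbed, guaranteeing that the commutator norm is not carried by finitely many coordinates (which vanish in the quotient), after which an infinite-index selection produces coordinatewise unitaries $W_{\left(  F,\varepsilon \right)  }$ whose images asymptotically commute with $\pi \left(  \mathcal{D}\right)  $; finally the case of general $T$ is handled by a Lemma \ref{rel}-style composition with $K_{n}\left(  \pi \left(  \prod_{i}\mathcal{A}_{i}\right)  \right)  \leq1$, yielding the constant $4$. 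None of this Bishop--Stone--Weierstrass input or the truncation/selection device appears in your outline, and without it the reduction you propose does not prove (8).
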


\begin{proof}
$\left(  1\right)  $ Let $\Delta=\sup_{i\in I}K_{an}\left(  \mathcal{A}%
_{i},\mathcal{B}_{i}\right)  $. If $\Delta=\infty$, there is nothing to prove,
so we can assume that $0<\Delta<\infty$. Suppose $T=\pi \left(  \left \{
T_{i}\right \}  \right)  \in%
{\displaystyle \prod_{i\in I}}
\mathcal{B}_{i}/\sum_{i\in I}\mathcal{B}_{i}$ and, for $1\leq k\leq m,$
$A_{k}=\left \{  A_{k,i}\right \}  \in \mathcal{A}$ and suppose $\varepsilon>0$.
Then, for each $i\in I,$
\[
dist\left(  T_{i},\mathcal{A}_{i}\right)  \leq K_{an}\left(  \mathcal{A}%
_{i},\mathcal{B}_{i}\right)  d_{an}\left(  T_{i},\mathcal{A}_{i}%
,\mathcal{B}_{i}\right)  \leq \Delta d_{an}\left(  T_{i},\mathcal{A}%
_{i},\mathcal{B}_{i}\right)  .
\]
Hence there is a unitary $U_{i}\in \mathcal{B}_{i}$ such that%
\[
\left \Vert U_{i}A_{k,i}-A_{k,i}U_{i}\right \Vert <\varepsilon
\]
for $1\leq k\leq m$ and such that%
\[
dist\left(  T_{i},\mathcal{A}_{i}\right)  <\Delta \left(  \left \Vert U_{i}%
T_{i}-T_{i}U_{i}\right \Vert +\varepsilon \right)  .
\]
Hence, for each $i\in I$ there is a $C_{i}\in \mathcal{A}_{i}$ such that%
\[
\left \Vert T_{i}-C_{i}\right \Vert <\Delta \left(  \left \Vert U_{i}T_{i}%
-T_{i}U_{i}\right \Vert +\varepsilon \right)
\]
Then $U=\left \{  U_{i}\right \}  \in%
{\displaystyle \prod_{i\in I}}
\mathcal{B}_{i}$ is a unitary and $C=\left \{  C_{i}\right \}  \in \mathcal{A}$.
Moreover%
\[
dist\left(  T,\pi \left(  \mathcal{A}\right)  \right)  \leq \left \Vert
T-\pi \left(  C\right)  \right \Vert =\limsup_{i\rightarrow \infty}\left \Vert
T_{i}-C_{i}\right \Vert \leq
\]%
\[
\limsup_{i\rightarrow \infty}\Delta \left(  \left \Vert U_{i}T_{i}-T_{i}%
U_{i}\right \Vert +\varepsilon \right)  =\Delta \left[  \left \Vert \pi \left(
U\right)  T-T\pi \left(  U\right)  \right \Vert +\varepsilon \right]  ,
\]
and, for $1\leq k\leq m,$%
\[
\left \Vert \pi \left(  U\right)  A_{k}-A_{k}\pi \left(  U\right)  \right \Vert
=\limsup_{i\rightarrow \infty}\left \Vert U_{i}A_{k,i}-A_{k,i}U_{i}\right \Vert
\leq \varepsilon.
\]
If we let $\Lambda$ be the set of all pairs $\lambda=\left(  \mathcal{F}%
,\varepsilon \right)  $ with $\varepsilon>0$ and $\mathcal{F}=\left \{
A_{1},\ldots,A_{m}\right \}  $ a finite subset of $\pi \left(  \mathcal{A}%
\right)  $ and we let $V_{\lambda}=\pi \left(  U\right)  $ constructed above,
then $\left \{  V_{\lambda}\right \}  $ is a net of unitary elements of $%
{\displaystyle \prod_{i\in I}}
\mathcal{B}_{i}/\sum_{i\in I}\mathcal{B}_{i}$ such that, for every $A\in
\pi \left(  \mathcal{A}\right)  $%
\[
\left \Vert V_{\lambda}A-AV_{\lambda}\right \Vert \rightarrow0,
\]
and such that%
\[
dist\left(  T,\pi \left(  \mathcal{A}\right)  \right)  \leq \Delta
\limsup \left \Vert TV_{\lambda}-V_{\lambda}T\right \Vert \leq \Delta
d_{an}\left(  T,\pi \left(  \mathcal{A}\right)  ,%
{\displaystyle \prod_{i\in I}}
\mathcal{B}_{i}/\sum_{i\in I}\mathcal{B}_{i}\right)  .
\]
\bigskip Hence $K_{an}\left(  \pi \left(  \mathcal{A}\right)  ,%
{\displaystyle \prod_{i\in I}}
\mathcal{B}_{i}/\sum_{i\in I}\mathcal{B}_{i}\right)  \leq \Delta$.

$\left(  2\right)  .$ Now we let $\Delta=\sup_{i\in I}K_{n}\left(
\mathcal{A}_{i},\mathcal{B}_{i}\right)  $. Suppose $T=\pi \left(  \left \{
T_{i}\right \}  \right)  \in%
{\displaystyle \prod_{i\in I}}
\mathcal{B}_{i}/\sum_{i\in I}\mathcal{B}_{i}$ and $\varepsilon>0.$ As in the
proof of $\left(  1\right)  $, for each $i\in I$, we can choose a unitary
$U_{i}\in \left(  \mathcal{A}_{i},\mathcal{B}_{i}\right)  ^{\prime}$ and a
$C_{i}\in \mathcal{A}_{i}$ such that%
\[
\left \Vert C_{i}-T_{i}\right \Vert \leq \Delta \left[  \left \Vert U_{i}%
T_{i}-T_{i}U_{i}\right \Vert +\varepsilon \right]  .
\]
Hence $U=\pi \left(  \left \{  U_{i}\right \}  \right)  $ is a unitary in
$\left(  \pi \left(  \mathcal{A}\right)  ,%
{\displaystyle \prod_{i\in I}}
\mathcal{B}_{i}/\sum_{i\in I}\mathcal{B}_{i}\right)  ^{\prime}$ and%
\[
dist\left(  T,\pi \left(  \mathcal{A}\right)  \right)  \leq \limsup
_{i\rightarrow \infty}\left \Vert T_{i}-C_{i}\right \Vert \leq \Delta \left \Vert
UT-TU\right \Vert \leq
\]%
\[
\Delta d_{n}\left(  T,\pi \left(  \mathcal{A}\right)  ,%
{\displaystyle \prod_{i\in I}}
\mathcal{B}_{i}/\sum_{i\in I}\mathcal{B}_{i}\right)  .
\]

$\left(  3\right)  $ and $\left(  4\right)  .$ The proofs are almost the same
as those of $\left(  1\right)  $ and $\left(  2\right)  $.

$\left(  5\right)  .$ This follows from $\left(  2\right)  $ and $\left(
4\right)  .$

$\left(  6\right)  .$ This follows from $\left(  2\right)  $ and $\left(
4\right)  $ and the fact that $\mathcal{Z}\left(  \mathcal{B}\right)  $ is
weakly injective when $\mathcal{B}$ is primitive or a von Neumann algebra,
which implies $K_{n}\left(  \mathcal{Z}\left(  \mathcal{B}\right)
,\mathcal{B}\right)  =1$.

$\left(  7\right)  .$ This follows from $\left(  1\right)  $ and $\left(
3\right)  $ and the fact from \cite{H2} that $K_{an}\left(  \mathcal{C}%
,B\left(  H\right)  \right)  \leq29$ for every Hilbert space $H$ and every
unital C*-subalgebra $\mathcal{C}\subseteq B\left(  H\right)  $.

$\left(  8\right)  $ We can find, for each $i\in I$, a masa $\mathcal{A}_{i}$
in $\mathcal{B}_{i}$ so that $\mathcal{A=}%
{\displaystyle \prod_{i\in I}}
\mathcal{A}_{i}$ contains $\mathcal{D}$. We know from $\left(  5\right)  $
that
\[
K_{n}\left(  \pi \left(  \mathcal{A}\right)  ,%
{\displaystyle \prod_{i\in I}}
\mathcal{B}_{i}/\sum_{i\in I}\mathcal{B}_{i}\right)  \leq1\text{ and }%
K_{n}\left(  \rho \left(  \mathcal{A}\right)  ,\overset{\alpha}{%
{\displaystyle \prod}
}\mathcal{B}_{i}\right)  \leq1.
\]
Suppose $T=\pi \left(  \left \{  A_{i}\right \}  \right)  \in \pi \left(
\mathcal{A}\right)  $ with $A=\left \{  A_{i}\right \}  \in \mathcal{A}$ and
suppose $\varepsilon>0$. We know from $\left(  5\right)  $ that $\mathcal{Z}%
\left(
{\displaystyle \prod_{i\in I}}
\mathcal{B}_{i}/\sum_{i\in I}\mathcal{B}_{i}\right)  =\pi \left(
{\displaystyle \prod_{i\in I}}
\mathcal{Z}\left(  \mathcal{B}_{i}\right)  \right)  $, so $\pi \left(  C^{\ast
}\left(  \mathcal{D}\cup%
{\displaystyle \prod_{i\in I}}
\mathcal{Z}\left(  \mathcal{B}_{i}\right)  \cup \sum^{\oplus}\mathcal{A}%
_{i}\right)  \right)  =C^{\ast}\left(  \pi \left(  \mathcal{D}\right)
\cup \mathcal{Z}\left(
{\displaystyle \prod_{i\in I}}
\mathcal{B}_{i}/\sum_{i\in I}\mathcal{B}_{i}\right)  \right)  $ and thus
$\mathcal{E}=_{\text{\textrm{def}}}C^{\ast}\left(  \mathcal{D}\cup%
{\displaystyle \prod_{i\in I}}
\mathcal{Z}\left(  \mathcal{B}_{i}\right)  \cup \sum^{\oplus}\mathcal{A}%
_{i}\right)  \subseteq \mathcal{A}$. It is clear that $dist\left(  T,C^{\ast
}\left(  \pi \left(  \mathcal{D}\right)  \cup \mathcal{Z}\left(
{\displaystyle \prod_{i\in I}}
\mathcal{B}_{i}/\sum_{i\in I}\mathcal{B}_{i}\right)  \right)  \right)  $ is
the same as $dist\left(  A,\mathcal{E}\right)  $. However, it follows from
Proposition \ref{Macapp} (with $\mathcal{W}=\mathbb{C}$) that there is a net
$\left \{  U_{\lambda}\right \}  $ of unitary elements of $%
{\displaystyle \prod_{i\in I}}
\mathcal{B}_{i}$ such that
\[
\left \Vert U_{\lambda}S-SU_{\lambda}\right \Vert \rightarrow0
\]
for every $S\in \mathcal{E}$ and such that%
\[
dist\left(  A,\mathcal{E}\right)  \leq \lim_{\lambda}\left \Vert U_{\lambda
}A-AU_{\lambda}\right \Vert .
\]
If $J\subseteq I$ and $S=\left \{  S_{i}\right \}  \in$ $%
{\displaystyle \prod_{i\in I}}
\mathcal{B}_{i}$, we define $P_{J}S=\left \{  S_{i}^{\prime}\right \}  \in%
{\displaystyle \prod_{i\in I}}
\mathcal{B}_{i}$, where
\[
S_{i}^{^{\prime}}=\left \{
\begin{array}
[c]{cc}%
S_{i} & \text{if }i\in J\\
0 & \text{if }i\notin J
\end{array}
\right.  .
\]
Since $A\in \mathcal{A}$, it follows, for every finite subset $J\subseteq I$,
that $P_{J}A\in \sum_{i\in I}\mathcal{A}_{i}\subseteq \mathcal{E}$. Hence, for
every finite $J\subseteq I$, we have%
\[
\lim \left \Vert U_{\lambda}A-AU_{\lambda}\right \Vert =\lim \left \Vert
U_{\lambda}P_{I\backslash J}A-P_{I\backslash J}AU_{\lambda}\right \Vert .
\]
Suppose $F\subseteq \mathcal{D}$ is finite and $\varepsilon>0$. We write
$U_{\lambda}=\left \{  U_{\lambda}\left(  i\right)  \right \}  $ and, for each
$D\in F$, we write $D=\left \{  D_{i}\right \}  $. It follows that the set
$I_{\left(  F,\varepsilon \right)  }$ of $i\in I$ for which there is a unitary
$W_{i}\in \mathcal{B}_{i}$ with
\[
\max_{D\in F}\left \Vert W_{i}D_{i}-D_{i}W_{i}\right \Vert <\varepsilon \text{
and}%
\]%
\[
dist\left(  A,\mathcal{E}\right)  \leq \left \Vert W_{i}A_{i}-A_{i}%
W_{i}\right \Vert +\varepsilon
\]
must be infinite. Hence we can choose a unitary $W_{\left(  F,\varepsilon
\right)  }=\left \{  W_{\left(  F,\varepsilon \right)  }\left(  i\right)
\right \}  $ so that%
\[
W_{\left(  F,\varepsilon \right)  }\left(  i\right)  =\left \{
\begin{array}
[c]{cc}%
W_{i} & \text{if }i\in I_{\left(  F,\varepsilon \right)  }\\
1 & \text{otherwise}%
\end{array}
\right.  .
\]
It follows that%
\[
\max_{D\in F}\left \Vert DW_{\left(  F,\varepsilon \right)  }-W_{\left(
F,\varepsilon \right)  }D\right \Vert <\varepsilon
\]
and%
\[
dist\left(  A,\mathcal{E}\right)  \leq \left \Vert \pi \left(  W_{\left(
F,\varepsilon \right)  }A-AW_{\left(  F,\varepsilon \right)  }\right)
\right \Vert =\left \Vert \pi \left(  W_{\left(  F,\varepsilon \right)  }\right)
T-T\pi \left(  W_{\left(  F,\varepsilon \right)  }\right)  \right \Vert .
\]
It follows that $\left \{  \pi \left(  W_{\left(  F,\varepsilon \right)
}\right)  \right \}  $ is a net of unitary elements of $%
{\displaystyle \prod_{i\in I}}
\mathcal{B}_{i}/\sum_{i\in I}\mathcal{B}_{i}$ such that%
\[
\left \Vert \pi \left(  W_{\left(  F,\varepsilon \right)  }\right)  S-S\pi \left(
W_{\left(  F,\varepsilon \right)  }\right)  \right \Vert \rightarrow0
\]
for every $S\in \pi \left(  \mathcal{D}\right)  $ and such that%
\[
dist\left(  T,C^{\ast}\left(  \pi \left(  \mathcal{D}\right)  \cup
\mathcal{Z}\left(
{\displaystyle \prod_{i\in I}}
\mathcal{B}_{i}/\sum_{i\in I}\mathcal{B}_{i}\right)  \right)  \right)
=dist\left(  A,\mathcal{E}\right)  \leq
\]%
\[
\limsup_{\left(  F,\varepsilon \right)  }\left \Vert \pi \left(  W_{\left(
F,\varepsilon \right)  }\right)  T-T\pi \left(  W_{\left(  F,\varepsilon \right)
}\right)  \right \Vert \leq
\]%
\[
d_{an}\left(  T,C^{\ast}\left(  \pi \left(  \mathcal{D}\right)  \cup
\mathcal{Z}\left(
{\displaystyle \prod_{i\in I}}
\mathcal{B}_{i}/\sum_{i\in I}\mathcal{B}_{i}\right)  \right)  ,%
{\displaystyle \prod_{i\in I}}
\mathcal{B}_{i}/\sum_{i\in I}\mathcal{B}_{i}\right)  .
\]
The fact that $K_{n}\left(  \mathcal{A}\right)  \leq1$ (by part $\left(
5\right)  $) implies, reasoning as in the proof of Lemma \ref{rel}, we see
that
\[
K_{an}\left(  C^{\ast}\left(  \pi \left(  \mathcal{D}\right)  \cup
\mathcal{Z}\left(
{\displaystyle \prod_{i\in I}}
\mathcal{B}_{i}/\sum_{i\in I}\mathcal{B}_{i}\right)  \right)  \right)  \leq4.
\]
The argument for ultraproducts is the same except for considering finite
subsets $J\subseteq I$ we consider subsets $J$ not in the ultrafilter $\alpha
$, which shows that $I_{\left(  F,\varepsilon \right)  }\in \alpha$.

$\left(  9\right)  .$ This follows using arguments in the proof of
\cite[Theorem 4]{H4}.
\end{proof}

\bigskip

\bigskip

\bigskip

\section{Representations}

In \cite{AP} C. Akemann and G. Pedersen showed that central sequences from a
quotient $\mathcal{B}/\mathcal{J}$ can be lifted to a central sequence in
$\mathcal{B}$. The ideas in their proof can be used here. Recall from
\cite{AP} and \cite{Arv} that if $\mathcal{J}$ is a closed ideal in a unital
C*-algebra $\mathcal{B}$ there is a \emph{quasicentral approximate unit},
i.e., a net $\left \{  e_{\lambda}\right \}  _{\lambda \in \Lambda}$ in
$\mathcal{J}$ such that

\begin{enumerate}
\item $0\leq e_{\lambda}\leq1$ for every $\lambda \in \Lambda$,

\item $\left \Vert \left(  1-e_{\lambda}\right)  x\right \Vert +\left \Vert
x\left(  1-e_{\lambda}\right)  \right \Vert \rightarrow0$ for every
$x\in \mathcal{J}$,

\item $\left \Vert be_{\lambda}-e_{\lambda}b\right \Vert \rightarrow0$ for every
$b\in \mathcal{B}$.
\end{enumerate}

\bigskip

It is well-known \cite{AP} that if $\pi:\mathcal{B\ }\rightarrow
\mathcal{B}/\mathcal{J}$ is the quotient homomorphism, then $\left \Vert
\left(  1-e_{\lambda}\right)  b\right \Vert \rightarrow \left \Vert \pi \left(
b\right)  \right \Vert $ for every $b\in \mathcal{B}$.

\begin{theorem}
\label{lift}Suppose $\mathcal{B}$ and $\mathcal{E}$ are unital C*-algebras and
$\pi:\mathcal{B}\rightarrow \mathcal{E}$ is a unital surjective $\ast
$-homomorphism. If $\mathcal{S}\subseteq \mathcal{B}$, then%
\[
\pi \left(  Appr\left(  \mathcal{S},\mathcal{B}\right)  ^{\prime \prime}\right)
\subseteq Appr\left(  \pi \left(  S\right)  ,\mathcal{E}\right)  ^{\prime
\prime}.
\]

\end{theorem}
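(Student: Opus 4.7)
My plan is to lift asymptotically commuting nets from $\mathcal{E}$ back to $\mathcal{B}$. Fix $T\in\mathrm{Appr}(\mathcal{S},\mathcal{B})''$ and a bounded net $\{e_\mu\}_{\mu\in M}$ in $\mathcal{E}$ with $\|\pi(S)e_\mu-e_\mu\pi(S)\|\to 0$ for every $S\in\mathcal{S}$; the goal is $\|\pi(T)e_\mu-e_\mu\pi(T)\|\to 0$. I aim to produce a bounded net $\{a_\gamma\}_{\gamma\in\Gamma}$ in $\mathcal{B}$, asymptotically commuting with every $S\in\mathcal{S}$, such that $\pi(a_\gamma)=e_{\mu(\gamma)}$ for a monotone cofinal map $\gamma\mapsto\mu(\gamma)$ into $M$. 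Granted this, the hypothesis on $T$ forces $\|Ta_\gamma-a_\gamma T\|\to 0$, and contracting by $\pi$ yields $\|\pi(T)e_{\mu(\gamma)}-e_{\mu(\gamma)}\pi(T)\|\to 0$. Since the same construction applies to any subnet of $\{e_\mu\}$, the standard subnet criterion (a net converges iff every subnet has a further convergent subnet) delivers $\|\pi(T)e_\mu-e_\mu\pi(T)\|\to 0$ along the original net.

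To build the lift I follow Akemann and Pedersen. Let $\mathcal{J}=\ker\pi$ and let $\{f_\lambda\}_{\lambda\in\Lambda}$ be a quasicentral approximate unit in $\mathcal{J}$, as recalled just before the theorem. Choose for each $\mu$ a norm-bounded preimage $b_\mu\in\mathcal{B}$ of $e_\mu$, and set
\[
a_{\mu,\lambda}=(1-f_\lambda)^{1/2}\, b_\mu\,(1-f_\lambda)^{1/2}.
\]
Since $f_\lambda\in\mathcal{J}$, we have $\pi(a_{\mu,\lambda})=e_\mu$. Expanding $[S,a_{\mu,\lambda}]$ via the Leibniz identity and invoking two standard consequences of quasicentrality --- (a) $\|[S,(1-f_\lambda)^{1/2}]\|\to 0$ in $\lambda$ for each $S\in\mathcal{B}$, obtained by polynomial approximation of the square root applied to $\|[S,f_\lambda]\|\to 0$, and (b) $\|(1-f_\lambda)^{1/2}x(1-f_\lambda)^{1/2}\|\to\|\pi(x)\|$ in $\lambda$ for each $x\in\mathcal{B}$, derived from the Akemann--Pedersen identity $\|(1-f_\lambda)b\|\to\|\pi(b)\|$ together with $\|j(1-f_\lambda)^{1/2}\|\to 0$ for $j\in\mathcal{J}$ --- one obtains
\[
\limsup_\lambda\|[S,a_{\mu,\lambda}]\|=\|\pi([S,b_\mu])\|=\|[\pi(S),e_\mu]\|.
\]

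To package this into a genuine net, let $\Gamma$ be the directed set of triples $\gamma=(\mu,F,\varepsilon)$ with $\mu\in M$, $F\subseteq\mathcal{S}$ finite, and $\varepsilon>0$, ordered componentwise (with $\varepsilon$ reversed). Using the limsup above, choose $\lambda(\gamma)$ so that $\|[S,a_{\mu,\lambda(\gamma)}]\|\le\|[\pi(S),e_\mu]\|+\varepsilon$ for all $S\in F$, and set $a_\gamma=a_{\mu,\lambda(\gamma)}$. The projection $\gamma\mapsto\mu(\gamma)=\mu$ is monotone cofinal, so $\{e_{\mu(\gamma)}\}$ is an honest subnet of $\{e_\mu\}$; and for each fixed $S\in\mathcal{S}$ one has $S\in F$ eventually, $\varepsilon$ small, and $\|[\pi(S),e_\mu]\|\to 0$, giving $\|[S,a_\gamma]\|\to 0$. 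Feeding $\{a_\gamma\}$ into the definition of $\mathrm{Appr}(\mathcal{S},\mathcal{B})''$ and pushing through $\pi$ then finishes the proof by the subnet argument of paragraph one. The main obstacle is not any single estimate but the bookkeeping: engineering $\lambda(\gamma)$ so that the commutators with all relevant $S$ are simultaneously controlled, keeping $\pi(a_\gamma)$ a cofinal subnet of $\{e_\mu\}$, and closing the subnet criterion on the original net; the quasicentrality inputs (a) and (b) are the only nontrivial analytic ingredients.
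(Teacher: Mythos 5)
Your argument is correct, and its analytic engine is exactly the paper's: a quasicentral approximate unit for $\ker\pi$ plus the Akemann--Pedersen norm identity, used to lift approximately central elements across the quotient. The difference is organizational. The paper argues by contraposition: assuming $\pi(x)\notin \mathrm{Appr}(\pi(\mathcal{S}),\mathcal{E})^{\prime\prime}$, it picks, for each finite $\mathcal{F}\subseteq\mathcal{S}$ and $\eta>0$, a single witness $y$ with $\|\pi(y)\|<1$, small commutators with $\pi(\mathcal{F})$, and commutator with $\pi(x)$ bounded below by a fixed $\varepsilon$, and lifts it to $(1-e_\lambda)y$; one bounded net upstairs then shows $x\notin \mathrm{Appr}(\mathcal{S},\mathcal{B})^{\prime\prime}$, and no subnet considerations ever arise. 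You instead lift the entire given net $\{e_\mu\}$, which is what forces the $(\mu,F,\varepsilon)$ diagonalization and the closing subnet step; both are handled correctly (the subnet criterion applies because your construction runs verbatim on any subnet of $\{e_\mu\}$ -- and in fact you could bypass it, since $\|[\pi(T),e_{\mu(\gamma)}]\|$ depends only on $\mu(\gamma)$ and for any $\mu\geq\mu_0$ the index $(\mu,F_0,\varepsilon_0)$ dominates $\gamma_0=(\mu_0,F_0,\varepsilon_0)$, so convergence along $\Gamma$ already yields convergence along $M$). Two points worth making explicit: the preimages $b_\mu$ must be chosen uniformly bounded (possible because C*-quotient maps admit norm-preserving lifts and $\{e_\mu\}$ is bounded), so that $\{a_\gamma\}$ is a bounded net as the definition requires; and the symmetric lift $(1-f_\lambda)^{1/2}b_\mu(1-f_\lambda)^{1/2}$ works but the one-sided $(1-f_\lambda)b_\mu$ of the paper gives the same estimates with less effort. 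What the contrapositive buys the paper is brevity; what your direct route buys is a genuine lifting statement of independent interest -- every bounded net asymptotically commuting with $\pi(\mathcal{S})$ is, after passing to a subnet, the $\pi$-image of a bounded net asymptotically commuting with $\mathcal{S}$.
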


\begin{proof}
Let $\left \{  e_{\lambda}\right \}  _{\lambda \in \Lambda}$ be a quasicentral
approximate unit for $\ker \pi$. Then, for every $x,y\in \mathcal{B}$,%
\[
\left \Vert \left(  1-e_{\lambda}\right)  x\right \Vert \rightarrow \left \Vert
\pi \left(  x\right)  \right \Vert ,
\]
and%
\[
\left \Vert \left[  \left(  1-e_{\lambda}\right)  x\right]  y-y\left[  \left(
1-e_{\lambda}\right)  x\right]  \right \Vert \rightarrow \left \Vert \pi \left(
x\right)  \pi \left(  y\right)  -\pi \left(  y\right)  \pi \left(  x\right)
\right \Vert .
\]
The second follows from the first statement and
\[
\left \Vert y\left(  1-e_{\lambda}\right)  -\left(  1-e_{\lambda}\right)
y\right \Vert \rightarrow0.
\]
Suppose $x\in \mathcal{B}$ and $\pi \left(  x\right)  \notin Appr\left(
\pi \left(  S\right)  ,\mathcal{E}\right)  ^{\prime \prime}$. Then there is an
$\varepsilon>0$ such that for every finite subset $\mathcal{F}$ of
$\mathcal{S}$ and every $\eta>0$ there is a $y\in \mathcal{B}$ such that%
\[
\left \Vert \pi \left(  y\right)  \right \Vert <1
\]%
\[
\left \Vert \pi \left(  y\right)  \pi \left(  w\right)  -\pi \left(  w\right)
\pi \left(  y\right)  \right \Vert <\eta
\]
for every $w\in \mathcal{S}$ and
\[
\left \Vert \pi \left(  y\right)  \pi \left(  x\right)  -\pi \left(  x\right)
\pi \left(  y\right)  \right \Vert >\varepsilon \text{.}%
\]
It follows from the above remarks that there is a $\lambda \in \Lambda$ such
that if $y_{\left(  \mathcal{F},\eta \right)  }=\left(  1-e_{\lambda}\right)
y$, then%
\[
\left \Vert y_{\left(  \mathcal{F},\eta \right)  }\right \Vert <1,
\]%
\[
\left \Vert y_{\left(  \mathcal{F},\eta \right)  }w-wy_{\left(  \mathcal{F}%
,\eta \right)  }\right \Vert <\eta
\]
for every $w\in \mathcal{S}$, and%
\[
\left \Vert y_{\left(  \mathcal{F},\eta \right)  }x-xy_{\left(  \mathcal{F}%
,\eta \right)  }\right \Vert >\varepsilon.
\]
Then $\left \{  y_{\left(  \mathcal{F},\eta \right)  }\right \}  $ is a bounded
net such that $\left \Vert y_{\left(  \mathcal{F},\eta \right)  }w-wy_{\left(
\mathcal{F},\eta \right)  }\right \Vert \rightarrow0$ for every $w\in
\mathcal{S}$ and such that $\left \Vert y_{\left(  \mathcal{F},\eta \right)
}x-xy_{\left(  \mathcal{F},\eta \right)  }\right \Vert \nrightarrow0$. Hence
$x\notin Appr\left(  \mathcal{S},\mathcal{B}\right)  ^{\prime \prime}$.
\end{proof}

\bigskip \bigskip

It is easy to show that a direct product of unital centrally prime C*-algebras
is centrally prime. The following result shows that the same is not true for
subdirect products. This gives a way to construct examples of commutative
unital C*-subalgebras of a C*-algebra $\mathcal{B}$ for which $Appr\left(
\mathcal{A},\mathcal{B}\right)  ^{\prime \prime}$ is much larger than $C^{\ast
}\left(  \mathcal{A}\cup \mathcal{Z}\left(  \mathcal{B}\right)  \right)  .$
Note that in the following lemma the algebra $\mathcal{A}$ is not assumed to
be selfadjoint.

\bigskip

\begin{lemma}
\label{subdirect}Suppose $\mathcal{B}_{1}$, $\mathcal{B}_{2},\ldots
,\mathcal{B}_{n}$ are unital C*-algebras and $\mathcal{B}\subseteq
\mathcal{B}_{1}\oplus \mathcal{B}_{2}\oplus \cdots \oplus \mathcal{B}_{n}$ is a
unital C*-algebra such that the coordinate projection $\pi_{j}:\mathcal{B}%
\rightarrow \mathcal{B}_{j}$ is surjective for $j=1,2,\ldots,n$. Then, for
every unital norm closed subalgebra $\mathcal{A}$ of $\mathcal{B}$, we have%
\[
Appr\left(  \mathcal{A},\mathcal{B}\right)  ^{\prime \prime}=\left[
Appr\left(  \pi_{1}\left(  \mathcal{A}\right)  ,\mathcal{B}_{1}\right)
^{\prime \prime}\oplus \cdots \oplus Appr\left(  \pi_{n}\left(  \mathcal{A}%
\right)  ,\mathcal{B}_{n}\right)  ^{\prime \prime}\right]  \cap \mathcal{B}.
\]
\bigskip
\end{lemma}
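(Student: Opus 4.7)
The plan is to prove the two containments separately; the $\subseteq$ direction is essentially an immediate consequence of Theorem \ref{lift}, while the $\supseteq$ direction is a straightforward coordinate-wise argument exploiting the fact that $\mathcal{B}$ sits inside a finite direct sum (so the norm is a maximum of coordinates, and a finite maximum of null nets is null).

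For the $\subseteq$ direction, suppose $T \in Appr(\mathcal{A},\mathcal{B})''$. Fix $j \in \{1,\ldots,n\}$. Since $\pi_j : \mathcal{B} \to \mathcal{B}_j$ is a surjective unital $\ast$-homomorphism, Theorem \ref{lift} applied with $\mathcal{S} = \mathcal{A}$ gives $\pi_j(T) \in Appr(\pi_j(\mathcal{A}),\mathcal{B}_j)''$. Doing this for each $j$ shows that $T$, viewed in $\mathcal{B}_1 \oplus \cdots \oplus \mathcal{B}_n$, lies in $Appr(\pi_1(\mathcal{A}),\mathcal{B}_1)'' \oplus \cdots \oplus Appr(\pi_n(\mathcal{A}),\mathcal{B}_n)''$, and by assumption $T \in \mathcal{B}$, establishing the inclusion.

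For the $\supseteq$ direction, suppose $T = T_1 \oplus \cdots \oplus T_n \in \mathcal{B}$ with each $T_j \in Appr(\pi_j(\mathcal{A}),\mathcal{B}_j)''$. Let $\{A_\lambda\}$ be a bounded net in $\mathcal{B}$ with $\|SA_\lambda - A_\lambda S\| \to 0$ for every $S \in \mathcal{A}$. Writing $A_\lambda = A_{\lambda,1} \oplus \cdots \oplus A_{\lambda,n}$, each $\{A_{\lambda,j}\} = \{\pi_j(A_\lambda)\}$ is a bounded net in $\mathcal{B}_j$, and for every $S \in \mathcal{A}$,
\[
\|\pi_j(S) A_{\lambda,j} - A_{\lambda,j} \pi_j(S)\| = \|\pi_j(SA_\lambda - A_\lambda S)\| \leq \|SA_\lambda - A_\lambda S\| \to 0.
\]
Since every element of $\pi_j(\mathcal{A})$ has the form $\pi_j(S)$, the net $\{A_{\lambda,j}\}$ asymptotically commutes with all of $\pi_j(\mathcal{A})$. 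The hypothesis $T_j \in Appr(\pi_j(\mathcal{A}),\mathcal{B}_j)''$ then gives $\|T_j A_{\lambda,j} - A_{\lambda,j} T_j\| \to 0$. Because the norm on $\mathcal{B} \subseteq \mathcal{B}_1 \oplus \cdots \oplus \mathcal{B}_n$ is the coordinate maximum,
\[
\|TA_\lambda - A_\lambda T\| = \max_{1 \leq j \leq n} \|T_j A_{\lambda,j} - A_{\lambda,j} T_j\| \to 0,
\]
so $T \in Appr(\mathcal{A},\mathcal{B})''$.

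There is no real obstacle here; the only point requiring a moment's thought is that in the $\supseteq$ direction one is starting from a net in $\mathcal{B}$ (not from independent nets in each $\mathcal{B}_j$), but this is harmless since the projected nets $\{\pi_j(A_\lambda)\}$ inherit all the required asymptotic commutation. The surjectivity of the $\pi_j$ is used implicitly to ensure that $\pi_j(\mathcal{A})$ is well-defined as a subalgebra of $\mathcal{B}_j$ and that Theorem \ref{lift} applies in the $\subseteq$ direction; self-adjointness of $\mathcal{A}$ is not needed anywhere, consistent with the remark preceding the lemma.
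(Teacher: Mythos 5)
Your proof is correct and follows essentially the same route as the paper: the forward inclusion via Theorem \ref{lift} applied to each surjective coordinate projection $\pi_j$, and the reverse inclusion by projecting an asymptotically commuting net coordinate-wise and using that the norm on the finite direct sum is the coordinate maximum. No gaps.
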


\begin{proof}
It follows from Theorem \ref{lift} and the surjectivity of $\pi_{j}$ that
\[
\pi_{j}\left(  Appr\left(  \mathcal{A},\mathcal{B}\right)  ^{\prime \prime
}\right)  \subseteq Appr\left(  \pi_{j}\left(  \mathcal{A}\right)
,\mathcal{B}_{j}\right)  ^{\prime \prime}%
\]
for $j=1,2,\ldots,n$. Hence%
\[
Appr\left(  \mathcal{A},\mathcal{B}\right)  ^{\prime \prime}\subseteq \left[
Appr\left(  \pi_{1}\left(  \mathcal{A}\right)  ,\mathcal{B}_{1}\right)
^{\prime \prime}\oplus \cdots \oplus Appr\left(  \pi_{n}\left(  \mathcal{A}%
\right)  ,\mathcal{B}_{n}\right)  ^{\prime \prime}\right]  \cap \mathcal{B}%
\text{.}%
\]
Next suppose $b_{j}\in Appr\left(  \pi_{j}\left(  \mathcal{A}\right)
,\mathcal{B}_{j}\right)  ^{\prime \prime}$ for $j=1,2,\ldots,n$ and
$b=b_{1}\oplus b_{2}\oplus \cdots \oplus b_{n}\in \mathcal{B}$. Suppose $\left \{
x_{\lambda}=x_{\lambda,1}\oplus x_{\lambda,2}\oplus \cdots \oplus x_{\lambda
,n}\right \}  $ is a bounded net in $\mathcal{B}$ such that, for every
$a=\pi_{1}\left(  a\right)  \oplus \pi_{2}\left(  a\right)  \oplus \cdots
\oplus \pi_{n}\left(  a\right)  \in \mathcal{A}$,
\[
\left \Vert ax_{\lambda}-x_{\lambda}a\right \Vert \rightarrow0.
\]
Then
\[
\left \Vert \pi_{j}\left(  a\right)  x_{\lambda,j}-x_{\lambda,j}\pi_{j}\left(
a\right)  \right \Vert \rightarrow0\text{ for }1\leq j\leq n.
\]
Hence, for $1\leq j\leq n$, $\left \{  x_{\lambda,j}\right \}  $ is a bounded
net in $\mathcal{B}_{j}$ such that, for every $c\in \pi \left(  \mathcal{A}%
_{j}\right)  $%
\[
\left \Vert x_{\lambda,j}c-cx_{\lambda,j}\right \Vert \rightarrow0.
\]
Hence%
\[
\left \Vert x_{\lambda}b-bx_{\lambda}\right \Vert =\left \Vert \left(
b_{1}x_{\lambda,1}-x_{\lambda,1}b_{1}\right)  \oplus \cdots \oplus \left(
b_{n}x_{\lambda,n}-x_{\lambda,n}b_{n}\right)  \right \Vert \rightarrow0.
\]
Hence $b\in Appr\left(  \mathcal{A},\mathcal{B}\right)  ^{\prime \prime}$.
\bigskip
\end{proof}

\begin{corollary}
Suppose $\mathcal{B}_{1}$, $\mathcal{B}_{2},\ldots,\mathcal{B}_{n}$ are unital
centrally prime C*-algebras and $\mathcal{B}\subseteq \mathcal{B}_{1}%
\oplus \cdots \oplus \mathcal{B}_{n}$ is a unital C*-algebra such that the
coordinate projection $\pi_{j}:\mathcal{B}\rightarrow \mathcal{B}_{j}$ is
surjective for $j=1,2,\ldots,n$. Then, for every unital commutative
C*-subalgebra $\mathcal{A}$ of $\mathcal{B}$, we have%
\[
Appr\left(  \mathcal{A},\mathcal{B}\right)  ^{\prime \prime}=\left[  C^{\ast
}\left(  \pi_{1}\left(  \mathcal{A}\right)  \cup \mathcal{Z}\left(
\mathcal{B}_{1}\right)  \right)  \oplus \cdots \oplus C^{\ast}\left(  \pi
_{n}\left(  \mathcal{A}\right)  \cup \mathcal{Z}\left(  \mathcal{B}_{n}\right)
\right)  \right]  \cap \mathcal{B}%
\]

\end{corollary}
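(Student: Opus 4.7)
The plan is to combine Lemma~\ref{subdirect} with the corollary to Proposition~\ref{Macapp} that treats a single commutative subalgebra of a centrally prime C*-algebra. By Lemma~\ref{subdirect},
\[
Appr\left(\mathcal{A},\mathcal{B}\right)^{\prime\prime}=\left[Appr\left(\pi_{1}\left(\mathcal{A}\right),\mathcal{B}_{1}\right)^{\prime\prime}\oplus\cdots\oplus Appr\left(\pi_{n}\left(\mathcal{A}\right),\mathcal{B}_{n}\right)^{\prime\prime}\right]\cap\mathcal{B},
\]
so it suffices to show, for each $j$, that
\[
Appr\left(\pi_{j}\left(\mathcal{A}\right),\mathcal{B}_{j}\right)^{\prime\prime}=C^{\ast}\left(\pi_{j}\left(\mathcal{A}\right)\cup\mathcal{Z}\left(\mathcal{B}_{j}\right)\right).
\]

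To prove this single-factor identity, I would set $\mathcal{A}_{j}=C^{\ast}\left(\pi_{j}\left(\mathcal{A}\right)\cup\mathcal{Z}\left(\mathcal{B}_{j}\right)\right)$. Since $\mathcal{A}$ is a commutative C*-subalgebra, $\pi_{j}(\mathcal{A})$ is a commutative selfadjoint family in $\mathcal{B}_{j}$; elements of $\mathcal{Z}(\mathcal{B}_{j})$ commute with everything and are selfadjoint, so $\mathcal{A}_{j}$ is a unital commutative C*-subalgebra of $\mathcal{B}_{j}$ that contains $\mathcal{Z}(\mathcal{B}_{j})$. Because $\mathcal{B}_{j}$ is centrally prime, the corollary to Proposition~\ref{Macapp} (taking $\mathcal{W}=\mathbb{C}$) applies and gives that $\mathcal{A}_{j}$ is approximately normal, i.e.\ $Appr\left(\mathcal{A}_{j},\mathcal{B}_{j}\right)^{\prime\prime}=\mathcal{A}_{j}$.

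It remains to compare the approximate double commutants of $\pi_{j}(\mathcal{A})$ and of $\mathcal{A}_{j}$. The inclusion $\mathcal{A}_{j}\subseteq Appr\left(\pi_{j}\left(\mathcal{A}\right),\mathcal{B}_{j}\right)^{\prime\prime}$ is the general fact recorded in the introduction: $Appr\left(\mathcal{S},\mathcal{B}\right)^{\prime\prime}$ is a norm closed unital C*-algebra (when $\mathcal{S}=\mathcal{S}^{\ast}$) containing $\mathcal{S}\cup\mathcal{Z}\left(\mathcal{B}\right)$. For the reverse inclusion, observe that any bounded net $\{W_{\lambda}\}$ in $\mathcal{B}_{j}$ that asymptotically commutes with every element of $\mathcal{A}_{j}$ automatically asymptotically commutes with every element of the smaller set $\pi_{j}(\mathcal{A})$; therefore the defining condition for $Appr\left(\pi_{j}\left(\mathcal{A}\right),\mathcal{B}_{j}\right)^{\prime\prime}$ is a priori more restrictive, yielding $Appr\left(\pi_{j}\left(\mathcal{A}\right),\mathcal{B}_{j}\right)^{\prime\prime}\subseteq Appr\left(\mathcal{A}_{j},\mathcal{B}_{j}\right)^{\prime\prime}=\mathcal{A}_{j}$.

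This pins down the approximate double commutant in each factor, and substituting into the formula from Lemma~\ref{subdirect} finishes the proof. There is no real obstacle here; the only conceptual point is noticing that replacing $\pi_{j}(\mathcal{A})$ by the enlarged (but still commutative) $\mathcal{A}_{j}$ costs nothing in the containment direction while bringing us into the exact hypothesis of the corollary to Proposition~\ref{Macapp}. All the delicate Bishop--Stone--Weierstrass work has already been done upstream in Proposition~\ref{Macapp}.
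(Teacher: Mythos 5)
Your proof is correct and follows the paper's intended route: the paper states this corollary as an immediate consequence of Lemma \ref{subdirect} together with the corollary to Proposition \ref{Macapp} (with $\mathcal{W}=\mathbb{C}$), exactly as you combine them. Your explicit verification that $Appr\left(\pi_{j}\left(\mathcal{A}\right),\mathcal{B}_{j}\right)^{\prime\prime}=C^{\ast}\left(\pi_{j}\left(\mathcal{A}\right)\cup\mathcal{Z}\left(\mathcal{B}_{j}\right)\right)$ via the two inclusions (monotonicity plus the general containment of $\mathcal{S}\cup\mathcal{Z}$) correctly fills in the step the paper leaves implicit.
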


\begin{example}
Let $S$ be the unilateral shift operator on $\ell^{2}$, and let $\mathcal{B}%
=C^{\ast}\left(  S^{\ast}\oplus S\right)  .$ It follows that $\mathcal{K}%
\left(  \ell^{2}\right)  \oplus \mathcal{K}\left(  \ell^{2}\right)
\subseteq \mathcal{B}\neq C^{\ast}\left(  \mathcal{S}^{\ast}\right)  \oplus
C^{\ast}\left(  S\right)  $ and $\mathcal{Z}\left(  \mathcal{B}\right)
=\mathbb{C}1\subseteq \mathcal{A}$. If $0\neq A=A^{\ast}\in \mathcal{K}\left(
\ell^{2}\right)  $ and $\mathcal{A}=C^{\ast}\left(  A\oplus A\right)  ,$ then
$\mathcal{A}$ a unital commutative C*-subalgebra of $\mathcal{B}$,
$\mathcal{Z}\left(  \mathcal{B}\right)  \subseteq \mathcal{A}$, but
\[
Appr\left(  \mathcal{A},\mathcal{B}\right)  ^{\prime \prime}=C^{\ast}\left(
A\right)  \oplus C^{\ast}\left(  A\right)  ,
\]
which is much larger than $\mathcal{A}$.
\end{example}

\section{C*-algebraic Stone-Weierstrass and Continuous Fields}

Here is our main result in this section. The proof is based on the factor
state version of the Stone-Weierstrass theorem of Longo \cite{L}, Popa
\cite{P},(and Teleman \cite{T}).

\begin{theorem}
\label{SWapp}Suppose $\mathcal{B}$ is a unital separable C*-algebra and
$\mathcal{A}$ is a unital C*-subalgebra of $\mathcal{B}$ with $\mathcal{Z}%
\left(  \mathcal{B}\right)  \subseteq \mathcal{A}$. Suppose also $\left \{
\mathcal{J}_{i}:i\in I\right \}  $ is a family of closed two-sided ideals of
$\mathcal{B}$ such that

\begin{enumerate}
\item If $i\neq j$ are in $I$, then%
\[
\left(  \mathcal{A}\cap \mathcal{J}_{i}\right)  +\left(  \mathcal{A}%
\cap \mathcal{J}_{j}\right)  =\mathcal{A}%
\]

\item $\mathcal{A}/\left(  \mathcal{A}\cap \mathcal{J}_{i}\right)  $ is
approximately normal in $\mathcal{B}/\mathcal{J}_{i}$ for each $i\in I$.

\item If $\mathcal{J}$ is a primitive ideal in $\mathcal{B}$, then there is an
$i\in I$ such that $\mathcal{J}\subseteq \mathcal{J}_{i}$.
\end{enumerate}

Then $\mathcal{A}$ is approximately normal in $\mathcal{B}$.
\end{theorem}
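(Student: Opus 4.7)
Set $D=Appr\left(  \mathcal{A},\mathcal{B}\right)  ^{\prime \prime}$. Since $\mathcal{A}=\mathcal{A}^{\ast}$, $D$ is a unital C*-subalgebra of $\mathcal{B}$ containing $\mathcal{A}$, and the assertion is equivalent to $D=\mathcal{A}$. The plan combines Theorem~\ref{lift} (which localizes information to each quotient $\mathcal{B}/\mathcal{J}_{i}$) with the Longo--Popa--Teleman factor-state version of the Stone--Weierstrass theorem (which glues the local information). Separability of $\mathcal{B}$ is used precisely to have the factor-state Stone--Weierstrass theorem at our disposal.

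For the localization step, fix $i\in I$ and apply Theorem~\ref{lift} to the quotient map $\pi_{i}\colon \mathcal{B}\rightarrow\mathcal{B}/\mathcal{J}_{i}$, obtaining
$$
\pi_{i}(D)\subseteq Appr\left(  \pi_{i}(\mathcal{A}),\mathcal{B}/\mathcal{J}_{i}\right)^{\prime\prime}=\pi_{i}(\mathcal{A}),
$$
the equality by hypothesis~(2). Hence, for every $T\in D$ and every $i\in I$, there exists $a_{i}\in\mathcal{A}$ with $T-a_{i}\in\mathcal{J}_{i}$. This reduces the problem to one of gluing the $a_{i}$'s into a single element of $\mathcal{A}$.

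For the gluing step, I would apply the Longo--Popa--Teleman theorem to the inclusion $\mathcal{A}\subseteq D$ (both separable unital C*-algebras); it then suffices to show that $\mathcal{A}$ separates the factor states of $D$. Let $\phi_{1},\phi_{2}$ be factor states of $D$ with $\phi_{1}|_{\mathcal{A}}=\phi_{2}|_{\mathcal{A}}$; the aim is $\phi_{1}=\phi_{2}$. Extending each $\phi_{k}$ to a state of $\mathcal{B}$ via Hahn--Banach and reducing to the factorial case by central decomposition of the GNS representation, one obtains factor states $\tilde\phi_{k}$ of $\mathcal{B}$ whose GNS kernels involve primitive ideals $\mathcal{P}_{k}\subseteq\mathcal{B}$. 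Hypothesis~(3) supplies indices $i(k)\in I$ with $\mathcal{P}_{k}\subseteq\mathcal{J}_{i(k)}$, placing each $\tilde\phi_{k}$ effectively inside the quotient $\mathcal{B}/\mathcal{J}_{i(k)}$, where by the localization step $\pi_{i(k)}(T)\in \pi_{i(k)}(\mathcal{A})$. Hypothesis~(1)---a Chinese-remainder-style condition---is then invoked to reconcile the local witnesses $a_{i(1)},a_{i(2)}\in\mathcal{A}$ modulo $\mathcal{A}\cap(\mathcal{J}_{i(1)}+\mathcal{J}_{i(2)})$ so that the common value $\phi_{1}|_{\mathcal{A}}=\phi_{2}|_{\mathcal{A}}$ determines $\phi_{k}(T)$, forcing $\phi_{1}(T)=\phi_{2}(T)$ for every $T\in D$.

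The principal technical obstacle is the middle part of this last step: transferring factor states of $D$ to factor representations of $\mathcal{B}$ whose GNS kernels are dominated by some $\mathcal{J}_{i}$. Separability of $\mathcal{B}$ is what makes the factor-state Stone--Weierstrass machinery applicable and ensures well-behaved central decompositions; hypothesis~(3) ensures every primitive ideal is captured by the family $\{\mathcal{J}_{i}\}$; and hypothesis~(1) is the device that makes the gluing possible. Without any of these three ingredients the local witnesses obtained in the first step cannot be patched into a single element of $\mathcal{A}$ equal to $T$.
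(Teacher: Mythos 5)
Your overall architecture is essentially the paper's: Theorem \ref{lift} together with hypothesis (2) gives the localization $\pi_i(D)\subseteq\pi_i(\mathcal{A})$, the Longo--Popa factorial Stone--Weierstrass theorem does the gluing, hypothesis (3) attaches some $\mathcal{J}_i$ to each factor representation, and hypothesis (1) is the Chinese-remainder device. Indeed your reconciliation step is sound and slightly more direct than the paper's Case 2: from $T-a_i\in\mathcal{J}_i$, $T-a_j\in\mathcal{J}_j$ and $(\mathcal{A}\cap\mathcal{J}_i)+(\mathcal{A}\cap\mathcal{J}_j)=\mathcal{A}$ one gets a single $A\in\mathcal{A}$ with $T-A\in\mathcal{J}_i\cap\mathcal{J}_j$, whereas the paper routes this through the splitting $(\rho_i\oplus\rho_j)(\mathcal{A})=\rho_i(\mathcal{A})\oplus\rho_j(\mathcal{A})$ and Theorem \ref{lift} applied to $\rho_i\oplus\rho_j$. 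Whether you apply the factorial Stone--Weierstrass theorem to $\mathcal{A}\subseteq D$ or, as the paper does, to $\mathcal{A}\subseteq C^{\ast}(\mathcal{A}\cup\{T\})$, is an immaterial variant; both algebras are separable since $\mathcal{B}$ is.

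The genuine gap is the extension step. For your gluing to work you need factor states $\tilde\phi_k$ of $\mathcal{B}$ whose restrictions to $D$ (or at least to $\mathcal{A}\cup\{T\}$) coincide with $\phi_k$, so that $\tilde\phi_1|_{\mathcal{A}}=\tilde\phi_2|_{\mathcal{A}}$ survives and the values at the witness $T$ are the original ones, and whose GNS kernels are primitive ideals of $\mathcal{B}$ so that (3) applies. ``Hahn--Banach followed by central decomposition'' does not deliver this: if $\psi_k$ is an arbitrary state extension of the factor state $\phi_k$, the factor states of $\mathcal{B}$ occurring in the central decomposition of $\pi_{\psi_k}$ restrict on $D$ only to states that \emph{integrate} to $\phi_k$; the individual components need not restrict to $\phi_k$, need not agree with one another on $\mathcal{A}$, and need not reproduce the value $\phi_k(T)$, so the separation you are trying to prove evaporates at this point. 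The statement that a factor state of a C*-subalgebra of a separable C*-algebra extends to a factor state of the whole algebra is precisely Longo's extension theorem \cite{L}, a nontrivial result which the paper invokes at exactly this juncture; it is not recoverable by soft functional-analytic means. Once you replace your reduction with an appeal to Longo's extension theorem, and note (as the paper does) that the kernels of the factor GNS representations are prime and hence primitive by \cite{Dix} because $\mathcal{B}$ is separable, your argument closes and coincides in substance with the paper's proof.
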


\begin{proof}
Assume via contradiction that $T\in Appr\left(  \mathcal{A},\mathcal{B}%
\right)  ^{\prime \prime}$ and $T\notin \mathcal{A}$. It follows from the factor
state Stone-Weierstrass theorem \cite{L}, \cite{P}, that there are factor
states $\alpha \neq \beta$ on $C^{\ast}\left(  \mathcal{A\cup}\left \{
T\right \}  \right)  $ such that $\alpha \left(  A\right)  =\beta \left(
A\right)  $ for every $A\in \mathcal{A}$. We can choose $S\in C^{\ast}\left(
\mathcal{A}\cup \left \{  T\right \}  \right)  $ so that $\alpha \left(  S\right)
\neq \beta \left(  S\right)  $. Since $Appr\left(  \mathcal{A},\mathcal{B}%
\right)  ^{\prime \prime}$ is a C*-algebra containing $\mathcal{A}\cup \left \{
T\right \}  $, we see that $S\in Appr\left(  \mathcal{A},\mathcal{B}\right)  $.
It follows from Longo's extension theorem \cite{L} that we can extend $\alpha$
and $\beta$ to factor states on $\mathcal{B}$. Let $\left(  \pi_{\alpha
},H_{\alpha},e_{\alpha}\right)  $ and $\left(  \pi_{\beta},H_{\beta},e_{\beta
}\right)  $ be the GNS representations for $\alpha$ and $\beta$, respectively.
Since $\alpha$ and $\beta$ are factor states, $\pi_{\alpha}\left(
\mathcal{B}\right)  ^{\prime \prime}$ and $\pi_{\beta}\left(  \mathcal{B}%
\right)  ^{\prime \prime}$ are factor von Neumann algebras; whence $\ker
\pi_{\alpha}$ and $\ker \pi_{\beta}$ are prime ideals, which by \cite{Dix} are
primitive. Hence there are $i,j\in I$ such that $\mathcal{J}_{i}\subseteq
\ker \pi_{\alpha}$ and $\mathcal{J}_{j}\subseteq \ker \pi_{\beta}$.

\textbf{Case 1.} $i=j.$ Define $\rho_{i}:\mathcal{B}\rightarrow \mathcal{B}%
/\mathcal{J}_{i}$ to be the quotient homomorphism. It follows that $\rho
_{i}\left(  Appr\left(  \mathcal{A},\mathcal{B}\right)  ^{\prime \prime
}\right)  \subseteq Appr\left(  \rho_{i}\left(  \mathcal{A}\right)  ,\rho
_{i}\left(  \mathcal{B}\right)  \right)  =\rho_{i}\left(  \mathcal{A}\right)
$ since $\rho_{i}\left(  \mathcal{A}\right)  =\mathcal{A}/\left(
\mathcal{A\cap J}_{i}\right)  $ is approximately normal in $\rho_{i}\left(
\mathcal{B}\right)  $. It follows that $\rho_{i}\left(  S\right)  \in \rho
_{i}\left(  \mathcal{A}\right)  $, so there is an $A\in \mathcal{A}$ such that
$S-A\in \ker \rho_{i}=\mathcal{J}_{i}$. But $\mathcal{J}_{i}\subseteq \ker
\pi_{\alpha}$ and $\mathcal{J}_{i}=\mathcal{J}_{j}\subseteq \ker \pi_{\beta}$.
Hence $\pi_{\alpha}\left(  S\right)  =\pi_{\alpha}\left(  A\right)  $ and
$\pi_{\beta}\left(  S\right)  =\pi_{\beta}\left(  A\right)  ,$ which implies
$\alpha \left(  S\right)  =\alpha \left(  A\right)  =\beta \left(  A\right)
=\beta \left(  S\right)  ,$ a contradiction. Hence this case is impossible.

\textbf{Case 2.} $i\neq j$. It follows from assumption $\left(  2\right)  $
that $\left(  \rho_{i}\oplus \rho_{j}\right)  \left(  \mathcal{A}\right)
=\rho_{i}\left(  \mathcal{A}\right)  \oplus \rho_{j}\left(  \mathcal{A}\right)
$. It follows that $\left(  \rho_{i}\oplus \rho_{j}\right)  \left(
\mathcal{B}\right)  =\rho_{i}\left(  \mathcal{B}\right)  \oplus \rho_{j}\left(
\mathcal{B}\right)  $, and we know from Theorem \ref{lift} that
\[
\left(  \rho_{i}\oplus \rho_{j}\right)  \left(  S\right)  \in \left(  \rho
_{i}\oplus \rho_{j}\right)  \left(  Appr\left(  \mathcal{A},\mathcal{B}\right)
^{\prime \prime}\right)  \subseteq Appr\left(  \rho_{i}\left(  \mathcal{A}%
\right)  \oplus \rho_{j}\left(  \mathcal{A}\right)  ,\rho_{i}\left(
\mathcal{B}\right)  \oplus \rho_{j}\left(  \mathcal{B}\right)  \right)  =
\]%
\[
Appr\left(  \rho_{i}\left(  \mathcal{A}\right)  ,\rho_{i}\left(
\mathcal{B}\right)  \right)  ^{\prime \prime}\oplus Appr\left(  \rho_{j}\left(
\mathcal{A}\right)  ,\rho_{j}\left(  \mathcal{B}\right)  \right)
^{\prime \prime}=\rho_{i}\left(  \mathcal{A}\right)  \oplus \rho_{j}\left(
\mathcal{A}\right)  =
\]%
\[
\left(  \rho_{i}\oplus \rho_{j}\right)  \left(  \mathcal{A}\right)  .
\]
Hence there is an $A\in \mathcal{A}$ such that
\[
S-A\in \ker \rho_{i}\cap \ker \rho_{j}\subseteq \ker \pi_{\alpha}\cap \ker \pi_{\beta
}.
\]
Hence,
\[
\alpha \left(  S\right)  =\alpha \left(  A\right)  =\beta \left(  A\right)
=\beta \left(  S\right)  ,
\]
which is also a contradiction.

Since Cases 1 and 2 are both impossible, our assumption that $\mathcal{A}$ is
not approximately normal must be false. This completes the proof. \bigskip
\end{proof}

\begin{corollary}
If in Theorem \ref{SWapp} we replace condition $\left(  3\right)  $ with any
one of

\begin{enumerate}
\item $\mathcal{A}$ is commutative, $\mathcal{Z}\left(  \mathcal{B}%
/\mathcal{J}_{i}\right)  \subseteq \mathcal{A}/\left(  \mathcal{A}%
\cap \mathcal{J}_{i}\right)  $ and $\mathcal{B}/\mathcal{J}_{i}$ is centrally
prime for every $i\in I$,

\item $\mathcal{A=C}^{\ast}\left(  \mathcal{A}_{0}\cup \mathcal{Z}\left(
\mathcal{B}\right)  \right)  $ where $\mathcal{A}_{0}$ is an AF algebra and
each $\mathcal{J}_{i}$ is a primitive ideal,

\item $\mathcal{A=C}^{\ast}\left(  \mathcal{A}_{0}\cup \mathcal{Z}\left(
\mathcal{B}\right)  \right)  $ where $\mathcal{A}_{0}$ is an AH algebra and
each $\mathcal{B}/\mathcal{J}_{i}$ is a von Neumann algebra,

\item $\mathcal{Z}\left(  \mathcal{B}\right)  \subseteq \mathcal{A}$ and each
$\mathcal{B}/\mathcal{J}_{i}$ is finite-dimensional
\end{enumerate}

then $\mathcal{A}$ is approximately normal in $\mathcal{B}$.\bigskip
\end{corollary}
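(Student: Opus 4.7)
The plan is to handle each of the four replacement cases by showing that the hypotheses verify condition $(2)$ of Theorem \ref{SWapp} automatically and that the analogue of condition $(3)$ used in the proof of Theorem \ref{SWapp} can either be deduced from the new hypothesis or bypassed by a local adaptation of its argument. The broad template is: use one of the previously established approximate normality results to pass from the replacement hypothesis to condition $(2)$, then reuse the factor-state Stone--Weierstrass machinery from the proof of Theorem \ref{SWapp} on the quotients.

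For condition $(2)$ I would match each case with a previously proved tool. In case $(1)$, the Corollary following Proposition \ref{Macapp} applied with $\mathcal{W}=\mathbb{C}$ to $\mathcal{A}/(\mathcal{A}\cap \mathcal{J}_i)\subseteq \mathcal{B}/\mathcal{J}_i$ yields approximate normality in every quotient. In case $(2)$, since $\mathcal{J}_i$ is primitive the quotient $\mathcal{B}/\mathcal{J}_i$ is a primitive C*-algebra and the image of the AF algebra $\mathcal{A}_0$ under the quotient is AF, so Theorem \ref{AF} gives $K_{an}(\mathcal{A}/(\mathcal{A}\cap \mathcal{J}_i),\mathcal{B}/\mathcal{J}_i)\le 1$; the image of $\mathcal{Z}(\mathcal{B})$ lands in $\mathcal{Z}(\mathcal{B}/\mathcal{J}_i)=\mathbb{C}1$, so adjoining the center is harmless. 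Case $(3)$ is parallel, with Theorem \ref{AH} replacing Theorem \ref{AF} since each $\mathcal{B}/\mathcal{J}_i$ is a von Neumann algebra. Case $(4)$ is the easy end: a unital C*-subalgebra of a finite-dimensional C*-algebra containing the center is a finite direct sum of full matrix algebras, which is normal (hence approximately normal) in the quotient by Proposition \ref{inj}.

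For the role of condition $(3)$ I would replay the proof of Theorem \ref{SWapp} up to the production of distinct factor states $\alpha\ne\beta$ on $\mathcal{B}$ agreeing on $\mathcal{A}$ but differing on some $S\in \mathrm{Appr}(\mathcal{A},\mathcal{B})''$, with primitive GNS kernels $\ker\pi_\alpha$ and $\ker\pi_\beta$. What is needed is the containment of some $\mathcal{J}_i$ in each of these kernels. In case $(2)$ this follows because every primitive ideal of $\mathcal{B}$ is among the $\mathcal{J}_i$ after refining the family to all primitive ideals (condition $(1)$ is unaffected since $\mathcal{A}_0$ is AF). In case $(4)$ the finite-dimensional quotient structure lets a factor state of $\mathcal{B}$ descend through a finite-codimension ideal and hence through some $\mathcal{J}_i$. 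Cases $(1)$ and $(3)$ use Longo's extension theorem and the specific structural property of $\mathcal{B}/\mathcal{J}_i$ (centrally prime, respectively von Neumann) to realize $\ker\pi_\alpha$ as a primitive ideal of $\mathcal{B}/\mathcal{J}_i$ for suitable $i$, using the bidual of the GNS representation to produce the factor quotient. Once $i,j$ are obtained, Cases 1 and 2 of the proof of Theorem \ref{SWapp} run verbatim and yield the contradiction $\alpha(S)=\beta(S)$.

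The main obstacle will be the primitive-ideal containment step, and case $(3)$ is likely to be the most delicate since a von Neumann algebra $\mathcal{B}/\mathcal{J}_i$ can itself have a rich primitive-ideal space, so one must argue carefully that a factor state of $\mathcal{B}$ really does factor through a quotient in the given family rather than through some finer primitive ideal not represented in $\{\mathcal{J}_i\}$; the expected route is to pass to the enveloping von Neumann algebra and use normality of extensions combined with the hypothesis that each $\mathcal{B}/\mathcal{J}_i$ is already a von Neumann algebra. Once this containment is established in each of the four cases, Theorem \ref{SWapp} gives the conclusion.
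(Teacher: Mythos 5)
Your verifications that the replacement hypotheses deliver condition $(2)$ of Theorem \ref{SWapp} quotient by quotient are the right core of the argument: item $(1)$ via the corollary to Proposition \ref{Macapp} with $\mathcal{W}=\mathbb{C}$, item $(2)$ via Theorem \ref{AF} (quotients of AF algebras are AF, and $\mathcal{Z}\left(\mathcal{B}\right)$ maps into $\mathbb{C}1$ since the quotient is primitive), item $(3)$ via Theorem \ref{AH} --- though there you still owe the check that $C^{\ast}\left(\rho_{i}\left(\mathcal{A}_{0}\right)\cup\rho_{i}\left(\mathcal{Z}\left(\mathcal{B}\right)\right)\right)$ is again AH, e.g.\ because it is a quotient of $\rho_{i}\left(\mathcal{A}_{0}\right)\otimes_{\mathrm{min}}C\left(X\right)$. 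In item $(4)$ your step is not justified as written: $\rho_{i}\left(\mathcal{A}\right)$ contains only $\rho_{i}\left(\mathcal{Z}\left(\mathcal{B}\right)\right)$, which can be a proper subalgebra of $\mathcal{Z}\left(\mathcal{B}/\mathcal{J}_{i}\right)$, and a unital C*-subalgebra of a finite-dimensional algebra that misses part of the center is never approximately normal in it; so the fiberwise reduction needs an extra argument (or an extra hypothesis on the centers of the quotients) in this case.

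The genuine gap is the second half of your proposal, where you try to deduce or bypass the primitive-ideal covering condition from the new hypotheses. None of the four items carries any covering information, and your repairs do not work: in case $(2)$, ``refining the family to all primitive ideals'' changes the family and forces you to re-verify conditions $(1)$ and $(2)$ for ideals where they can fail; in case $(4)$ there is no reason a factor state of $\mathcal{B}$ should annihilate any $\mathcal{J}_{i}$ at all; cases $(1)$ and $(3)$ are only a stated hope. Indeed, if the covering condition is genuinely dropped the statement becomes false: take $\mathcal{B}=C^{\ast}\left(S^{\ast}\oplus S\right)$ and $\mathcal{A}=C^{\ast}\left(A\oplus A,1\right)$ as in the paper's Example, and let the family consist of the single primitive ideal $\mathcal{J}_{1}=0\oplus\mathcal{K}\left(\ell^{2}\right)$. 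Then condition $(1)$ is vacuous, condition $(2)$ holds because the image of $\mathcal{A}$ is a unital AF subalgebra of the Toeplitz quotient (Theorem \ref{AF}), and the hypotheses of items $(1)$ and $(2)$ are satisfied, yet $Appr\left(\mathcal{A},\mathcal{B}\right)^{\prime\prime}=C^{\ast}\left(A\right)\oplus C^{\ast}\left(A\right)\neq\mathcal{A}$. So the covering hypothesis linking $\left\{\mathcal{J}_{i}\right\}$ to all primitive ideals must be retained, and the corollary should be read as replacing the abstract hypothesis that the quotient images are approximately normal by the listed checkable structural conditions; with that reading, the first part of your proposal (suitably patched in cases $(3)$ and $(4)$) is essentially the whole proof, and the primitive-ideal containment step is an assumption, not something to be derived.
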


\bigskip

\begin{corollary}
Suppose $\mathcal{D}$ is a separable unital commutative C*-algebra and
$\mathcal{W}$ is a unital C*-algebra, and $\mathcal{A}_{0}$ is a C*-subalgebra
of $\mathcal{B}=\mathcal{D}\otimes \mathcal{W}$. If any one of the following holds,

\begin{enumerate}
\item $\mathcal{A}_{0}$ is commutative and $\mathcal{W}$ is centrally prime,

\item $\mathcal{A}_{0}$ is AF and $\mathcal{W}$ is primitive,

\item $\mathcal{A}_{0}$ is AH and $\mathcal{W}$ is a von Neumann algebra,
\end{enumerate}

then%
\[
Appr\left(  \mathcal{A}_{0},\mathcal{B}\right)  ^{\prime \prime}=C^{\ast
}\left(  \mathcal{A}_{0}\cup \mathcal{Z}\left(  \mathcal{B}\right)  \right)  .
\]

\end{corollary}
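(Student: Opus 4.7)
The plan is to reduce to the Corollary of Theorem \ref{SWapp} after exploiting the tensor structure of $\mathcal{B}$. Since $\mathcal{D}$ is a separable unital commutative C*-algebra, write $\mathcal{D}=C(X)$ for a compact metric space $X$, so that $\mathcal{B}=\mathcal{D}\otimes \mathcal{W}$ may be identified with $C(X,\mathcal{W})$. For each $x\in X$, the evaluation $\rho_{x}:\mathcal{B}\rightarrow \mathcal{W}$, $F\mapsto F(x)$, is a surjective unital $\ast$-homomorphism with kernel the closed two-sided ideal $\mathcal{J}_{x}=\{F\in \mathcal{B}:F(x)=0\}$, and $\mathcal{B}/\mathcal{J}_{x}\cong \mathcal{W}$. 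Set $\mathcal{A}=C^{\ast}(\mathcal{A}_{0}\cup \mathcal{Z}(\mathcal{B}))$. Since $Appr(\mathcal{A}_{0},\mathcal{B})^{\prime\prime}$ is a norm-closed unital C*-algebra that always contains both $\mathcal{A}_{0}$ and $\mathcal{Z}(\mathcal{B})$, it automatically contains $\mathcal{A}$. For the reverse inclusion, note that $\mathcal{A}_{0}\subseteq \mathcal{A}$ forces $Appr(\mathcal{A}_{0},\mathcal{B})^{\prime\prime}\subseteq Appr(\mathcal{A},\mathcal{B})^{\prime\prime}$ (every bounded net asymptotically commuting with $\mathcal{A}$ asymptotically commutes with $\mathcal{A}_{0}$), so it suffices to prove that $\mathcal{A}$ is approximately normal in $\mathcal{B}$.

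To apply the relevant part of the Corollary of Theorem \ref{SWapp} with the family $\{\mathcal{J}_{x}:x\in X\}$, I would first verify its two structural hypotheses. For condition (1), given $x\neq y$ in $X$, Urysohn's lemma yields $\phi \in C(X)$ with $\phi(x)=0$ and $\phi(y)=1$; since $\mathcal{Z}(\mathcal{B})=\mathcal{D}\otimes \mathcal{Z}(\mathcal{W})$ contains $\mathcal{D}\otimes 1$, the element $\phi \otimes 1$ lies in $\mathcal{A}$, and for every $A\in \mathcal{A}$ the decomposition $A=\phi A+(1-\phi)A$ exhibits $\phi A\in \mathcal{A}\cap \mathcal{J}_{x}$ and $(1-\phi)A\in \mathcal{A}\cap \mathcal{J}_{y}$, so $(\mathcal{A}\cap \mathcal{J}_{x})+(\mathcal{A}\cap \mathcal{J}_{y})=\mathcal{A}$. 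For condition (2), identify $\mathcal{A}/(\mathcal{A}\cap \mathcal{J}_{x})$ with $\rho_{x}(\mathcal{A})=C^{\ast}(\rho_{x}(\mathcal{A}_{0})\cup \mathcal{Z}(\mathcal{W}))$ sitting inside $\mathcal{W}$ (using $\rho_{x}(\mathcal{Z}(\mathcal{B}))=\mathcal{Z}(\mathcal{W})$), and then argue its approximate normality case by case.

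For the case analysis, invoke the appropriate replacement of condition (3) from the Corollary of Theorem \ref{SWapp}. In case (1), $\rho_{x}(\mathcal{A}_{0})$ is commutative, hence $\rho_{x}(\mathcal{A})$ is a commutative unital C*-subalgebra of the centrally prime algebra $\mathcal{W}$ containing $\mathcal{Z}(\mathcal{W})$, and the Corollary following Proposition \ref{Macapp} gives the required approximate normality; the replacement used is (1) of the Corollary of Theorem \ref{SWapp}. In case (2), primitivity of $\mathcal{W}$ forces $\mathcal{Z}(\mathcal{W})=\mathbb{C}$, so $\rho_{x}(\mathcal{A})=\rho_{x}(\mathcal{A}_{0})$ is an AF subalgebra of the primitive $\mathcal{W}$, each $\mathcal{J}_{x}$ is a primitive ideal, and Theorem \ref{AF} provides $K_{an}\leq 1$ and thus approximate normality; the replacement used is (2). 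In case (3), $\mathcal{W}$ is a von Neumann algebra and $\rho_{x}(\mathcal{A}_{0})$ is AH, so Theorem \ref{AH}, applied to an AH direct system for $\rho_{x}(\mathcal{A})$ formed by adjoining $\mathcal{Z}(\mathcal{W})$ at each stage, yields $K_{an}\leq 4$; the replacement used is (3). I expect the only delicate point to be this last case: one must realize $\rho_{x}(\mathcal{A})=C^{\ast}(\rho_{x}(\mathcal{A}_{0})\cup \mathcal{Z}(\mathcal{W}))$ itself as an AH direct limit of subalgebras each of which contains $\mathcal{Z}(\mathcal{W})$, so that Lemma \ref{andl} combined with Theorem \ref{smallAH} produces approximate normality of the full $\rho_{x}(\mathcal{A})$ rather than only of $\rho_{x}(\mathcal{A}_{0})$.
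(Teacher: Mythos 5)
Your proposal is correct and follows essentially the same route the paper intends: identify $\mathcal{B}=\mathcal{D}\otimes\mathcal{W}$ with $C\left(X,\mathcal{W}\right)$, take the point-evaluation ideals $\mathcal{J}_{x}$, reduce to the approximate normality of $\mathcal{A}=C^{\ast}\left(\mathcal{A}_{0}\cup\mathcal{Z}\left(\mathcal{B}\right)\right)$, and verify the hypotheses of (the Corollary to) Theorem \ref{SWapp} case by case using the Corollary to Proposition \ref{Macapp}, Theorem \ref{AF}, and Theorem \ref{smallAH} with Lemma \ref{andl}. Your handling of the AH case (adjoining $\mathcal{Z}\left(\mathcal{W}\right)$ at each finite stage so that Theorem \ref{smallAH} and Lemma \ref{andl} apply to $C^{\ast}\left(\rho_{x}\left(\mathcal{A}_{0}\right)\cup\mathcal{Z}\left(\mathcal{W}\right)\right)$) is exactly the intended resolution of what the paper calls the trick of guaranteeing condition (2).
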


\bigskip

\section{C*-algebraic Bishop-Stone-Weierstrass and Nonselfadjoint Subalgebras
\bigskip}

In this section we prove a modest result that applies to commutative
nonselfadjoint subalgebras. The proof relies on the first author's version of
the Bishop-Stone-Weierstrass theorem for C*-algebras \cite{H3}. Suppose
$\mathcal{A}$ is a unital closed (not necessarily selfadjoint) subalgebra of a
unital C*-algebra $\mathcal{B}$. A set $\mathcal{E}$ of states on
$\mathcal{B}$ is called $\mathcal{A}$\emph{-antisymmetric} if whenever
$a\in \mathcal{A}$ and $a|_{\mathcal{E}}$ is real (i.e., $\varphi \left(
a\right)  \in \mathbb{R}$ for all $\varphi$ in $\mathcal{E}$), we have
$a|_{\mathcal{E}}$ is constant. Here is the first author's
Bishop-Stone-Weierstrass theorem for C*-algebras \cite{H3}.

\begin{theorem}
\label{BSW} \cite{H3} Suppose $\mathcal{A}$ is a separable commutative unital
closed subalgebra of a unital C*-algebra $\mathcal{B}$ and $b\in \mathcal{B}$
and suppose for every $\mathcal{A}$-antisymmetric set of pure states on
$\mathcal{B}$ there is an $a\in \mathcal{A}$ such that $b|_{E}=a|_{E}$. Then
$b\in \mathcal{A}$.
\end{theorem}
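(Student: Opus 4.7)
The strategy is Hahn-Banach plus an extremality argument, modelled on the classical proof of Bishop's theorem. Assume for contradiction that $b \notin \mathcal{A}$. By Hahn-Banach there is $\varphi \in \mathcal{B}^{\ast}$ with $\varphi|_{\mathcal{A}} = 0$ and $\varphi(b) = 1$. The convex weak-$\ast$ compact set
\[
K \;=\; \bigl\{ \psi \in \mathcal{B}^{\ast} : \psi|_{\mathcal{A}} = 0,\; \psi(b) = 1,\; \|\psi\| \leq \|\varphi\| \bigr\}
\]
is nonempty, so by Krein-Milman I may replace $\varphi$ by an extreme point of $K$.

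Next, I extract a canonical set $E$ of pure states from $\varphi$. Write the polar decomposition $\varphi(\,\cdot\,) = |\varphi|(v\,\cdot\,)$ in the enveloping von Neumann algebra $\mathcal{B}^{\ast\ast}$, with $|\varphi|$ a positive functional and $v \in \mathcal{B}^{\ast\ast}$ a partial isometry. Since $\mathcal{A}$ is separable, the state space of the separable subalgebra $C^{\ast}(\mathcal{A} \cup \{b\})$ is metrizable, so Choquet theory yields a Borel probability measure $\mu$ on the pure state space $\mathcal{P}(\mathcal{B})$ which represents the normalized restriction of $|\varphi|$ to this subalgebra. Let $E = \operatorname{supp}(\mu) \subseteq \mathcal{P}(\mathcal{B})$.

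The core step is to show that $E$ is $\mathcal{A}$-antisymmetric. Suppose $a \in \mathcal{A}$ satisfies $\omega(a) \in \mathbb{R}$ for every $\omega \in E$. If $\omega \mapsto \omega(a)$ were nonconstant on $E$, a spectral cut of $a$ partitions $\mu$ as $\mu = t\mu_{1} + (1-t)\mu_{2}$ with disjoint supports and $0 < t < 1$. Lifting this decomposition through a spectral projection of $a$ inside $\pi_{|\varphi|}(\mathcal{A})''$ and then pairing with $v$ produces $\varphi = t\psi_{1} + (1-t)\psi_{2}$ with $\psi_{1}, \psi_{2}$ distinct elements of $K$: the fact that $\mathcal{A}$ is an algebra is exactly what keeps each summand in $\mathcal{A}^{\perp}$, and a rescaling ensures $\psi_{j}(b) = 1$ and $\|\psi_{j}\| \leq \|\varphi\|$. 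This contradicts the extremality of $\varphi$, so $\omega(a)$ is constant on $E$. Applying the hypothesis, there exists $a_{0} \in \mathcal{A}$ with $b|_{E} = a_{0}|_{E}$; then $\omega(b - a_{0}) = 0$ for every $\omega \in E$, so integrating against $\mu$ and pairing with $v$ gives $\varphi(b - a_{0}) = 0$, whence $\varphi(b) = \varphi(a_{0}) = 0$, contradicting $\varphi(b) = 1$.

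The main obstacle is the antisymmetry step: bridging the noncommutative polar decomposition on one side and the commutative von Neumann algebra $\pi_{|\varphi|}(\mathcal{A})''$ on the other, so that a spectral splitting of a self-adjoint $a \in \mathcal{A}$ lifts to an honest convex decomposition of $\varphi$ inside $K$. Both the commutativity of $\mathcal{A}$ (to make $\pi_{|\varphi|}(\mathcal{A})''$ abelian and to keep products $\psi \cdot a$ in $\mathcal{A}^{\perp}$) and its separability (to guarantee a boundary measure $\mu$ concentrated on pure states rather than merely a maximal measure on the state space) play essential roles.
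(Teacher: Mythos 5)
There is a genuine gap, and it sits exactly where you flag ``the main obstacle'': the antisymmetry step is the entire content of the theorem, and your sketch of it does not work. (Note also that the paper does not prove this statement at all --- it is quoted from \cite{H3}, whose proof uses quite different machinery; you are attempting to transplant the classical de Branges--Bishop extreme-point argument, which is known not to carry over naively to C*-algebras.) Concretely: after writing $\varphi(\,\cdot\,)=|\varphi|(v\,\cdot\,)$ and taking a spectral projection $p$ of $a$ in $\pi_{|\varphi|}(\mathcal{A})''$, the functionals obtained by inserting $p$, such as $x\mapsto|\varphi|(vpx)$, have no reason to annihilate $\mathcal{A}$: in the commutative proof $g\mu\in\mathcal{A}^{\perp}$ precisely because $g\mathcal{A}\subseteq\mathcal{A}$ and everything commutes, whereas here $p$ does not commute past $v$ or past $\mathcal{A}$ inside $|\varphi|$. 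If instead you multiply on the right by elements of $\mathcal{A}$ (which does preserve $\mathcal{A}^{\perp}$), you lose the norm additivity $\|\psi_{1}\|+\|\psi_{2}\|\leq\|\varphi\|$ that the extreme-point contradiction needs, and the proposed ``rescaling'' to force $\psi_{j}(b)=1$ destroys the convex recombination $\varphi=t\psi_{1}+(1-t)\psi_{2}$. This failure of norm additivity and of $\mathcal{A}^{\perp}$-stability is exactly the noncommutative difficulty that makes the Stone--Weierstrass and Bishop--Stone--Weierstrass problems for C*-algebras hard, and it is why \cite{H3} (like the factorial-state results of Longo and Popa used elsewhere in this paper) proceeds by genuinely different means rather than by decomposing an extreme annihilating functional.

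There are two further gaps even granting that step. First, your Choquet measure $\mu$ represents $|\varphi|$ only on the separable subalgebra $C^{\ast}(\mathcal{A}\cup\{b\})$ and lives on states of that subalgebra; its closed support need not consist of pure states (the pure state space is generally not closed, only pseudo-supportedness holds), and pure states of the subalgebra are not pure states of $\mathcal{B}$, while the hypothesis you must invoke requires an $\mathcal{A}$-antisymmetric set of pure states \emph{of} $\mathcal{B}$. Second, the closing contradiction does not follow: knowing $\omega(b-a_{0})=0$ for all $\omega\in E$ yields at best $|\varphi|(b-a_{0})=0$, but what you need is $\varphi(b-a_{0})=|\varphi|\bigl(v(b-a_{0})\bigr)=0$, and $v(b-a_{0})$ lies outside the subalgebra on which $\mu$ represents $|\varphi|$, so the representing measure gives no information about it. As it stands the argument establishes neither the antisymmetry of $E$ nor the final vanishing, so it is a plausible-looking outline of the commutative proof rather than a proof of the C*-algebraic theorem.
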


\bigskip

\begin{theorem}
\label{nsa}Suppose $\mathcal{B}$ is a unital separable C*-algebra
$\mathcal{A}$ is a unital commutative norm-closed subalgebra of $\mathcal{B}$
with $\mathcal{Z}\left(  \mathcal{B}\right)  \subseteq \mathcal{A}$. Suppose
also $\left \{  \mathcal{J}_{i}:i\in I\right \}  $ is a family of closed
two-sided ideals of $\mathcal{B}$ such that

\begin{enumerate}
\item If $i\neq j$ are in $I$, then%
\[
\left(  \mathcal{Z}\left(  \mathcal{B}\right)  \cap \mathcal{J}_{i}\right)
+\left(  \mathcal{Z}\left(  \mathcal{B}\right)  \cap \mathcal{J}_{j}\right)
=\mathcal{Z}\left(  \mathcal{B}\right)
\]

\item $\mathcal{A}/\left(  \mathcal{A}\cap \mathcal{J}_{i}\right)  $ is
approximately normal in $\mathcal{B}/\mathcal{J}_{i}$ for each $i\in I$.

\item If $\mathcal{J}$ is a primitive ideal in $\mathcal{B}$, then there is an
$i\in I$ such that $\mathcal{J}_{i}\subseteq \mathcal{J}$.
\end{enumerate}

Then $\mathcal{A}$ is approximately normal in $\mathcal{B}$.
\end{theorem}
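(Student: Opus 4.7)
The plan is to mirror the proof of Theorem \ref{SWapp}, substituting the Bishop-Stone-Weierstrass theorem (Theorem \ref{BSW}) for the factor-state Stone-Weierstrass theorem, since $\mathcal{A}$ is commutative but not assumed selfadjoint. Assume for contradiction that $T\in\mathrm{Appr}\left(\mathcal{A},\mathcal{B}\right)''\setminus\mathcal{A}$. Since $\mathcal{B}$ is separable, so is the closed subalgebra $\mathcal{A}$, and Theorem \ref{BSW} furnishes an $\mathcal{A}$-antisymmetric set $E$ of pure states on $\mathcal{B}$ such that no $a\in\mathcal{A}$ satisfies $T|_E=a|_E$.

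The first observation is that all states in $E$ restrict to a common character $\chi$ on $\mathcal{Z}(\mathcal{B})$. Indeed, $\mathcal{Z}(\mathcal{B})$ is selfadjoint and contained in $\mathcal{A}$, so every $z\in\mathcal{Z}(\mathcal{B})$ decomposes as $z_1+iz_2$ with $z_1,z_2$ selfadjoint elements of $\mathcal{A}$. Each $\varphi(z_k)$ is real, so $\mathcal{A}$-antisymmetry forces $\varphi\mapsto\varphi(z_k)$ to be constant on $E$; hence so is $\varphi\mapsto\varphi(z)$. For each $\varphi\in E$ the GNS representation $\pi_\varphi$ is irreducible, so $\ker\pi_\varphi$ is a primitive ideal, and condition $(3)$ gives an index $i(\varphi)\in I$ with $\mathcal{J}_{i(\varphi)}\subseteq\ker\pi_\varphi$.

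The main obstacle, and the one place where the center-specific form of condition $(1)$ is essential, is upgrading these individual indices to a single $i_0\in I$ that works for \emph{every} $\varphi\in E$. Fix $\varphi_0\in E$ and set $i_0=i(\varphi_0)$. Suppose some $\psi\in E$ admits $\mathcal{J}_j\subseteq\ker\pi_\psi$ with $j\neq i_0$. Condition $(1)$ would then produce $1=u+v$ with $u\in\mathcal{Z}(\mathcal{B})\cap\mathcal{J}_{i_0}$ and $v\in\mathcal{Z}(\mathcal{B})\cap\mathcal{J}_j$. Irreducibility gives $\pi_{\varphi_0}(u)=\chi(u)I$ and $\pi_\psi(v)=\chi(v)I$, so $\chi(u)=\varphi_0(u)=0$ and $\chi(v)=\psi(v)=0$, contradicting $\chi(1)=1$. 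Hence $\mathcal{J}_{i_0}\subseteq\ker\pi_\psi$ for every $\psi\in E$.

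To conclude, apply Theorem \ref{lift} to the quotient map $\rho_{i_0}\colon\mathcal{B}\to\mathcal{B}/\mathcal{J}_{i_0}$: we obtain $\rho_{i_0}(T)\in\mathrm{Appr}\left(\rho_{i_0}(\mathcal{A}),\mathcal{B}/\mathcal{J}_{i_0}\right)''$, and by condition $(2)$ this last set equals $\rho_{i_0}(\mathcal{A})$. Thus $T-a\in\mathcal{J}_{i_0}$ for some $a\in\mathcal{A}$, and since $\mathcal{J}_{i_0}\subseteq\ker\pi_\varphi$ for every $\varphi\in E$, we get $\varphi(T)=\varphi(a)$ throughout $E$, i.e., $T|_E=a|_E$. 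This contradicts the defining property of $E$, so $T\in\mathcal{A}$ and $\mathcal{A}$ is approximately normal in $\mathcal{B}$.
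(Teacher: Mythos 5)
Your proposal is correct and follows essentially the same route as the paper's proof: constancy of $\mathcal{Z}(\mathcal{B})$ on an $\mathcal{A}$-antisymmetric set $E$, condition $(1)$ forcing a single ideal $\mathcal{J}_{i_0}$ inside $\ker\pi_\varphi$ for all $\varphi\in E$, then Theorem \ref{lift} together with condition $(2)$ to produce $a\in\mathcal{A}$ with $T-a\in\mathcal{J}_{i_0}$, and finally Theorem \ref{BSW}. Your contrapositive framing (choosing one bad antisymmetric set) versus the paper's direct verification for every antisymmetric set is only a cosmetic difference.
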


\begin{proof}
Suppose $E$ is an $\mathcal{A}$-antisymmetric set of pure states on
$\mathcal{B}$. Since $\mathcal{Z}\left(  \mathcal{B}\right)  =\mathcal{Z}%
\left(  \mathcal{B}\right)  ^{\ast}\subseteq \mathcal{A}$, it follows that each
element of $\mathcal{Z}\left(  \mathcal{B}\right)  $ is constant on $E$.
Suppose, for $k=1,2,$ that $\alpha_{k}\in E$ with GNS representation $\pi_{k}$
and, by $\left(  3\right)  $, choose $i_{k}\in I$ so that $\mathcal{J}_{i_{k}%
}\subseteq \ker \pi_{k}$. If $i_{1}\neq i_{2}$, it follows from that there is an
$x\in \mathcal{Z}\left(  \mathcal{B}\right)  $ such that $x-1\in \mathcal{J}%
_{i_{1}}$ and $x\in \mathcal{J}_{i_{2}},$ which implies $\pi_{1}\left(
x\right)  =1$ and $\pi_{2}\left(  x\right)  =0$, contradicting $\alpha
_{1}\left(  x\right)  =\alpha_{2}\left(  x\right)  $. Hence there is an $i\in
I$ such that, for every $\alpha \in E$ with GNS representation $\pi$, we have
$\mathcal{J}_{i}\subseteq \ker \pi$. Let $\rho:\mathcal{B\rightarrow
B}/\mathcal{J}_{i}$ be the quotient map. We know from Theorem \ref{lift} that
$\rho \left(  T\right)  \in Appr\left(  \rho \left(  \mathcal{A}\right)
,\rho \left(  \mathcal{B}\right)  \right)  ^{\prime \prime}$. However, it
follows from $\left(  2\right)  $ that $Appr\left(  \rho \left(  \mathcal{A}%
\right)  ,\rho \left(  \mathcal{B}\right)  \right)  ^{\prime \prime}=\rho \left(
\mathcal{A}\right)  $. Hence there is an $A\in \mathcal{A}$ such that
$T-A\in \ker \rho=\mathcal{J}_{i}$. Hence, for every $\alpha \in E$,
$\alpha \left(  T\right)  =\alpha \left(  A\right)  $. It follows from Theorem
\ref{BSW} that $T\in \mathcal{A}$.
\end{proof}

One example of an algebra $\mathcal{B}$ with a family of ideals satisfying
$\left(  1\right)  $ and $\left(  3\right)  $ in Theorem \ref{nsa} is by
letting $\mathcal{B}=C\left(  X\right)  \otimes \mathcal{W=C}\left(
X,\mathcal{W}\right)  $ for some unital C*-algebra $\mathcal{W}$ and some
compact Hausdorff space $X$, and, for each $i\in X$, letting $\mathcal{J}%
_{i}=\left \{  f\in C\left(  X,\mathcal{W}\right)  :f\left(  i\right)
=0\right \}  $. The trick is guaranteeing condition $\left(  2\right)  $.

In \cite{Tur} T. Rolf Turner proved that if $T$ is an algebraic operator on a
Hilbert space $H$, then $\left(  \left \{  T\right \}  ,B\left(  H\right)
\right)  ^{\prime \prime}=\left \{  p\left(  T\right)  :p\in \mathbb{C}\left[
z\right]  \right \}  $. This leads to the first statement in the following lemma.

\bigskip

\begin{lemma}
\label{Little}Suppose $n\in \mathbb{N}$. Then

\begin{enumerate}
\item If $T\in M_{n}\left(  \mathbb{C}\right)  ,$ then the algebra of
polynomials in $T$ is normal.

\item If $n\geq2,$ the following are equivalent:

\begin{enumerate}
\item $n\in \left \{  2,3\right \}  $.

\item Every unital commutative subalgebra of $\mathcal{M}_{n}\left(
\mathbb{C}\right)  $ is normal.
\end{enumerate}
\end{enumerate}
\end{lemma}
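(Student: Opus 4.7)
The plan is to handle part (1) by a direct invocation of Turner's theorem cited in the paragraph preceding the lemma, and to handle part (2) by splitting on dimension while using Schur's classical bound that a commutative subalgebra of $\mathcal{M}_n(\mathbb{C})$ has dimension at most $\lfloor n^2/4 \rfloor + 1$. For part (1), every $T \in \mathcal{M}_n(\mathbb{C})$ is algebraic by Cayley--Hamilton, so Turner gives $\{T\}'' = \mathbb{C}[T]$; since $\mathbb{C}[T]$ and $\{T\}$ have the same commutant, this forces $\mathbb{C}[T]'' = \{T\}'' = \mathbb{C}[T]$, which is exactly the normality of the polynomial algebra in $T$.

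For part (2), direction (a)$\Rightarrow$(b), Schur's bound evaluates to $n$ for $n \in \{2,3\}$, so any commutative unital $\mathcal{A} \subseteq \mathcal{M}_n(\mathbb{C})$ has $\dim \mathcal{A} \leq n$. I would split into cases. If $\dim \mathcal{A} = 1$ then $\mathcal{A} = \mathbb{C} I$, trivially normal. If $\dim \mathcal{A} = 2$, picking a non-scalar $T \in \mathcal{A}$ and using closure under squaring gives $T^2 \in \mathbb{C} I + \mathbb{C} T$, so $T$ satisfies a degree-$2$ polynomial and $\mathcal{A} = \mathbb{C}[T]$, normal by~(1). If $\dim \mathcal{A} = 3$ (which only occurs when $n = 3$), then $\mathcal{A}$ saturates Schur's bound and is therefore maximal commutative; any $N \in (\mathcal{A},\mathcal{M}_n)'$ would generate with $\mathcal{A}$ a commutative overalgebra $\mathcal{A}[N]$, and maximality forces $N \in \mathcal{A}$, giving $(\mathcal{A},\mathcal{M}_n)' = \mathcal{A}$ and hence $(\mathcal{A},\mathcal{M}_n)'' = \mathcal{A}$.

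For direction (b)$\Rightarrow$(a), I would construct, for every $n \geq 4$, a non-normal commutative subalgebra. The core example lives in $\mathcal{M}_4 \cong \mathcal{M}_2(\mathcal{M}_2)$: take
\[
\mathcal{A}_0 = \left\{ \begin{pmatrix} \lambda I_2 & U \\ 0 & \lambda I_2 \end{pmatrix} : \lambda \in \mathbb{C},\; U \text{ upper triangular in } \mathcal{M}_2(\mathbb{C}) \right\},
\]
a $4$-dimensional commutative unital subalgebra (commutativity is immediate from the bilinear form $(\lambda,U),(\mu,V) \mapsto (\lambda\mu,\lambda V+\mu U)$). A direct block calculation of the commutant forces $N_{21} = 0$, $N_{11} = N_{22}$, and $UN_{22} = N_{11}U$ for every upper triangular $U \in \mathcal{M}_2$; because only scalars commute with every upper triangular $2 \times 2$ matrix, this pins $N_{11} = N_{22} \in \mathbb{C} I_2$ while leaving $N_{12}$ completely free. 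Hence
\[
\mathcal{A}_0' = \left\{ \begin{pmatrix} p I_2 & V \\ 0 & p I_2 \end{pmatrix} : p \in \mathbb{C},\; V \in \mathcal{M}_2(\mathbb{C}) \right\}
\]
is commutative, $5$-dimensional, and strictly contains $\mathcal{A}_0$. Running the same computation on $\mathcal{A}_0'$ (now with $V$ ranging over all of $\mathcal{M}_2$) yields $\mathcal{A}_0'' = \mathcal{A}_0'$, so $\mathcal{A}_0 \subsetneq \mathcal{A}_0''$ and $\mathcal{A}_0$ is not normal in $\mathcal{M}_4$. For $n > 4$, I would inflate to $\mathcal{A}_0 \oplus \mathbb{C} I_{n-4}$ inside $\mathcal{M}_n(\mathbb{C})$; a routine block argument shows its commutant is $\mathcal{A}_0' \oplus \mathcal{M}_{n-4}$ and its bicommutant is $\mathcal{A}_0'' \oplus \mathbb{C} I_{n-4}$, still properly larger than $\mathcal{A}_0 \oplus \mathbb{C} I_{n-4}$.

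The hardest part will be verifying the counterexample in (b)$\Rightarrow$(a): the delicate point is that $\mathcal{A}_0'$ happens to be commutative (because the top-right block $N_{12}$ is entirely unconstrained) and that the relation $UN_{22} = N_{11}U$ for upper triangular $U$ pins $N_{11},N_{22}$ only down to scalars rather than to $\mathcal{A}_0$ itself, so $\mathcal{A}_0''$ is forced to strictly contain $\mathcal{A}_0$. The other steps are essentially bookkeeping once Turner's theorem and Schur's dimension bound have been invoked.
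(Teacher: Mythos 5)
Your proposal is correct, but it reaches part (2) by a genuinely different route than the paper. For (a)$\Rightarrow$(b) the paper argues structurally: for $n=2$ it triangularizes via Wedderburn to get $\dim\mathcal{A}\leq 2$ and reduces to part (1), and for $n=3$ it runs a case analysis on idempotents and on the nilpotent part $\mathcal{N}$ (order-$3$ nilpotents, $\dim\mathcal{N}=1$, or $\dim\mathcal{N}=2$ forcing rank-one structure and the explicit self-commutant algebras $\mathcal{N}_1,\mathcal{N}_2$). You instead invoke Schur's bound $\lfloor n^2/4\rfloor+1$ on the dimension of a commutative subalgebra, which collapses everything to three cases; your observation that a $3$-dimensional commutative subalgebra of $\mathcal{M}_3(\mathbb{C})$ saturates the bound, hence is maximal commutative, hence satisfies $\left(\mathcal{A},\mathcal{M}_3\right)'=\mathcal{A}$ and so is normal, is a clean shortcut (and your $\dim=2$ and maximality arguments are sound). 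The trade-off is that you use Schur's theorem as a black box, while the paper's longer argument is elementary and self-contained and incidentally identifies the extremal algebras. For (b)$\Rightarrow$(a) the paper only records, after the proof, a single $4\times 4$ counterexample (scalar diagonal with a trace-zero corner block); your counterexample (scalar diagonal with an upper-triangular corner block) works just as well — the commutant computation you sketch is correct, giving the $5$-dimensional algebra with arbitrary corner block as bicommutant — and your inflation $\mathcal{A}_0\oplus\mathbb{C}I_{n-4}$, whose commutant is $\mathcal{A}_0'\oplus\mathcal{M}_{n-4}$ and bicommutant $\mathcal{A}_0''\oplus\mathbb{C}I_{n-4}$, explicitly covers all $n\geq 4$, a point the paper leaves implicit.
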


\begin{proof}
$\left(  1\right)  .$ This follows from Turner's result \cite{H4}.

$\left(  2\right)  .$ $\left(  a\right)  \Longrightarrow \left(  b\right)  $.
First suppose $n=2$. It follows from Wedderburn's theorem that any commutative
algebra $\mathcal{A}\subseteq \mathcal{M}_{2}\left(  \mathbb{C}\right)  $ is
upper triangular with respect to some basis for $\mathbb{C}^{2}$; whence
$\dim \mathcal{A}$ is at most $2$. This means that there is a $T\in
\mathcal{M}_{2}\left(  \mathbb{C}\right)  $ such that $\mathcal{A}$ is the set
of polynomials in $T$; whence, by $\left(  1\right)  $ above, $\mathcal{A}$ is normal..

Next suppose $n=3$ and $\mathcal{A}$ is a commutative unital subalgebra of
$\mathcal{M}_{3}\left(  \mathbb{C}\right)  $. If $\mathcal{A}$ contains a
nontrivial idempotent, then $\mathcal{A}$ is the direct sum of a subalgebra of
$\mathcal{M}_{2}\left(  \mathbb{C}\right)  $ and $\mathcal{M}_{1}\left(
\mathbb{C}\right)  $, and the desired conclusion follows from the case $n=2$.
If $\mathcal{A}$ contains no nontrivial idempotents, then every element of
$\mathcal{A}$ is the sum of a nilpotent and a scalar multiple of the identity.
Since the algebra generated by a $3\times3$ nilpotent of order $3$ is maximal
abelian, the desired conclusion follows from $\left(  1\right)  $ above
whenever $\mathcal{A}$ contains a nilpotent of order $3$. If the subalgebra
$\mathcal{N}$ of nilpotents in $\mathcal{A}$ has dimension $1$ then the
desired conclusion follows from $\left(  1\right)  $. Since $\mathcal{N}$ is
commutative and is unitarily equivalent to a subalgebra of the strictly
upper-triangular $3\times3$ matrices, we conclude that $\dim \mathcal{N}=2$.
Moreover, every nonzero element of $\mathcal{N}$ is a nilpotent of order $2,$
and therefore has rank $1$. A linear space of rank-one operators must have all
have the form $e\otimes x$ with $e$ fixed or with $x$ fixed and $\left \langle
e,x\right \rangle =0$ (see, for example, \cite[Lemma 4.2]{H4}). Here
\[
\left(  e\otimes x\right)  \left(  h\right)  =\left \langle h,x\right \rangle
e.
\]
Hence $\mathcal{N}$ is unitarily equivalent to
\[
\mathcal{N}_{1}=\left \{  \left(
\begin{array}
[c]{ccc}%
a & b & c\\
0 & a & 0\\
0 & 0 & a
\end{array}
\right)  :a,b,c\in \mathbb{C}\right \}
\]
or%
\[
\mathcal{N}_{2}=\left \{  \left(
\begin{array}
[c]{ccc}%
a & 0 & c\\
0 & a & b\\
0 & 0 & a
\end{array}
\right)  :a,b,c\in \mathbb{C}\right \}  ,
\]
and it is easily shown that $\mathcal{N}_{j}^{\prime}=\mathcal{N}_{j}$ for
$j=1,2$. Hence $\mathcal{N}$ is normal.
\end{proof}

The algebra $\mathcal{A}$ of $4\times4$ matrices of the form $\left(
\begin{array}
[c]{cc}%
\alpha,I_{2} & A\\
0 & \alpha I_{2}%
\end{array}
\right)  ,$ where $\alpha \in \mathbb{C}$ and $A\in \mathcal{M}_{2}\left(
\mathbb{C}\right)  $ and $trace\left(  A\right)  =0$, is commutative and not
normal, since $\left(  \mathcal{A},\mathcal{M}_{4}\left(  \mathbb{C}\right)
\right)  ^{\prime \prime}$ is the set of $4\times4$ matrices of the same form
without the restriction $trace\left(  A\right)  =0.$\bigskip

The following result is an immediate consequence of Theorem \ref{nsa} and
Lemma \ref{Little}.

\bigskip

\begin{theorem}
\label{small} Suppose $K$ is a compact metric space and $\mathcal{B}=C\left(
K\right)  \otimes \mathcal{M}_{n}\left(  \mathbb{C}\right)  $. Then

\begin{enumerate}
\item If $T\in \mathcal{B}$, then the norm closed algebra $\mathcal{A}$
generated by $\left \{  T\right \}  \cup \mathcal{Z}\left(  \mathcal{B}\right)  $
is approximately normal, i.e.,%
\[
Appr\left(  \left \{  T\right \}  ,\mathcal{B}\right)  ^{\prime \prime
}=\mathcal{A}\mathbf{.}%
\]

\item If $n=2$ or $n=3$, then every unital commutative closed subalgebra
$\mathcal{A}$ of $\mathcal{B}$ that contains $\mathcal{Z}\left(
\mathcal{B}\right)  $ is approximately normal, i.e., if $\mathcal{S\subseteq
B}$ is a commuting family, then $Appr\left(  \mathcal{S},\mathcal{B}\right)
^{\prime \prime}$ is the norm closed algebra generated by $\mathcal{S}%
\cup \mathcal{Z}\left(  \mathcal{B}\right)  $.
\end{enumerate}
\end{theorem}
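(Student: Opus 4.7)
The plan is to apply Theorem \ref{nsa} with the family of maximal ideals $\mathcal{J}_x = \{f \in \mathcal{B} : f(x) = 0\}$ indexed by $x \in K$. Under the identification $\mathcal{B} = C(K, \mathcal{M}_n(\mathbb{C}))$, each quotient $\mathcal{B}/\mathcal{J}_x$ is the fiber $\mathcal{M}_n(\mathbb{C})$, and the primitive ideal space of $\mathcal{B}$ is precisely $K$ with the $\mathcal{J}_x$'s as all of the primitive ideals. Hypothesis (3) of Theorem \ref{nsa} therefore holds (with equality), and hypothesis (1) is a routine partition-of-unity argument in $\mathcal{Z}(\mathcal{B}) = C(K) \otimes 1$: for $x \neq y$ choose by Urysohn a $g \in C(K)$ with $g(x)=0$ and $g(y)=1$, and write each $h \in C(K)$ as $h = hg + h(1-g)$ with summands in $\mathcal{Z}(\mathcal{B}) \cap \mathcal{J}_x$ and $\mathcal{Z}(\mathcal{B}) \cap \mathcal{J}_y$ respectively.

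The substantive step is verifying hypothesis (2), that the image $\mathcal{A}/(\mathcal{A}\cap \mathcal{J}_x)$ is approximately normal in $\mathcal{M}_n(\mathbb{C})$. Since $\mathcal{M}_n(\mathbb{C})$ is finite-dimensional, approximate normality coincides with ordinary normality (by compactness, any bounded net asymptotically commuting with a set has a cluster point that exactly commutes with the set). Thus it suffices to check that the image of $\mathcal{A}$ in $\mathcal{M}_n(\mathbb{C})$ is normal. For part (1), this image is the unital polynomial algebra generated by $T(x) \in \mathcal{M}_n(\mathbb{C})$, which is normal by Lemma \ref{Little}(1); for part (2) with $n \in \{2,3\}$, the image is an arbitrary unital commutative subalgebra of $\mathcal{M}_n(\mathbb{C})$, which is normal by Lemma \ref{Little}(2). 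This is the one place real content enters, and it is precisely the role the author set up Lemma \ref{Little} to play. Theorem \ref{nsa} then yields $Appr(\mathcal{A},\mathcal{B})'' = \mathcal{A}$.

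Finally, both parts of the theorem are phrased in terms of a generating set rather than $\mathcal{A}$ itself, so one small bookkeeping step remains. Let $\mathcal{G}$ denote either $\{T\}$ (in part (1)) or a commuting family $\mathcal{S}$ (in part (2)), and let $\mathcal{A}$ be the norm-closed unital algebra generated by $\mathcal{G} \cup \mathcal{Z}(\mathcal{B})$; this is unital, commutative, closed, and contains $\mathcal{Z}(\mathcal{B})$, so Theorem \ref{nsa} applies to it. Since $Appr(\mathcal{G},\mathcal{B})''$ is automatically a norm-closed unital algebra containing $\mathcal{Z}(\mathcal{B})$, it contains $\mathcal{A}$; conversely, any bounded net asymptotically commuting with $\mathcal{G}$ asymptotically commutes with every polynomial in $\mathcal{G}\cup \mathcal{Z}(\mathcal{B})$, and then with all of $\mathcal{A}$ by uniform approximation on bounded sets, so $Appr(\mathcal{G},\mathcal{B})'' \subseteq Appr(\mathcal{A},\mathcal{B})'' = \mathcal{A}$. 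I do not anticipate any genuine obstacle here; the result really is, as advertised, an immediate consequence of Theorem \ref{nsa} once the fiberwise normality supplied by Lemma \ref{Little} is in hand.
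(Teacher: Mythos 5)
Your proposal is correct and follows exactly the route the paper intends: Theorem \ref{nsa} applied with the point-evaluation ideals $\mathcal{J}_x=\{f\in C(K,\mathcal{M}_n(\mathbb{C})):f(x)=0\}$ (the example described right after Theorem \ref{nsa}), with hypothesis (2) supplied fiberwise by Lemma \ref{Little} since approximate normality and normality coincide in $\mathcal{M}_n(\mathbb{C})$. The only quibble is in the final bookkeeping, where your stated justification for $Appr(\mathcal{G},\mathcal{B})''\subseteq Appr(\mathcal{A},\mathcal{B})''$ actually proves the reverse comparison of admissible nets; the inclusion you need follows trivially from $\mathcal{G}\subseteq\mathcal{A}$ (monotonicity of the approximate double commutant), so the argument stands.
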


.

\section{Questions and Comments}

We conclude with a list of questions and comments.

\begin{enumerate}
\item If $\mathcal{B}$ is any unital C*-algebra, it is clear that
$\mathcal{Z}\left(  \mathcal{B}\right)  $ is normal. When is $\mathcal{Z}%
\left(  \mathcal{B}\right)  $ metric normal or metric approximately normal? It
is clear that for $T\in \mathcal{B}$, the inner derivation $\delta_{T}$ on
$\mathcal{B}$ defined by $\delta_{T}\left(  S\right)  =TS-ST$ extends to a
weak*-continuous operator on $\mathcal{B}^{\# \#}$, and since the closed unit
ball of $\mathcal{B}$ is weak*-dense in the closed unit ball of $\mathcal{B}%
^{\# \#}$, it follows that%
\[
\left \Vert \delta_{T}\right \Vert =\left \Vert \delta_{T}|_{\mathcal{B}^{\# \#}%
}\right \Vert =2dist\left(  T,\mathcal{Z}\left(  \mathcal{B}^{\# \#}\right)
\right)  .
\]
On the other hand $\left \Vert \delta_{T}\right \Vert $ is clearly equal to
$d_{n}\left(  T,\mathcal{Z}\left(  B\right)  ,\mathcal{B}\right)  $. Hence,
for every $T\in \mathcal{B}$,
\[
dist\left(  T,\mathcal{Z}\left(  \mathcal{B}\right)  \right)  \leq
2K_{n}\left(  \mathcal{Z}\left(  \mathcal{B}\right)  ,\mathcal{B}\right)
dist\left(  T,\mathcal{Z}\left(  \mathcal{B}^{\# \#}\right)  \right)  .
\]
The same argument applies if we replace $\mathcal{B}^{\# \#}$ with $\pi \left(
\mathcal{B}\right)  ^{\prime \prime}$, where $\pi:\mathcal{B}\rightarrow
B\left(  H\right)  $ is a faithful representation. This makes it easy to see
that if $\mathcal{B}$ is primitive, there is a faithful irreducible
representation $\pi$, so%
\[
d_{n}\left(  T,\mathcal{Z}\left(  B\right)  ,\mathcal{B}\right)  =\left \Vert
\delta_{\pi \left(  T\right)  }|_{\pi \left(  \mathcal{B}\right)  ^{\prime
\prime}}\right \Vert =2dist\left(  \pi \left(  T\right)  ,\mathcal{Z}\left(
\pi \left(  \mathcal{B}\right)  ^{\prime \prime}\right)  \right)  =
\]%
\[
2dist\left(  \pi \left(  T\right)  ,\mathbb{C}1\right)  =2dist\left(
T,\mathcal{Z}\left(  \mathcal{B}\right)  \right)  ,
\]
which implies $K_{n}\left(  \mathcal{Z}\left(  \mathcal{B}\right)
,\mathcal{B}\right)  =1/2$. It is not hard to show that $\mathcal{Z}\left(
\mathcal{B}\right)  $ is metric normal when $\mathcal{B}$ has a finite
separating family of irreducible representations. However, it is also true for
$\mathcal{M}_{2}\left(  C\left(  X\right)  \right)  $ when $X$ is compact
Hausdorff space.

\item For which unital C*-algebras is every masa metric normal or metric
approximately normal? In these algebras we know that every commutative unital
C*-algebra containing the center is metric approximately normal. Morover, if,
for a centrally prime algebra $\mathcal{B}$ there is an upper bound for the
$\mathcal{K}_{an}\left(  \mathcal{A},P\mathcal{B}P\right)  $ for all masas
$\mathcal{A\subseteq}P\mathcal{B}P$ with $P$ a projection in $\mathcal{B}$,
then it follows that every $AH$ C*-subalgebra of $\mathcal{B}$ containing
$\mathcal{Z}\left(  \mathcal{B}\right)  $ is metric approximately normal.

\item It was shown in Proposition \ref{ultra} that, if each $B_{i}$ is a von
Neumann algebra, then any commutative C*-subalgebra $\mathcal{A}$ containing
the center of $%
{\displaystyle \prod_{i\in I}}
\mathcal{B}_{i}/\sum_{i\in I}\mathcal{B}_{i}$ that lifts to a commutative
C*-subalgebra of $%
{\displaystyle \prod_{i\in I}}
\mathcal{B}_{i}$, is metric approximately normal. What about those commutative
C*-algebras $\mathcal{A}$ that do not lift? We see that the general problem
almost reduces to masas that do not lift.

Interesting special cases are when $\mathcal{A}$ is the C*-algebra generated
by a single normal element or two unitary elements or three selfadjoint
elements and $I=\mathbb{N}$. It was shownn by H. Lin \cite{Lin} that when each
$\mathcal{B}_{i}$ is finite-dimensional, then every normal element in $%
{\displaystyle \prod_{i\in I}}
\mathcal{B}_{i}/\sum_{i\in I}\mathcal{B}_{i}$ lifts to a normal element in $%
{\displaystyle \prod_{i\in I}}
\mathcal{B}_{i}$. P. Friis and M. R\o rdam \cite{FR} gave a simple proof of
Lin's result when each $\mathcal{B}_{i}$ is a finite von Neumann algebra. If
$I$ is infinite and $\mathcal{B}_{i}$ is an infinite von Neumann algebra for
infinitely many $i\in I$, then there is a normal element $T$ in $%
{\displaystyle \prod_{i\in I}}
\mathcal{B}_{i}/\sum_{i\in I}\mathcal{B}_{i}$ that does not lift to a normal
element of $%
{\displaystyle \prod_{i\in I}}
\mathcal{B}_{i}$. Indeed, if $S$ is a nonunitary isometry and
\[
T_{n}=\left[  S^{n}\left(  S^{\ast}\right)  ^{n}+%
{\displaystyle \sum_{k=1}^{n}}
\frac{k}{n}S^{K}\left(  1-SS^{\ast}\right)  \left(  S^{\ast}\right)
^{k}\right]  S,
\]
then $\left \Vert T_{n}T_{n}^{\ast}-T_{n}^{\ast}T_{n}\right \Vert \leq2/n$ and
the distance from $T_{n}$ to the normal operators is $1$. Is $C^{\ast}\left(
\left \{  T\right \}  \cup \mathcal{Z}\left(
{\displaystyle \prod_{i\in I}}
\mathcal{B}_{i}/\sum_{i\in I}\mathcal{B}_{i}\right)  \right)  $ normal or
approximately normal? What is a masa in $%
{\displaystyle \prod_{i\in I}}
\mathcal{B}_{i}/\sum_{i\in I}\mathcal{B}_{i}$ that contains $T$? There is a
similar example (see \cite{D}) when $I=\mathbb{N}$ and $\mathcal{B}%
_{n}=\mathcal{M}_{n}\left(  \mathbb{C}\right)  $ for each $n$. There is a
commuting family $\left \{  T_{1},T_{2},T_{3}\right \}  $ of selfadjoint
operators in $%
{\displaystyle \prod_{i\in I}}
\mathcal{B}_{i}/\sum_{i\in I}\mathcal{B}_{i}$ that does not lift to commuting
selfadjoints in $%
{\displaystyle \prod_{i\in I}}
\mathcal{B}_{i}$. There is also \cite{V} a commuting pair $U,V$ of unitaries $%
{\displaystyle \prod_{i\in I}}
\mathcal{B}_{i}/\sum_{i\in I}\mathcal{B}_{i}$ that do not lift to commuting
unitaries in $%
{\displaystyle \prod_{i\in I}}
\mathcal{B}_{i}$. Are the associated C*-algebras genereated by these families
and the center normal or approximately normal? What are the masas in $%
{\displaystyle \prod_{i\in I}}
\mathcal{B}_{i}/\sum_{i\in I}\mathcal{B}_{i}$ in this case?

\item Let $\mathbb{F}_{3}$ denote the free group with $3$ generators $u,v,w$.
Is $C^{\ast}\left(  u,v\right)  $ approximately normal in $C^{\ast}\left(
\mathbb{F}_{3}\right)  $? In $C_{r}^{\ast}\left(  \mathbb{F}_{3}\right)  $? In
$\mathcal{L}_{\mathbb{F}_{3}}$?\bigskip
\end{enumerate}

\bigskip

\end{document}